\newcommand{\beql}[1]{\begin{equation}\label{#1}}
\newcommand{\eeql}{\end{equation}}
\newcommand{\eqn}[1]{(\ref{#1})}
\newcommand{\R}{\mathbb{R}}
\newcommand{\pr}{\mathbb{P}}
\newcommand{\E}{\mathbb{E}}
\newcommand{\ci}{{\cal I}}
\newcommand{\cs}{{\cal S}}
\newcommand{\ck}{{\cal K}}
\newcommand{\cx}{{\cal X}}
\newcommand{\cm}{{\cal M}}
\newcommand{\cq}{{\cal Q}}
\newcommand{\Z}{\mathbb{Z}}
\newtheorem{thm}{Theorem}
\newtheorem{lem}[thm]{Lemma}
\newtheorem{prop}[thm]{Proposition}
\newtheorem{cor}[thm]{Corollary}
\newtheorem{assumption}[thm]{Assumption}
\newtheorem{definition}[thm]{Definition}
\newcommand{\hull}[1]{\langle{#1}\rangle}
\newcommand{\bk}{\boldsymbol{k}}
\newcommand{\be}{\boldsymbol{e}}
\newcommand{\bZero}{\boldsymbol{0}}
\newcommand{\bOne}{\boldsymbol{1}}
\newcommand{\bX}{\boldsymbol{X}}
\newcommand{\bx}{\boldsymbol{x}}
\newcommand{\bU}{\boldsymbol{U}}
\newcommand{\bu}{\boldsymbol{u}}
\newcommand{\bv}{\boldsymbol{v}}
\newcommand{\boldeta}{\boldsymbol{\eta}}
\newcommand{\bxi}{\boldsymbol{\xi}}
\newcommand{\veps}{\varepsilon}
\newcommand{\wt}{\widetilde}
\newcommand{\N}{\mathbb{N}}
\newcommand{\cl}{\mathcal{L}}
\newcommand{\ct}{\mathcal{T}}
\def\red#1{{\color{red}#1}}
\def\old#1{}
\begin{document}

\title{A large-scale service system with packing constraints:\\ Minimizing the number of occupied servers\\%(DRAFT)
%A large-scale service system with packing constraints:\\ Greedy algorithm with subliner safety stocks\\(DRAFT)
}

%\numberofauthors{2}
%              }

\author
{
Alexander L. Stolyar \\
Bell Labs, Alcatel-Lucent\\
600 Mountain Ave., 2C-322\\
Murray Hill, NJ 07974 \\
\texttt{stolyar@research.bell-labs.com}
\and
Yuan Zhong \\
University of California\\
465 Soda Hall, MC-1776\\
Berkeley, CA 94720\\
\texttt{zhyu4118@berkeley.edu}
}

\maketitle

\begin{abstract}
{We consider a large-scale service system model proposed in \cite{St2012}, which is}
motivated by the problem of efficient {placement} of virtual machines to physical host machines in 
a network cloud, so that the total number of occupied hosts is minimized. 
Customers of different types arrive to a system with an infinite number of servers. 
A server packing {\em configuration} is the vector $\bk = \{k_i\}$, 
where $k_i$ is the number of type-$i$ customers that the server ``contains''. 
Packing constraints are described by a fixed finite set of allowed configurations. 
Upon arrival, each customer is placed into a server immediately, 
subject to the packing constraints; 
the server can be idle or already serving other customers. 
%Service times of different customers are independent, 
%and a customer's mean service time depends on its type. 
After service completion, each customer leaves its server and the system.
\iffalse
There are {multiple types of input flows of customers, where} a customer{'s} mean service time {depends} on its type. 
There is {an} infinite number of servers. A server packing {\em configuration} is the vector $\bk=\{k_i\}$, where $k_i$ is the number 
of type-$i$ customers {that} the server ``contains''. Packing constraints 
{are described by} a fixed finite set of {allowed} configurations $\bk$.
Service times of different customers are independent;  
after a service completion, each customer leaves its server and the system.
Each arriving customer is placed for service immediately; it 
can be placed into a server already serving other customers 
(as long as packing constraints are not violated),
or into an idle server. 
\fi

It was shown in \cite{St2012} that a simple real-time algorithm, 
called {\em Greedy}, is asymptotically optimal in the sense of minimizing
$\sum_{\bk} X_{\bk}^{1+\alpha}$ in {the} stationary regime,
as the customer arrival rates grow to infinity. 
{(Here $\alpha > 0$, and $X_{\bk}$ denotes the number of servers with configuration $\bk$.)} 
In particular, 
when parameter $\alpha$ is small,
{\em Greedy} 
approximately solves the problem of minimizing $\sum_{\bk} X_{\bk}$, the number of occupied hosts.
In this paper we introduce the algorithm
called {\em Greedy with sublinear Safety Stocks (GSS)}, 
and show that 
it asymptotically solves the exact problem of minimizing $\sum_{\bk} X_{\bk}$. 
An important feature of the algorithm is that sublinear safety stocks of $X_{\bk}$ are created automatically -- when and where necessary --  
without having to determine {\em a priori} where they are required.
%automatically, there is no need to a priori determine where they are required.
{Moreover, we also provide a tight characterization of the rate of convergence to optimality under {\em GSS}.}
%  different from the convergence rate under Greedy.}
The {\em GSS} algorithm is as simple as {\em Greedy}, and uses no more system state information than {\em Greedy} does.

%\red{COM Need to say here and in intro that the results are more precise than just fluid scale optimality ENDCOM}

\end{abstract}

\category{}{Network Services}{Cloud Computing}
\category{}{Probability and Statistics}{Markov Processes, Queueing Theory, Stochastic Processes}
\category{}{Design and Analysis of Algorithms}{Approximation Algorithms Analysis, Packing and Covering Problems}

\terms{Algorithms, Performance, Theory}

\keywords{Multi-dimensional Bin Packing, Infinite-Server System, Markov Chain, Safety Stocks, Fluid Scale Optimality, Local Fluid Scaling}

\section{Introduction}
We consider a service system model \cite{St2012} motivated by 
the problem of efficient placement of virtual machines (VMs) 
to physical host machines (servers) in a data center (DC)  \cite{Gulati2012}. 
%Each server in the DC has limited amounts of heterogeneous resources, 
%such as CPU, memory, etc, and incoming VMs of different types will be placed into servers. 
%When in service, a VM occupies certain amounts of resources on its server. 
%After its service completion, a VM leaves the system, 
%freeing up corresponding resources. 
A {\em service policy} decides to which server each 
incoming VM will be placed. 
%For an overview of various aspects of resource allocation issues 
%that arise from VM placements in the context of cloud computing, 
%see \cite{Gulati2012}.
We are interested in service policies 
that minimize the total number of occupied servers in the system. 
It is further desirable that the policy be simple, so that 
placement decisions are made in real time, and depend only 
on the current system state, but not on system parameters. 
%We now proceed to highlight key features of our model 
%and the main contribution of this paper. 

Consider the following description of a DC. 
It consists of a number of servers. While servers may potentially 
have different characteristics, 
in this paper we assume that they are all the same. More specifically, 
let there be $N$ different types of resources (for example, type-$1$ resource 
can be CPU, type-$2$ resource can be memory, etc). 
For each $n \in \{1, 2, \ldots, N\}$, a server possesses amount $B_n > 0$ 
of type-$n$ resource. $I$ types of VMs arrive 
in a probabilistic fashion, and request services at the DC. 
Arriving VMs will be placed 
into the servers, occupying certain resources. 
More specifically, for $i \in \{1, 2, \ldots, I\}$, 
a type-$i$ VM requires 
amount $b_{i, n} > 0$ of type-$n$ resource during service, 
where $n \in \{1, 2, \ldots, N\}$. 
Once a VM completes its service, 
it departs the system, freeing up corresponding resources. 
We assume that service times of different VMs are independent. 

For each $i \in \{1, 2, \ldots, I\}$, 
let $k_i$ be the number of type-$i$ VMs that a server contains. 
Then the following {\em vector packing constraints} must be observed at all times. Namely, 
a server can contain $k_i$ type-$i$ VMs ($i \in \{1, 2, \ldots, I\}$) simultaneously 
if and only if 
\begin{equation}\label{eq:vec-packing}
\sum_{i} k_i b_{i,n} \le B_n,
\end{equation}
for each $n \in \{1, 2, \ldots, N\}$. In this case, the vector $\bk = (k_1, \ldots, k_I)$ 
is called a {\em server configuration}.

The model considered in this paper is similar to the DC described above, 
but different in the following two aspects. 
\begin{itemize}
\item[1.] While vector packing constraints (cf. Eq. \eqref{eq:vec-packing}) 
arise naturally in the context of VM placement, 
we make the more general assumption of 
so-called {\em monotone} packing constraints 
(cf. Section \ref{sec:model}) in our model. 
%COM REMOVED SENTENCE HERE
%The exact nature of vector packing is not essential for results in this paper to hold. 
%All our results hold under this general assumption. 
\item[2.] We consider a system with an infinite number of servers, 
where incoming VMs will be immediately placed into a server. 
For large-scale DCs, the number of servers is not a bottleneck, 
hence an infinite-server system reasonably approximates such DCs. 
\end{itemize}
We would also like to remark that 
an important assumption of our model is that the service requirement of a VM 
is not affected by potentially other VMs occupying the same server. 
This is a reasonable modeling assumption for multi-core servers, for example.

There can be different performance objectives of interest. 
For example, we may be interested in minimizing the total energy consumption \cite{Gulati2012}, 
or maximizing system throughput \cite{Maguluri2012}. 
In this paper, we are interested in minimizing the total number of 
occupied servers. These objectives are different but related. 
For example, by switching off idle servers, or keeping them in stand-by mode, 
we can reduce energy consumption by minimizing the number of occupied servers. 

In the main results of the paper, we introduce the policy called 
{\em Greedy with sublinear Safety Stocks ({\em GSS})}, 
and show that it asymptotically minimizes the total number of occupied servers in steady state, 
%\red{and at the fluid scale,} 
as the input flow rates of VMs grow to infinity. 
{{\em GSS} is a simple policy 
that makes placement decisions in real time, 
and based only on the current system state.}
Informally speaking, {\em GSS} places incoming VMs in a way that greedily minimizes 
a Lyapunov function, which asymptotically coincides with 
the total number of occupied servers. 
{\em GSS}  maintains non-empty safety stocks 
at every server configuration $\bk$ whenever $X_{\bk}$ becomes ``too small'',
so as to allow flexibility on VM placement. 
In other words, under {\em GSS}, there is a non-zero number of servers of every configuration, 
so that an incoming VM can potentially be placed into a server with any configuration. 
These safety stocks correspond to the discrepancy between the Lyapunov function 
and the total number of occupied servers, 
and grow ``sublinearly'' with the input flow rates. 
{We also provide a characterization 
of the rate of convergence to optimality 
under {\em GSS}, which is tighter than the conventional 
fluid-scale convergence rate.}

%\red{COM: similar remarks as in abstract ENDCOM}

\subsection{Related Works}
{In this section, we discuss related works, and put our results in perspective.}

The most closely related work is \cite{St2012}, 
where the model considered in this paper was proposed, 
and a related problem was studied. 
{In both this paper and \cite{St2012}, 
the asymptotic regime of interest 
is when the input flow rates grow to infinity, 
and the system is considered under the {\em fluid scaling}, 
i.e., when the system states are scaled down by 
the input flow rates.}
In \cite{St2012}, the problem of interest 
is minimizing $\sum_{\bk} X_{\bk}^{1+\alpha}$, 
where $\alpha > 0$, and $X_{\bk}$ is the number of 
occupied servers with configuration $\bk$. 
A simple policy called {\em Greedy} was introduced, 
which asymptotically minimizes the sum $\sum_{\bk} X_{\bk}^{1+\alpha}$, 
for any $\alpha > 0$, in the stationary regime. 
{Policies {\em Greedy} and {\em GSS} differ in two important aspects. 
First, they try to minimize different objectives -- $\sum_{\bk} X_{\bk}^{1+\alpha}$ ($\alpha > 0$) and $\sum_{\bk} X_{\bk}$, respectively.} 
When $\alpha > 0$ is small, {\em Greedy} approximately solves 
the problem of minimizing the total number of occupied servers 
$\sum_{\bk} X_{\bk}$, in the asymptotic regime 
where the input flow rates grow to infinity, 
{and at the fluid scale}. 
However, %COM EDITED 
if minimizing $\sum_{\bk} X_{\bk}$ is the ``true'' desired objective,
$\alpha > 0$ need to be chosen carefully, depending on the system scale (input flow
rates), which may be difficult to do. 
Therefore, we believe that asymptotically solving the exact problem of minimizing 
{$\sum_{\bk} X_{\bk}$} is %COM 
of substantial interest. 
Moreover, the policy {\em GSS} proposed in this paper is as simple as {\em Greedy}, 
and uses no more system state information than {\em Greedy} does. 
{Second, at a technical level, to prove the asymptotic optimality of {\em Greedy}, 
\cite{St2012} considered only the fluid scaling and the corresponding fluid limits. 
%which captures the limiting behavior of the system at the fluid scale, 
%when the input flow rates grow to infinity.
In this paper, to prove the asymptotic optimality of {\em GSS}, 
it is no longer sufficient to consider the fluid-scale system behavior alone; 
a {\em local fluid scaling} is also considered, needed to study
the dynamics of safety stocks.
In addition, this allows us to derive a tighter characterization of the rate of convergence to optimality under {\em GSS},
as opposed to the fluid-scale convergence shown in \cite{St2012} for {\em Greedy}.} 
\iffalse
It was shown in \cite{St2012} that under {\em Greedy}, 
the system's rate of convergence to optimality 
when the input flow rates grow to infinity, is the conventional fluid-level convergence rate. 
In contrast, the convergence rate under {\em GSS} is different, 
and involves considering the {\em local fluid scaling}.}
\fi

On a broader level, the model considered in this paper is related to the vast literature 
on classical stochastic bin packing problems. 
In a bin packing system, random-sized items arrive, 
and need to be placed into finite-sized bins. 
The items do not leave or move between bins, 
and a typical objective is to minimize the number of occupied bins. 
A packing problem is {\em one-dimensional} if sizes 
of the items and bins are captured by scalars, 
and {\em multi-dimensional} if they are captured by vectors. 
Problems with the %COM 
multi-dimensional packing constraints  \eqn{eq:vec-packing} are called {\em
vector packing}. For a good review of one-dimensional bin packing, 
see for example \cite{Csirik2006}, and see for example \cite{Bansal2009} 
for a recent review of multi-dimensional packing. 
In bin packing {\em service} systems, items (customers) arrive at random times 
to be placed into a bin (server), and leave after a random service time. 
The servers can process multiple customers as long as packing constraints 
are observed. Customers get queued, and a typical objective of a packing algorithm is 
to maximize system throughput. 
(See for example \cite{Gamarnik2004} for a review of this 
line of work.) Our model is similar to the latter systems, except there are multiple bins (servers) 
-- in fact, an infinite number in our case. 
Models of this type are more recent (see for example, \cite{Jiang2012, Maguluri2012}). 
\cite{Jiang2012} addresses a joint routing and VM placement problem, 
which in particular includes packing constraints. The approach of \cite{Jiang2012} 
resembles Markov Chain algorithms used in combinatorial optimization.
\cite{Maguluri2012} considers maximizing throughput of a queueing system 
with a finite number of bins (servers), where VMs can wait for service. 
Very recently, \cite{GR2012} has new results on the classical one-dimensional
online bin packing; it also contains heuristics and simulations for the corresponding system with item departures, which is a special case of our model.

{As mentioned earlier, we consider t}he asymptotic regime where the input flow rates scale up to infinity.
In this respect, our work is related to the (also vast) literature on queueing systems in the 
{\em many servers} regime. (See e.g. \cite{ST2010_04} for an overview. The name ``many servers''
reflects the fact that the %COM EDITED THE REST OF PARA 
average number of occupied servers scales up to infinity 
as well, 
linearly with the input {flow} rates.) 
%\red{\st{In particular, we study system behavior under \emph{fluid scaling}, namely the scaling of the  %system state down by the (large) total number of customers.}} 
However, packing constraints 
are not present in earlier works (prior to \cite{St2012}) on the many servers regime, to the best of our knowledge. 

{The idea of maintaining sublinear safety stocks to increase system flexibility, and hence avoid  ``resource'' starvation -- the approach taken by {\em GSS}, the policy proposed in this paper -- has also appeared in other works. For example, 
see \cite{Meyn2005} and the references therein for an overview. 
However, to the best of our knowledge, the following feature of {\em GSS} is novel, 
and has not appeared in algorithms proposed in earlier works.
Namely, {\em GSS} creates safety stocks {\em automatically}, %COM EDITED THE REST OF PARA
 in the sense 
that it does not require {\em a priori} knowledge of the subset of configurations
for which the sublinear safety stocks need to be maintained. As a result,
{\em GSS} does not require any {\em a priori} knowledge of the system parameters, 
because the safety stocks automatically adapt to parameter changes.
We remark that the policy {\em Greedy} proposed in \cite{St2012} 
also creates safety stocks, but they scale linearly with the input flow rates, whereas {\em GSS} creates sublinear safety stocks. }

{
Finally, an overview of some resource allocation issues 
that arise from VM placement in the context of cloud computing 
can be found in \cite{Gulati2012}.
}

\subsection{Organization}
The rest of the paper is organized as follows. 
In Section \ref{ssec:notation}, we introduce the notation and conventions 
adopted in this paper. The precise model and main results are 
described in Section \ref{sec:model-results}. The model is introduced 
in Section \ref{sec:model}. Here we describe 
two versions of the model, the closed and open system. 
In Section \ref{ssec:asymp-regime}, 
we describe the asymptotic regime of interest. 
The {\em GSS} policy is described 
in Section \ref{sec-GSS-definition}, and the main results, 
Theorems \ref{thm:main-closed} and \ref{th-main-res-open}, 
are stated in Section \ref{ssec:results}, for the closed and open system, 
respectively. Sections \ref{sec-closed} and \ref{sec-open} are devoted to 
proving Theorems \ref{thm:main-closed} and \ref{th-main-res-open}, respectively. 
A discussion of the results in this paper and some future directions 
is provided in Section \ref{sec-discussion}.

\subsection{Notation and Conventions}
\label{ssec:notation}

Let $\R$ be the set of real numbers, 
and let $\R_+$ be the set of nonnegative real numbers. 
%and let $\R_{++}$ be the set of positive real numbers. 
Let $\Z$ be the set of integers, 
let $\Z_+$ be the set of nonnegative integers, 
and let $\N$ be the set of natural numbers. 
$\R^n$ denotes the real vector space of dimension $n$, and 
$\R_+^n$ denotes the nonnegative orthant of $\R^n$. 
%and $\R_{++}^n$ the positive orthant of $\R^n$. 
$\Z^n$ and $\Z_+^n$ are similarly defined. 
We reserve bold letters for vectors, 
and plain letters for scalars and sets. 
For a scalar $x$, let $|x|$ denote its absolute value, 
and let $\lceil x\rceil$ denote the largest integer that does not exceed $x$.
For two scalars $x$ and $y$, let $x \wedge y = \min\{x, y\}$, 
and let $x \vee y = \max\{x, y\}$. 
For a vector $\bx = (x_i)_{i=1}^n \in \R^n$, 
let $\|\bx\|$ denote its $1$-norm, i.e., $\|\bx\| = \sum_{i=1}^n |x_i|$.
The distance from vector $\bx \in \R^n$ to a set $U \subset \R^n$ is denoted by 
$d(\bx,U)=\inf_{\bu\in U} \|\bx-\bu\|$. 
We use $\be_i$ to denote the $i$-th standard unit vector, 
with only the $i$th component being $1$, and all other components being $0$. 
For a set $\mathcal{N}$, let $\bOne_{\mathcal{N}}$ be the indicator function of $\mathcal{N}$. 
For a finite set $\mathcal{N}$, let $|\mathcal{N}|$ 
be its cardinality. 
For two sets $\mathcal{N}$ and $\mathcal{M}$, 
let $\mathcal{N} \backslash \mathcal{M}$ denote the 
set difference of $\mathcal{N}$ and $\mathcal{M}$, 
i.e., $\mathcal{N} \backslash \mathcal{M} = \{x\in \mathcal{N} : x \notin \mathcal{M}\}$. 
For a set $\mathcal{N} \subset \R^n$, 
let $\hull{\mathcal{N}}$ denote its convex hull, i.e., 
the set of all $\bx \in \R^n$ such that there exist 
$\gamma_1, \ldots, \gamma_m \in \R_+$ and  
$\bv_1, \ldots, \bv_m \in \mathcal{N}$ with 
$\bx = \sum_{j=1}^m \gamma_j \bv_j$ and 
$\sum_{j=1}^m \gamma_j = 1$.
Symbol $\to$ means ordinary convergence in $\R^n$, and $\implies$
denotes convergence in distribution of random variables taking values in $\R^n$,
equipped with the Borel $\sigma$-algebra. 
The abbreviation {\em w.p.1} means convergence {\em with probability 1}.
We often write $x(\cdot)$ to mean the function (or random process) $\{x(t),~t\ge 0\}$.
We write iff as a shorthand for ``if and only if'', i.o for ``infinitely often'', 
LHS for ``left-hand side'' and RHS for ``right-hand side''. 
{We also write WLOG for ``without loss of generality'', w.r.t for ``with respect to'', 
and u.o.c for ``uniformly on compact sets''.}

%COM ADDED 
Throughout this paper, if $x(\cdot)$ is a random process (which in most cases
will be Markov), we will denote by $x(\infty)$ its random state when the process
is in stationary regime; in other words, $x(\infty)$ is equal in distribution
to $x(t)$ (for any $t$) when $x(\cdot)$ is stationary. We use the terms {\em steady state} and {\em stationary regime} interchangeably. %COM WE CANNOT SAY THAT 
%$x(\infty)$ IS A ``STATIONARY DISTRIBUTION'' OF $x(t)$. I CORRECTED THIS
%EVERYWHERE WHERE COULD FIND. ENDCOM

\section{Model and Main Results}\label{sec:model-results}

\subsection{Infinite Server System with Packing Constraints}
\label{sec:model}
We consider the following infinite server system that evolves in continuous time. 
There are $I$ types of customers, indexed by $i \in \{1,2,\ldots,I\} \equiv \ci$, 
and an infinite number of homogeneous servers. 
A server can potentially serve more than one customer simultaneously. 
We use $\bk = (k_1, k_2, \ldots, k_I) \in \Z_+^I$, 
an $I$-dimensional vector with nonnegative integer components, 
to denote a \emph{server configuration}. 
The general packing constraints are captured by the finite set $\bar\ck \subset \Z_+^I$ 
of \emph{feasible server configurations}. 
Thus, a server can simultaneously serve $k_i$ customers of type $i$, $i \in \ci$, 
iff $\bk = (k_1, k_2, \ldots, k_I) \in \bar\ck$. From now on, we drop the word ``feasible'', 
and simply call $\bar\ck$ the set of server configurations. 

In this paper, we assume that the set $\bar\ck$ is \emph{monotone}. 
\begin{assumption}\label{asmp:monotone}
$\bar\ck$ is \emph{monotone} in the following sense. 
If $\bk \in \bar\ck$, and $\bk' \in \Z_+^I$ 
has $\bk' \leq \bk$ component-wise, 
then $\bk' \in \bar\ck$ as well.
\end{assumption}
A simple consequence of the monotonicity assumption is 
that $\bZero \in \bar\ck$. 
We now let $\ck = \bar\ck \backslash \{\bZero\}$ denote 
the set of non-zero server configurations. \\\\
{\bf Vector Packing is Monotone.} An important example 
of monotone packing is vector packing. 
Consider the vector packing constraints in \eqref{eq:vec-packing}. 
It is clear that if the server configuration 
$\bk = \{k_1, \ldots, k_I\}$ satisfies \eqref{eq:vec-packing}, 
and if $\bk' \le \bk$ component-wise, then $\bk'$ also satisfies \eqref{eq:vec-packing}. 
On the other hand, not all monotone packing is vector packing. 
For example, when $I = 2$, $\bar{\ck} = \{(0, 0), (0, 1), (0, 2), (1, 0), (2, 0)\}$ is monotone, but 
is not described by vector packing constraints. 
In the sequel, we will only assume monotone packing in our model, 
and all our results hold under this general setting.

To exclude triviality, we also assume that 
for all $i \in \ci$, $\be_i$ (the $i$-th standard unit vector) is an element of $\bar\ck$.

As discussed in the introduction, we make the following important assumption in this paper. 
We assume that simultaneous services do {\em not} affect the service distributions of individual customers; in other words, the service time of a customer is unaffected by 
whether or not there are other customers served simultaneously by the same server. 
Let us also remark that ideally, we would like to consider an open system, where 
each arriving customer is immediately placed for service in one of the servers, 
and leaves the system after service completion. 
However, we will first consider a ``closed'' version 
of this open system. The reason is twofold. 
First, the analysis of the closed system is a stepping stone 
to that of the open system, and illustrates the main ideas more clearly. 
Second, we will see shortly that the closed system can be used to model 
job migration in a cloud, and is therefore of independent interest. 

Denote by $X_{\bk}$ the number of servers with configuration $\bk \in \ck$. 
The system state is then the vector $\bX = \{X_{\bk}, ~\bk \in \ck\}$. 
By convention, $X_{\bZero} \equiv 0$ at all times.\\\\
{\bf Closed System.} 
Here we describe the ``closed'' version of the model. 
Let $r \in \N$ be given. Suppose that 
there are in total $r$ customers in the system, 
and no exogenous arrivals. 
For each $i \in \ci$, we suppose that 
there are $\rho_i r$ customers of type $i$ in the system 
at all times. This in particular implies that $\sum_{i \in \ci} \rho_i = 1$. 
It is convenient to index the system by $r$ its total number of customers, 
and we use $\bX^r = (X_{\bk}^r,~\bk \in \ck)$ to denote 
a system state. 
The system evolves as follows. Each customer is almost always in service, 
except at a discrete set of time instances, where it migrates from one server 
to another (possibly the same one){, subject to the packing constraints imposed by $\bar{\ck}$}.  
For a customer, the time between consecutive migrations is called its 
{\em service requirement}. 
Thus, one can alternatively think of a customer as departing the system 
after its service requirement, and then immediately arriving to the system, 
to be placed into a server. 
For each $i$, we assume that the service requirements of type-$i$ customers are i.i.d. 
exponential random variables with mean $1/\mu_i$, 
and that the service requirements are independent across different $i \in \ci$. 
A (Markovian) \emph{service policy} (``packing rule'') decides to which server a customer 
will be placed after its service requirement, based only on the current 
system state $\bX^r$. 
A service policy has to observe the packing constraints.
Under any well-defined service policy, the system state at time $t$, $\bX^r(t)$, 
is a continuous-time Markov chain on a finite state space.
\iffalse 
COM: CORRECT It is easily seen to be irreducible, 
and the process $\{\bX^r(t), ~t\ge 0\}$, has a unique stationary distribution.
\fi
{Hence, for each $r$,} the process $\{\bX^r(t), ~t\ge 0\}$ always has a stationary distribution.
\newline
\newline
{\bf Open System.} In the open system, customers of type $i$ arrive exogenously as an independent Poisson 
flow of rate $\lambda_i r$, where $\lambda_i$ is fixed and $r$ is a scaling parameter. Each arriving customer has to be placed for service immediately
in one of the servers, subject to the packing constraints imposed by $\bar{\ck}$. Service times of all customers are independent. Service time of
a type-$i$ customer is exponentially distributed with mean $1/\mu_i$. After a service completion, each customer leaves the system.
If we denote $\rho_i = \lambda_i/\mu_i$, then in steady state, the average number of type $i$ customers in the system is $\rho_i r$, and the average total number
of customers is $\sum_ i \rho_i r$. We assume, WLOG, that $\sum_ i \rho_i = 1$ -- this is equivalent to re-choosing the value of parameter $r$, 
if necessary. A (Markovian) \emph{service policy} (``packing rule'') in this case decides to which server an arriving customer 
will be placed, based only on the current system state. A service policy has to observe the packing constraints. 
{Similar to the closed system, %COM
%, and with an abuse of notation, 
we let $X_{\bk}^r(t)$ denote the number of servers with configuration $\bk$ at time $t$ 
in the $r$th system. However, for the policy that we will study,
$\bX^r(t) = (X_{\bk}^r(t))_{\bk \in \ck}$ 
will not be a Markov process.}
%constitute a complete Markovian description of the open system.}
We postpone the discussion of {a complete Markovian description 
of the system} and the existence 
of {the associated} stationary distribution to {Section \ref{sssec:gss-open}}.
\iffalse
COM: TOO EARLY TO TALK ABOUT THIS HERE\\
Under any well-defined service policy, the system state at time $t$, $\bX^r(t)$, 
is a continuous-time Markov chain on a countable state space. 
It is easily seen to be irreducible and positive recurrent (see Lemma COM below),
and therefore the process $\{\bX^r(t), ~t\ge 0\}$, has a unique stationary distribution.
\fi

%\paragraph{Key Difference between the Closed and Open System.}

\iffalse
\subsection{System Dynamics}
Any arriving customer can be ``added" to a server, 
under the configuration feasibility constraint, 
i.e., it can be added to any server whose configuration $\bk\in\bar\ck$ 
(before the addition) is such that $\bk+\be_i \in \ck$;  
and for each $i \in \ci$, when the service of a type-$i$ customer by the server in configuration $\bk$ is completed,
the customer leaves the system and the server's configuration changes to $\bk - \be_i$. 
\begin{itemize}
\item Service completion of a type-$i$ customer from a server with configuration $\bk$. 
$X_{\bk} := X_{\bk} - 1$, $X_{\bk - \be_i} := X_{\bk - \be_i} + 1$. 

\item Key decision to make is where to place an arrival. 

\item The following constraints are in place. 
An arrival can only be placed into a server with configuration $\bk$ 
if $\bk = \bZero$, or $X_{\bk} > 0$. 

\item Service requirement of a type-$i$ customer is an independent 
exponential random variable with parameter $\mu_i$, $i \in \ci$. 

\item In the open system, the arrivals of type-$i$ customers is 
an independent Poisson process of rate $\lambda_i$. 
\end{itemize}
\fi

\subsection{Asymptotic Regime}\label{ssec:asymp-regime}
\iffalse
COM: Here we define asymptotic regime, $r\to\infty$, etc,
without any reference to any specific
algorithm. We say that what we want is to minimize the number of occupied servers
in a stationary regime. 
Introduce fluid scaled process here. Define LP formally. Say, INFORMALLY, that by asymptotic optimality we mean that fluid-scaled state is close to LP opt. solution state.
ENDCOM

\subsection{Fluid Scaling}\label{ssec:fluid-scaling}
\fi
We are interested in finding a service policy that minimizes  
the total number of occupied servers in the stationary regime. 
The exact problem is intractable, so instead we consider asymptotically 
optimal service policies. 
For both the closed and open systems, the asymptotic regime of interest 
is when $r \rightarrow \infty$. Informally speaking, in this limit, 
the {\em fluid-scaled} system state satisfies a conservation law (cf. Eq. \eqref{eq:conservation}), 
and the best that a policy can do is solving a linear program, subject to this conservation law. 
We now describe the asymptotic regime in more detail. 

First, we defined the so-called {\em fluid scaling}. 
%Here we will not distinguish between the closed and open system, 
%as a generic system scaling will be considered. 
Recall that both the closed and open systems are indexed by $r$, 
and $\bX^r(t)$ is the vector that denotes the numbers of servers at time $t$, 
in the $r$th system. 
%For the closed system, recall that $r$ denotes the total number of customers in the system at all times. 
%We consider a countable sequence of closed systems, indexed by $r$, 
%so that $r \rightarrow \infty$. 
The {\em fluid scaled} process is $\bx^r(t)=\bX^r(t)/r$. 
For each $r$, in the closed system, 
$\bX^r(\cdot)$ has a (not necessarily unique) stationary distribution, 
so $\bx^r(\cdot)$ also has a stationary distribution. 
We will see shortly that in an open system, 
$\bX^r(\cdot)$ also has a stationary distribution 
(see Lemma \ref{lem-complete-state-tight}). 
%For every $r$ and $t$, $\bx^r(t) \in \R_+^{|\ck|}$. 
Denote by $\bX^r(\infty)$ 
and $\bx^r(\infty)$ the random states of the corresponding processes 
in a stationary regime. (Recall the convention in Section~\ref{ssec:notation}.)

We now argue that as $r \to \infty$, 
\begin{equation}\label{eq:approx-conservation}
\sum_{\bk \in \ck} k_i x^r_{\bk}(\infty) \implies \rho_i, \mbox{ for all } i. 
\end{equation}
%as $r \rightarrow \infty$.

In a closed system, for each $i \in \ci$, there are $\rho_i r$ customers 
of type $i$ in the system at all times, so on all sample paths, 
\[
\sum_{\bk \in \ck} k_i x^r_{\bk}(t) = \rho_i, \mbox{ for all } r, t \mbox{ and } i. 
\]
This implies that the same holds for $\bx^r(\infty)$. In an open system, 
the total number of type-$i$ customers is $\sum_{\bk \in \ck} k_i X^r_{\bk}(\infty)$, 
in steady state.  
It is easy to see that, independent from the service policy, 
this quantity is a Poisson random variable with mean $\rho_i r$. 
Thus, as $r \rightarrow \infty$, $\sum_{\bk \in \ck} k_i x^r_{\bk}(\infty) \implies \rho_i$.

Now consider the following linear program (LP). 
\begin{eqnarray}
\mbox{Minimize} & & \sum_{\bk\in \ck} x_{\bk} \\
\mbox{subject to} & & \sum_{\bk \in \ck} k_i x_{\bk} = \rho_i, \quad \mbox{for all } i \in \ci, \label{eq:conservation} \\
			    & & \quad \quad \ \ x_{\bk} \geq 0, \quad \mbox{ for all } \bk \in \ck.
\end{eqnarray}
Denote by $\cx$ the set of feasible solutions to LP: 
$$\cx = \{\bx \in \R_+^{|\ck|} : \sum_{\bk \in \ck} k_ix_{\bk} = \rho_i, i \in \ci\}.$$
Then $\cx$ is a compact subset of $\R_+^{|\ck|}$. 
Let $\cx^*$ denote the set of optimal solutions of LP, 
and let $u^*$ denote its optimal value. 
%By Eqs. \eqref{eq:approx-conservation}, 
%any weak limit point of $\bx^r(\infty)$, say $\bx(\infty)$, 
%satisfies the conservation law \ref{eq:conservation}, 
%so by asymptotic optimality of a policy, we mean that 
%with probability $1$, the corresponding $\bx(\infty)$ 
%satisfies $\sum_{\bk \in \ck} x_{\bk}(\infty) = u^*$. 
In light of Eqs. \eqref{eq:approx-conservation} 
and \eqref{eq:conservation}, a service policy is asymptotically optimal 
if, roughly speaking, under this policy and for large $r$, 
$\sum_{\bk \in \ck} x^r_{\bk}(\infty) \approx u^*$ with high probability 
(cf. Theorems \ref{thm:main-closed} and \ref{th-main-res-open}). 

The following characterization of the set $\cx^*$ by dual variables 
will be useful. The proof is elementary and omitted.
\begin{lem}\label{lem:lp-dual-char}
$\bx = (x_{\bk})_{\bk \in \ck} \in \cx^*$ iff 
$\bx$ is a feasible solution of LP, and 
there exist $\eta_i \in \R$, $i \in \ci$, such that 
\begin{itemize}
\item[(i)] $\sum_{i \in \ci} k_i \eta_i \leq 1$ for all $\bk \in \ck$, and 
\item[(ii)] if $\sum_{i \in \ci} k_i \eta_i < 1$, then $x_{\bk} = 0$. 
\end{itemize}
\end{lem}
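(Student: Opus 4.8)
The statement is a standard LP duality / complementary-slackness characterization, so the plan is to apply strong duality to the LP and then unpack what optimality of a primal feasible point means in terms of the dual optimal solution. First I would write the LP in a form convenient for dualizing: the primal has variables $x_{\bk}\ge 0$, equality constraints $\sum_{\bk\in\ck} k_i x_{\bk}=\rho_i$ for $i\in\ci$, and objective $\min \sum_{\bk} x_{\bk}$. Since $\cx$ is nonempty (e.g. the configuration $\be_i\in\bar\ck$ for every $i$ guarantees feasibility) and the objective is bounded below by $0$, the LP has a finite optimum and strong duality holds. Assigning a free multiplier $\eta_i\in\R$ to the $i$-th equality constraint, the dual is $\max \sum_{i} \rho_i \eta_i$ subject to $\sum_{i\in\ci} k_i \eta_i \le 1$ for all $\bk\in\ck$ (the dual constraint coming from primal variable $x_{\bk}$, whose objective coefficient is $1$). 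This is exactly condition (i).

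Next I would invoke complementary slackness for the primal-dual pair. Given a primal feasible $\bx$ and a dual feasible $\boldeta=(\eta_i)$, both are optimal for their respective problems if and only if complementary slackness holds; since all primal constraints are equalities, the only slackness condition is on the primal variables: for each $\bk\in\ck$, either $x_{\bk}=0$ or the $\bk$-th dual constraint is tight, i.e. $\sum_{i} k_i\eta_i = 1$. Contrapositively, if $\sum_{i} k_i\eta_i < 1$ then $x_{\bk}=0$, which is condition (ii). So for the forward direction: if $\bx\in\cx^*$, take $\boldeta$ to be any dual optimal solution (exists by strong duality); then $\bx$ feasible plus optimal together with $\boldeta$ dual optimal forces complementary slackness, giving (i) and (ii). For the converse: if $\bx$ is primal feasible and some $\boldeta$ satisfies (i) and (ii), then $\boldeta$ is dual feasible by (i), and (ii) is precisely complementary slackness, so $\bx$ is primal optimal, i.e. $\bx\in\cx^*$.

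Alternatively, and perhaps more self-containedly, I would avoid quoting complementary slackness as a black box and instead argue directly via weak duality. For any primal feasible $\bx$ and any $\boldeta$ satisfying (i), one has
\[
\sum_{\bk\in\ck} x_{\bk} \;\ge\; \sum_{\bk\in\ck} x_{\bk}\Big(\sum_{i\in\ci} k_i\eta_i\Big) \;=\; \sum_{i\in\ci}\eta_i\sum_{\bk\in\ck} k_i x_{\bk} \;=\; \sum_{i\in\ci}\eta_i\rho_i,
\]
using $x_{\bk}\ge 0$ and $\sum_i k_i\eta_i\le 1$ in the first step and the primal equality constraints in the last. Equality holds throughout iff $x_{\bk}=0$ whenever $\sum_i k_i\eta_i<1$, i.e. iff (ii) holds. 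Thus if (i) and (ii) hold then $\sum_{\bk} x_{\bk} = \sum_i \rho_i\eta_i \le u^*$ (the last inequality because $\sum_i\rho_i\eta_i$ is a lower bound on every primal feasible value, in particular on $u^*$), forcing $\bx\in\cx^*$; conversely, taking $\boldeta$ dual optimal (so $\sum_i\rho_i\eta_i = u^*$ by strong duality) and $\bx\in\cx^*$, the chain of inequalities is squeezed to equalities, yielding (i) (already assumed of $\boldeta$) and (ii).

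There is no real obstacle here — the only point requiring a word of care is the existence of a dual optimal solution, which needs strong duality for LPs (primal feasible and bounded $\Rightarrow$ dual feasible and attains the same value); this is why the hint in the paper that the proof is "elementary and omitted" is apt. I would present the weak-duality version since it is short and transparent, and simply remark that feasibility of $\cx$ follows from $\be_i\in\bar\ck$ for all $i$ so that $u^*$ is well defined and finite.
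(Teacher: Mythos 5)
Your proof is correct and takes exactly the route the paper intends: the authors omit the proof, noting only that it is an elementary consequence of LP duality and complementary slackness, which is precisely your argument. The self-contained weak-duality chain you give in the third paragraph is a clean way to make the omitted proof explicit.
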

\iffalse
\begin{proof} 
The lemma is a simple consequence of the characterization 
of primal optimal solutions through complimentary slackness. 
\end{proof}
\fi
The following lemma relates the distance between a point $\bx \in \cx$ 
and the optimal set $\cx^*$ to the objective value of LP evaluated at $\bx$.
\begin{lem}\label{LEM:LP-RATE}
There exists a positive constant $D \ge 1$ such that for any $\bx \in \cx$, 
\[
D\left(\sum_{\bk \in \ck} x_{\bk} - u^*\right) \ge d\left(\bx, \cx^*\right).
\]
\end{lem}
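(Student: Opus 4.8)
This is a Lipschitz (Hoffman-type) error bound for the linear program: the distance to the optimal face is controlled \emph{linearly} by the optimality gap. One could deduce it in one line from Hoffman's lemma applied to the polyhedron $\cx^* = \{\bx \in \cx : \sum_{\bk} x_{\bk} \le u^*\}$ (note that on $\cx$ the objective is always $\ge u^*$, so intersecting with this half-space produces exactly $\cx^*$, and for $\bx\in\cx$ the only possibly violated constraint is $\sum_{\bk}x_{\bk}\le u^*$, violated by exactly the gap). However, I would give a self-contained argument exploiting the polyhedral structure of $\cx$ directly, in keeping with the elementary flavour of the surrounding LP lemmas.

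First I would recall that $\cx$, being a compact polyhedron, is the convex hull of its finite set of extreme points $\bv_1,\dots,\bv_m$; set $R=\max_{1\le j\le m}\|\bv_j\|<\infty$ and write $f(\bx)=\sum_{\bk\in\ck}x_{\bk}$. Since $f$ is linear, it attains $u^*$ at an extreme point, and the optimal face $\cx^*$ is precisely the convex hull of the optimal extreme points; let $S=\{j: f(\bv_j)=u^*\}$ and $\bar S=\{1,\dots,m\}\setminus S$. If $\bar S=\varnothing$ then $\cx=\cx^*$ and the claim holds trivially with $D=1$, so assume $\bar S\ne\varnothing$ and put
\[
\delta=\min_{j\in\bar S}\bigl(f(\bv_j)-u^*\bigr)>0,
\]
a minimum of finitely many strictly positive numbers, since $f(\bv_j)>u^*$ for every $j\notin S$.

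Next, given $\bx\in\cx$, write $\bx=\sum_{j=1}^m\gamma_j\bv_j$ with $\gamma_j\ge 0$, $\sum_j\gamma_j=1$, and let $\beta=\sum_{j\in S}\gamma_j$. Because $f(\bv_j)=u^*$ for $j\in S$,
\[
f(\bx)-u^*=\sum_{j\in\bar S}\gamma_j\bigl(f(\bv_j)-u^*\bigr)\ \ge\ \delta\,(1-\beta).
\]
For the distance, when $\beta>0$ I would take $\bx^*=\beta^{-1}\sum_{j\in S}\gamma_j\bv_j\in\cx^*$; then $\bx=\beta\bx^*+\sum_{j\in\bar S}\gamma_j\bv_j$, so $\bx-\bx^*=\sum_{j\in\bar S}\gamma_j\bv_j-(1-\beta)\bx^*$ and hence, using that $\|\bx^*\|\le R$ and $\sum_{j\in\bar S}\gamma_j=1-\beta$,
\[
d(\bx,\cx^*)\le\|\bx-\bx^*\|\le\sum_{j\in\bar S}\gamma_j\|\bv_j\|+(1-\beta)\|\bx^*\|\le 2R\,(1-\beta);
\]
when $\beta=0$ the same bound $d(\bx,\cx^*)\le\|\bx-\bx^*\|\le\|\bx\|+\|\bx^*\|\le 2R=2R(1-\beta)$ holds for any fixed $\bx^*\in\cx^*$ (which exists since the LP is feasible and bounded). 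Combining the two displays gives $d(\bx,\cx^*)\le (2R/\delta)\,(f(\bx)-u^*)$, and the lemma follows with $D=\max\{2R/\delta,\,1\}$.

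\textbf{Where the difficulty (such as it is) lies.} There is no serious obstacle, but the one point worth care is that a naive compactness argument fails: taking a hypothetical counterexample sequence $\bx^{(n)}\in\cx$ with $d(\bx^{(n)},\cx^*)/(f(\bx^{(n)})-u^*)\to\infty$ only forces a subsequential limit into $\cx^*$, which does not preclude the ratio exploding as one approaches the optimal face. It is exactly the finiteness of the vertex set — made quantitative through the strictly positive constant $\delta$ — that yields the required \emph{uniform} local bound, which is why I would organize the proof around the vertex decomposition rather than around compactness.
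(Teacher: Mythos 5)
Your proof is correct and follows essentially the same route as the paper's: both decompose an arbitrary $\bx\in\cx$ as a convex combination of a point of $\cx^*$ and a point in the convex hull of the non-optimal extreme points, and both exploit the strictly positive gap $\delta$ that the objective has at every non-optimal vertex. The only cosmetic difference is that the paper bounds the relevant ratio by compactness and continuity of the function $g(\bx^*,\bx')=\|\bx^*-\bx'\|/(\sum_{\bk}x'_{\bk}-u^*)$, whereas you make the constant explicit as $D=\max\{2R/\delta,1\}$.
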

Note that $D \ge 1$ is necessary, 
since for every $\bx \in \cx$, 
$d\left(\bx, \cx^*\right) \ge \sum_{\bk \in \ck} x_{\bk} - u^*$.
%\red{COM: Remove proof if short of space. ENDCOM.}
\begin{proof} {See Appendix \ref{apdx:lp-rate}.}
\end{proof}

%COM: THIS IS THE CONSTRUCTION OF PROB. SPACE. IT GOES TO THE POINT JUST BEFORE
%THE TECHNICAL PRELIMINARIES OF THE MAIN RESULT FOR CLOSED SYSTEM, WHERE WE %START MAKING CLAIMS ABOUT POISSON PROCESSES.

\subsection{Greedy with {sublinear} Safety Stocks ({GSS})}
\label{sec-GSS-definition}
Now we introduce the service policy, {\em Greedy with sublinear Safety Stocks (GSS)}, along with 
a variant, which we will prove to be asymptotically optimal.

\subsubsection{{GSS} Policy in a Closed System}\label{sssec:gss-closed}

\noindent {\bf {\em GSS}.} Let $p \in (\frac{1}{2}, 1)$. For a given $r$, define a weight function 
$w^r : \R_+ \rightarrow \R_+$ to be 
$w^r(X) = 1 \wedge \frac{X}{r^p}$. 
Let $\cm$ denote the set of all pairs $(\bk, i) \in \ck \times \ci$ 
such that $\bk \in \ck$ and $\bk - \be_i \in \bar\ck$. 
Given $\bX = \{X_{\bk'}, \bk' \in \ck\}$ and $(\bk, i) \in \cm$, 
define $\Delta^r_{(\bk, i)}(\bX) = w^r\left(X_{\bk}\right) - w^r(X_{\bk - \be_i})$.
Under {\em GSS}, a customer of type $i$ is placed into a server with configuration $\bk - \be_i$ 
where $X_{\bk - \be_i} > 0$ or $\bk - \be_i = \bZero$, such that $\Delta_{(\bk, i)}(\bX)$ is minimal. 
Ties are broken arbitrarily. 

Note that the {\em GSS} policy makes decisions based only the current system state.
The parameter $r$ which it uses is nothing else but the total number of customers
in the system, which is, of course, a function of the state, and which happens to be 
constant in the closed system.

We now provide an intuitive explanation of the policy. 
Let $f^r$ be the anti-derivative of $w^r$, so that
\[
f^r(X) = \left\{\begin{array}{ll}
\frac{X^2}{2 r^p}, & \mbox{ if } X \in [0, r^p]; \\
X - \frac{r^p}{2}, & \mbox{ if } X > r^p.
\end{array}\right.
\]
Let $F^r(\bX) = \sum_{\bk \in \ck} f^r(X_{\bk})$. 
Then $w^r$ and $\Delta^r_{(\bk, i)}$ capture the first-order change in $F^r$. 
Suppose that the current system state is $\bX = (X_{\bk})_{\bk \in \ck}$. 
Then, placing a type-$i$ customer into a server with configuration $\bk - \be_i$ 
only changes $X_{\bk-\be_i}$ and $X_{\bk}$: 
$X_{\bk - \be_i}$ decreases by $1$ (if $X_{\bk - \be_i} > 0$), 
and $X_{\bk}$ increases by $1$. Thus, the first-order change in $F^r$ is 
\[
\frac{d}{dX} f^r(X) \Big|_{X = X_{\bk}} - \frac{d}{dX} f^r(X) \Big|_{X = X_{\bk - \be_i}} 
= \Delta^r_{(\bk, i)}(\bX).
\]
In this sense, {\em GSS} decreases $F^r$ greedily, by placing a customer 
into a server that results in the largest (first-order) decrease in $F^r$. 
%COM: DELETED Roughly speaking, under {\em GSS}, the system state will minimize 
%the function $F^r$, as time goes to infinity. 

The next lemma states that $F^r(\bX)$ only differs from $\sum_{\bk} X_{\bk}$ 
by $O(r^p)$. The proof is straightforward and omitted.

\begin{lem}\label{lem:F-sum-close}
For any $\bX \in \R_+^{|\ck|}$, 
\[
\sum_{\bk\in \ck} X_{\bk} - \frac{|\ck| r^p}{2} \leq F^r(\bX) \leq \sum_{\bk \in \ck} X_{\bk}.
\]
\end{lem}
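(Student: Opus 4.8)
The plan is to reduce the claim to a one-dimensional estimate applied coordinate by coordinate. Concretely, I would first establish that for every real $X \ge 0$,
\[
X - \frac{r^p}{2} \;\le\; f^r(X) \;\le\; X,
\]
and then sum this inequality over the $|\ck|$ coordinates $\bk \in \ck$. Using $F^r(\bX) = \sum_{\bk \in \ck} f^r(X_{\bk})$, the upper bound $f^r(X_{\bk}) \le X_{\bk}$ yields $F^r(\bX) \le \sum_{\bk} X_{\bk}$ directly, while the lower bound $f^r(X_{\bk}) \ge X_{\bk} - r^p/2$ yields $F^r(\bX) \ge \sum_{\bk} X_{\bk} - |\ck| r^p/2$, which is exactly the desired two-sided bound.

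For the scalar inequality I would split according to the two regimes in the definition of $f^r$ (noting that the two pieces agree at $X = r^p$, since $(r^p)^2/(2r^p) = r^p/2 = r^p - r^p/2$, so the analysis covers all $X \ge 0$ including the boundary). When $X > r^p$ we have $f^r(X) = X - r^p/2$, so both inequalities hold, each with equality on one side. When $X \in [0, r^p]$ we have $f^r(X) = X^2/(2r^p)$; the upper bound $X^2/(2r^p) \le X$ is equivalent to $X \le 2r^p$, which holds since $X \le r^p$, and the lower bound, after clearing the positive factor $2r^p$, is equivalent to $X^2 - 2 r^p X + r^{2p} \ge 0$, i.e. $(X - r^p)^2 \ge 0$, which is always true. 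Alternatively, one can obtain the upper bound from $f^r(X) = \int_0^X w^r(y)\,dy$ together with $0 \le w^r \le 1$, and the lower bound from the fact that $f^r$ is convex and lies above its linear asymptote $y \mapsto X - r^p/2$, which it meets tangentially at $X = r^p$.

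There is no genuine obstacle here: the statement is an elementary convexity/antiderivative estimate. The only points requiring a little care are the bookkeeping of the $|\ck|$ factor when summing, and stating the scalar bound for all $X \ge 0$ (rather than on the two open subintervals separately), so that the value $X = r^p$ is handled uniformly.
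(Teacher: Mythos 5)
Your proof is correct, and since the paper omits the proof of this lemma as "straightforward," the coordinate-wise scalar bound $X - r^p/2 \le f^r(X) \le X$ followed by summation over $\bk \in \ck$ is exactly the intended argument. The verification of both regimes (including the reduction of the lower bound to $(X - r^p)^2 \ge 0$) and the handling of the boundary point $X = r^p$ are all in order.
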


%COM: REWRITTEN HERE\\
Under the fluid scaling described earlier,
 the difference $O(r^p)$ between $F^r(\bX)$ and $\sum_{\bk \in \ck} X_{\bk}$ 
becomes negligible, as it is of order $o(r)$. 
Thus, for a fluid-scaled process, minimizing
$F^r(\bX)$ (what {\em GSS} tries to do) is ``equivalent'' to
minimizing $\sum_{\bk \in \ck} X_{\bk}$, when $r$ is large.
\iffalse
In the sequel, we will see that we are interested in asymptotically optimal solution(s) 
when we scale the system state by $r$, the total number of customers in the system. 
Under this scaling, the difference $O(r^p)$ between $F^r(\bX)$ and $\sum_{\bk \in \ck} X_{\bk}$ 
becomes negligible, as it is of an order-of-magnitude smaller than $r$. 
Since {\em GSS} tries to minimize $F^r$, and $F^r(\bX)$ and $\sum_{\bk \in \ck} X_{\bk}$ 
are close, {\em GSS} produces an asymptotically optimal system state 
that minimizes $\sum_{\bk \in \ck} X_{\bk}$, when $r$ is large.
\fi

\subsubsection{{GSS} Policy in an Open System}\label{sssec:gss-open}

First, we describe the ``pure'' {\em GSS} policy.
\newline
\newline
{\bf {\em GSS}.} 
Let $p \in (\frac{1}{2}, 1)$.
For a given system state $\bX$, let $Z=Z(\bX)$ denote the total number of customers in the system. For a system with parameter $r$,
define a weight function $\bar w^r(X)=\bar w^r(X;Z)$ as follows: $\bar w^r(X) = 1 \wedge \frac{X}{Z^p}$. 
(Note that $\bar w^r(X)$ generalizes the corresponding weight function $w^r(X) = 1 \wedge \frac{X}{r^p}$
for the closed system, because in the closed system with parameter $r$
the total number of customers
is constant $Z\equiv r$.)
Let $\cm$ denote the set of all pairs $(\bk, i) \in \ck \times \ci$ 
such that $\bk \in \ck$ and $\bk - \be_i \in \bar\ck$. 
Given $\bX = \{X_{\bk'}, \bk' \in \ck\}$ and $(\bk, i) \in \cm$, 
define $\bar \Delta^r_{(\bk, i)}(\bX) = \bar w^r\left(X_{\bk}\right) - \bar w^r(X_{\bk - \be_i})$.
Under {\em GSS}, an arriving customer of type $i$ is placed into a server with configuration $\bk - \be_i$ 
where $X_{\bk - \be_i} > 0$ or $\bk - \be_i = \bZero$, such that $\bar \Delta_{(\bk, i)}(\bX)$ is minimal. 
Ties are broken arbitrarily.

In this paper, for the open system, we will analyze not the ``pure'' {\em GSS} policy,
described above,
but its slight modification, called {\em Modified GSS} ({\em GSS-M}).
\newline\newline
{\bf {\em GSS-M}.} Under this policy, a {\em token} of type $i$ is generated immediately upon each service completion of type $i$, and is placed for ``service'' immediately
according to {\em GSS}. The system state $\bX = \{X_{\bk}, \bk\in \ck\}$ account for both tokens 
of type $i$ as well as actual type-$i$ customers for all $i \in \ci$.
Each arriving type $i$ customer first seeks to replace an existing token
of type $i$ already in ``service''  (chosen arbitrarily),
and if there is none, it is placed for service according to {\em GSS}. Each token that is not replaced by an actual arriving customer before
an independent exponentially distributed timeout with mean $1/\mu_0$, leaves the system.
(This modification is the same as the one introduced  in \cite{St2012} for the {\em Greedy} algorithm, to obtain the {\em Greedy-M} policy.) 

We emphasize that {\em GSS} and {\em GSS-M} do {\em not} require the knowledge of parameter $r$.

Since the system evolution under the {\em GSS-M} involves both actual customers and tokens,
we need to define the Markov chain describing this evolution more precisely.
A {\em complete server configuration} is defined (in the same way as in \cite{St2012}) as
a pair $(\bk,\hat \bk)$, where vector $\bk=(k_1,\ldots,k_I)\in \ck$ gives the numbers of all customers
(both actual and tokens) in a server, while vector $\hat \bk \le \bk$, $\bk\in \bar \ck$,
gives the numbers
of actual customers only. 
%For each $r$, let $X_{(\bk, \hat \bk)}^r$ denote the number of servers in configuration $(\bk, \hat \bk)$. 
The Markov process state at time $t$ is
the vector $\{X_{(\bk,\hat \bk)}^r(t)\}$, where the index $(\bk,\hat \bk)$ takes values
that are all possible complete server configurations, and 
superscript $r$, as usual, indicates the system with parameter $r$.
Note that $\bX^r(t) = \{X^r_{\bk}(t), \bk \in \ck\}$ can be considered as a ``projection'' of 
$\{X^r_{(\bk,\hat \bk)}(t)\}$, with $X^r_{\bk}(t) = \sum_{\hat \bk: \hat \bk \le \bk} X^r_{(\bk, \hat \bk)}$ 
for each $\bk \in \ck$.
{Let $\hat Y_i^r(t)$, $\tilde Y_i^r(t)$, and $Y_i^r(t)=\hat Y_i^r(t)+\tilde Y_i^r(t)$ 
denote the total number of actual type-$i$ customers, the
total number of type-$i$ tokens, and
the total number of all (both actual and tokens) 
type-$i$ customers in the $r$th system, 
respectively.} 
%We then have $\hat Y_i^r(t)\le Y_i^r(t)$ for all $i$ and $t$\st{,
%where $Y_i^r(t)$ and $\hat Y_i^r(t)$ is the total number of all and actual type $i$ customers,
%respectively, and superscript $r$ indicates the system with parameter $r$;}.
The total number of actual customers of all types 
is then $Z^r(t)=\sum_i \hat Y^r_i(t)$.
The behaviors of the processes $\{(Y_i^r(t), \hat Y_i^r(t)), ~t\ge 0\}$, are independent
across all $i$, with $\hat Y_i^r(\infty)$ 
having Poisson distribution with mean $\rho_i r$.
The following fact has the same proof as Lemma 11 in \cite{St2012}.
\begin{lem}
\label{lem-complete-state-tight}
{The} Markov chain 
$\{X_{(\bk,\hat \bk)}^r(t)\}, ~t\ge 0$,
is irreducible and positive recurrent for each $r$. 
\iffalse
Moreover, the distributions of
$\{(\hat y_i^r(\infty), y_i^r(\infty)), ~i\in \ci\}= 
(1/r)\{(\hat Y_i^r(\infty), Y_i^r(\infty)), ~i\in \ci\}$
are tight, and
any limit in distribution \\
$\{(\hat y_i(\infty), y_i(\infty)), ~i\in \ci\}$ 
is such that $\hat y_i(\infty)=\rho_i$ and $y_i(\infty)\le \rho_i + \lambda_i /\mu_0$
for all $i$. Consequently, the distributions of 
$\{x_{(k,\hat k)}^r(\infty)\}= (1/r)\{X_{(k,\hat k)}^r(\infty)\}$
are tight.
\fi
\end{lem}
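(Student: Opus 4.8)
The plan is to prove irreducibility by a direct reachability argument and positive recurrence by a Foster--Lyapunov drift condition; as the statement indicates, this mirrors the proof of Lemma~11 in \cite{St2012}, so I will only sketch it, emphasizing the points specific to \emph{GSS-M}.

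For irreducibility I would take the state space of the chain to be the set $\cs^r$ of complete-configuration vectors $\{X_{(\bk,\hat\bk)}^r\}$ reachable from the empty state $\bZero$ (all servers empty, no actual customers and no tokens). Since $\ck$, and hence the set of complete configurations $(\bk,\hat\bk)$, is finite, $\cs^r$ sits inside a finite-dimensional lattice $\Z_+^d$ (with the coordinates themselves unbounded). From $\bZero$ every state of $\cs^r$ is reachable by definition. Conversely, from an arbitrary state one reaches $\bZero$ through a finite sequence of transitions, each of positive rate: with no exogenous arrival occurring, let each actual customer in turn complete service -- each such completion merely converts that customer into a type-$i$ token in the same server -- and then let each remaining token in turn time out. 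Hence every state of $\cs^r$ communicates with $\bZero$, so the chain is irreducible on $\cs^r$.

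For positive recurrence the key observation is that the per-type aggregates evolve independently of the placement rule: actual type-$i$ customers arrive at rate $\lambda_i r$ and each departs at rate $\mu_i$; each departure instantaneously creates a type-$i$ token; and each token departs at rate $\mu_0$ by timeout, and is additionally removed whenever a type-$i$ customer arrives while $\tilde Y_i^r>0$. Writing $\mathcal{A}$ for the generator, this gives $\mathcal{A}\hat Y_i^r = \lambda_i r - \mu_i \hat Y_i^r$ and $\mathcal{A}\tilde Y_i^r \le \mu_i \hat Y_i^r - \mu_0 \tilde Y_i^r$. I would then use the Lyapunov function $V = \sum_{i\in\ci}\bigl(2\hat Y_i^r + \tilde Y_i^r\bigr) = \sum_{i\in\ci}\bigl(\hat Y_i^r + Y_i^r\bigr)$, for which a short computation gives
\[
\mathcal{A} V \;\le\; 2r\sum_{i\in\ci}\lambda_i \;-\; c_0\sum_{i\in\ci} Y_i^r \;\le\; C - c'V,
\]
where $c_0 = \bigl(\min_{i\in\ci}\mu_i\bigr)\wedge\mu_0 > 0$, $C = 2r\sum_{i\in\ci}\lambda_i$, and $c' = c_0/2$ (using $V\le 2\sum_{i\in\ci} Y_i^r$). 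Since each occupied server holds at least one customer or token, the number of occupied servers -- and, with a little more bookkeeping, each coordinate $X_{(\bk,\hat\bk)}^r$ -- is bounded by $\sum_{i\in\ci} Y_i^r \le V$; as there are only finitely many complete configurations, the sublevel set $\{V\le M\}$ is finite for every $M$. Choosing $M$ with $C - c'M \le -1$, the drift of $V$ is at most $-1$ outside a finite set, and $\mathcal{A}V\le C$ everywhere precludes explosion, so Foster's criterion yields positive recurrence.

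I expect the only mildly delicate step to be the second one -- not the drift computation, which is routine, but the fact that a clean drift estimate is available at all: the full chain $\{X_{(\bk,\hat\bk)}^r(t)\}$ genuinely depends on the \emph{GSS-M} placement decisions, yet the chosen $V$ depends on the state only through the per-type totals, whose dynamics are placement-free. The accompanying observation -- that controlling $\sum_{i\in\ci} Y_i^r$ controls the whole state vector -- is where the finiteness of $\bar\ck$ is used.
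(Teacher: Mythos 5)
Your proof is correct and rests on exactly the observation the paper's one-line proof (deferral to Lemma~11 of \cite{St2012}) relies on: the per-type aggregates $(\hat Y_i^r,\tilde Y_i^r)$ evolve autonomously of the placement rule, so a Foster--Lyapunov drift on the totals, combined with finiteness of $\bar\ck$ to make the sublevel sets finite in the full state space, yields positive recurrence, while reachability to and from $\bZero$ gives irreducibility. One immaterial slip: under \emph{GSS-M} the token generated at a departure is placed according to \emph{GSS} and need not land in the server the customer just left, but this does not affect your path to $\bZero$ or the drift computation.
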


\noindent {\bf Remark.} Informally, the reason
(which is the same as in \cite{St2012})
for considering a modified version of {\em GSS} instead of pure {\em GSS} in an open system
is as follows. 
%A major difference between the closed and open system is as follows. 
Recall that in a closed system, a customer migration can be also thought of as 
its departure followed immediately by an arrival of the same type. As such, 
departures and arrivals in a closed system are perfectly ``synchronized'', 
which in particular means that in a closed system, for every departing customer, 
we always have the option of putting it right back into the server which it has just 
departed from. This means that a greedy control, pursuing minimization 
of a given objective function, cannot  possibly increase (up to a first-order
approximation) the objective function at every customer migration.
%and under {\em GSS}, we can easily keep track of changes in $F^r$ -- for example, 
%at each pair of customer departure and arrival that immediately follows it. 
In contrast, in an open system, departures and arrivals are not synchronized. 
Therefore, it is not immediately clear that a greedy algorithm
will necessarily improve the objective.
%so no such obvious pairing is available.  
The tokens are introduced so that, informally speaking,
the decisions on placements of new type-$i$ arrivals are made somewhat ``in advance'',
at the times of prior type-$i$ departures. In this sense, the behavior of an open system
``emulates'' that of a corresponding closed system.
%we can keep track of changes in $F^r$, 
%so in an open system, we consider the policy {\em GSS-M}.}

\subsection{Main Results}\label{ssec:results}
\begin{thm}\label{thm:main-closed}
Let $p \in (\frac{1}{2}, 1)$. 
For each $r$, consider the closed system operating under {\em GSS} policy, in steady state.
Then there exists some constant $C > 0$, 
not depending on $r$, such that 
\[
\pr\left(d(\bx^r(\infty), \cx^*) \le C r^{p-1}\right) \rightarrow 1
\]
as $r \rightarrow \infty$. Consequently, we have fluid-scale asymptotic optimality:
$$
d(\bx^r(\infty), \cx^*) \implies 0.
$$
\end{thm}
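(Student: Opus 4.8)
The plan is to analyze the fluid-scaled Markov chain $\bx^r(\cdot)$ together with the ``local fluid scaling'' needed to control the safety stocks, and to show that in steady state $\bx^r(\infty)$ concentrates within $O(r^{p-1})$ of $\cx^*$. The Lyapunov function is $F^r(\bX)=\sum_{\bk}f^r(X_{\bk})$, which by Lemma~\ref{lem:F-sum-close} differs from $\sum_{\bk}X_{\bk}$ by $O(r^p)$, i.e. $o(r)$ on the fluid scale. First I would establish the fluid-limit picture: any fluid limit trajectory $\bx(\cdot)$, obtained from $\bx^r(\cdot)$ as $r\to\infty$, satisfies the conservation law $\sum_{\bk}k_i x_{\bk}(t)=\rho_i$ for all $i$ (this is immediate from the closed-system constraint), and along such a trajectory the scaled objective $\varphi(\bx)=\sum_{\bk}x_{\bk}$ is non-increasing and strictly decreases at a rate bounded away from $0$ as long as $d(\bx,\cx^*)$ is bounded away from $0$. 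The mechanism is the greedy property of {\em GSS}: each migration changes $F^r$ by (first order) $\Delta^r_{(\bk,i)}(\bX)=w^r(X_{\bk})-w^r(X_{\bk-\be_i})$, and {\em GSS} always picks the most negative such increment; using Lemma~\ref{lem:lp-dual-char} one shows that whenever the fluid state is not LP-optimal there is a feasible migration direction with strictly negative drift, and {\em GSS} does at least as well. This, combined with Lemma~\ref{LEM:LP-RATE} (which converts the objective gap into the distance $d(\bx,\cx^*)$), gives that fluid limits converge to $\cx^*$, and that $\cx^*$ is attracting; a standard argument (Lyapunov/Stolyar-type, tightness of $\bx^r(\infty)$ since $\bx^r(t)\in\cx$ compact) then upgrades this to $d(\bx^r(\infty),\cx^*)\implies 0$, which is the final statement.

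The more delicate part — needed for the sharp $C r^{p-1}$ bound rather than just $o(1)$ — is to show that once the fluid state is near $\cx^*$, it actually sits within $O(r^{p-1})$, i.e. $d(\bX^r(\infty),\cx^*)=O(r^p)$ in unscaled terms. Here the weight function $w^r(X)=1\wedge X/r^p$ is essential: configurations $\bk$ with $X^r_{\bk}\gg r^p$ have weight exactly $1$, so greedy placement is ``indifferent'' among them and cannot create spurious imbalance; configurations with $X^r_{\bk}\lesssim r^p$ are the automatically-created sublinear safety stocks, and the point is that {\em GSS} refuses to draw such a stock down to zero because doing so would require $\Delta^r_{(\bk,i)}$ near $-1$ at a configuration whose partner stock is itself small. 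I would make this precise by a second-order / ``local fluid'' analysis: rescale $X^r_{\bk}$ by $r^p$ near configurations where the stock is small, and show the resulting local process has a drift pushing the scaled stock up whenever it is near $0$, hence is positive recurrent with $O(1)$ stationary values, i.e. $X^r_{\bk}(\infty)=O(r^p)$ for the ``zero'' configurations, while the ``positive'' configurations $x^r_{\bk}(\infty)$ track the LP-optimal coordinates to within $O(r^{p-1})$. Feeding these two facts back through Lemma~\ref{LEM:LP-RATE} yields $\sum_{\bk}x^r_{\bk}(\infty)-u^*=O(r^{p-1})$ and hence $d(\bx^r(\infty),\cx^*)\le C r^{p-1}$ with high probability.

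The main obstacle, I expect, is precisely this local-fluid step: controlling the safety-stock dynamics on the $r^p$ scale simultaneously for all ``small'' configurations, while the ``large'' configurations evolve on the $r$ scale, so that the two timescales interact correctly. One has to show (i) that the set of configurations that end up with a sublinear (rather than linear) population is exactly the ``$x_{\bk}=0$'' set forced by LP optimality plus possibly a few extra ones, and these are identified {\em automatically} by the policy without a priori knowledge; and (ii) that the drift estimates are uniform enough to give a steady-state tail bound of the form $\pr(\max_{\bk}X^r_{\bk}(\infty)\mathbf{1}\{\text{small}\}>C r^p)\to 0$. This requires a careful Lyapunov argument in the local scaling — a natural candidate is $\sum_{\bk}g(X^r_{\bk}/r^p)$ for a suitable convex $g$, or a direct coupling/excursion argument — combined with the global Lyapunov drift of $F^r$; the interplay of these, rather than any single estimate, is where the work lies. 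The unscaled conclusion $d(\bx^r(\infty),\cx^*)\implies 0$ then follows a fortiori, since $Cr^{p-1}\to 0$ as $p<1$.
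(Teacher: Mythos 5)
Your high-level ingredients are the right ones (the Lyapunov function $F^r$, Lemma~\ref{lem:F-sum-close}, Lemma~\ref{LEM:LP-RATE}, LP duality, and a local scaling by $r^p$ to see the safety stocks), and your first paragraph would indeed deliver the weak conclusion $d(\bx^r(\infty),\cx^*)\implies 0$ by a fluid-limit argument in the spirit of \cite{St2012}. But the proposal does not prove the $Cr^{p-1}$ bound: you explicitly defer the decisive step to ``a careful Lyapunov argument in the local scaling \ldots the interplay of these, rather than any single estimate, is where the work lies,'' and the route you sketch for it --- establishing positive recurrence of the rescaled safety-stock processes and a steady-state tail bound $\pr(\max_{\bk}X^r_{\bk}(\infty)\mathbf{1}\{\text{small}\}>Cr^p)\to 0$, together with a separate claim that the ``positive'' coordinates track the LP solution to within $O(r^{p-1})$ --- is not carried out and is considerably harder than what is actually needed. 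In particular, nothing in your sketch explains how to control the \emph{stationary} distribution of the multi-dimensional safety-stock vector, whose components interact through the greedy placement rule; this is precisely the part you flag as the obstacle.

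The paper's proof avoids this entirely by never analyzing the stationary local process. The key quantitative input is Proposition~\ref{prop:opt-si-1}: whenever $d(\bx^r,\cx^*)\ge 2D|\ck|r^{p-1}$, there is a strictly improving pair with \emph{uniformly quantified} margins ($\wt x^r_{\bk}\ge\veps$, $\wt x^r_{\bk'-\be_i}\ge\veps$, improvement $\le-\veps$), obtained by a compactness/contradiction argument from the dual characterization (Lemmas~\ref{lem:existence-si-1} and~\ref{lem:existence-si-2}). Combined with a uniform functional LLN for the driving Poisson processes (Lemma~\ref{lem:unif-conv}), this yields a \emph{pathwise, w.p.1} drift estimate (Proposition~\ref{prop:loc-decr}): on every $r^{p-1}$-long subinterval of $[0,Tr^{1-p}]$ on which the state is $\ge C_1r^{p-1}$ from $\cx^*$, $F^r$ drops by at least $\delta r^{2p-1}$, while second-order terms contribute only $O(1)$. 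Since $F^r\ge 0$ and differs from $\sum_{\bk}X_{\bk}$ by $O(r^p)$, choosing $T$ large forces $d(\bx^r(t),\cx^*)=O(r^{p-1})$ by some $t\le Tr^{1-p}$, and a maximum-of-a-sequence argument keeps it there (Proposition~\ref{PROP:CONV-PROB}). The theorem then follows from the one-line observation that in steady state $\bx^r(Tr^{1-p})$ has the same distribution as $\bx^r(\infty)$, so the transient bound \emph{is} the stationary bound. This stationarity trick is the missing idea in your proposal: it converts a finite-horizon drift analysis into a statement about $\bx^r(\infty)$ without ever needing recurrence or tail estimates for the safety stocks.
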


\iffalse
\begin{cor}\label{cor:main-closed}
Let $\bx^r(\infty)$ and $\bx(\infty)$ be the same as in Theorem \ref{thm:main-closed}, 
and let $\bx(\infty)$ be a weak limit point of $\{\bx^r(\infty)\}$, so that 
\[
\bx^r(\infty) \implies \bx(\infty)
\]
along a subsequence of $\{r\}$.  Then
\[
\pr\left(\bx(\infty) \in \cx^*\right) = 1.
\]
\end{cor}
\fi

\begin{thm}
\label{th-main-res-open}
Let $p \in (\frac{1}{2}, 1)$. 
For each $r$, consider the open system operating under {\em GSS-M} policy, in steady state.
{Then there exists some constant $C > 0$, 
not depending on $r$, such that} as $r\to\infty$,
\beql{eq-main-res-open}
\pr\left(d(\bx^r(\infty),\cx^*)\le C r^{p-1}\right) \to 1,
\end{equation}
and
\beql{eq-main-res-open2}
r^{-p} \sum_i \tilde Y_i^r(\infty) \implies  0.
\end{equation}
Consequently, we have fluid-scale asymptotic optimality:
$$
d(\bx^r(\infty),\cx^*) \implies 0 {~~~~\mbox{and}~~~~
r^{-1} \sum_i \tilde Y_i^r(\infty) \implies  0}.
$$
\end{thm}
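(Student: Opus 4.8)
The plan is to transfer the analysis of the closed system to the open system by exploiting the token mechanism, which makes the open system under {\em GSS-M} ``emulate'' a closed system with a (random, slowly-varying) total customer count $Z^r(t)$. First I would set up the complete Markovian description $\{X^r_{(\bk,\hat\bk)}(t)\}$ from Section~\ref{sssec:gss-open}, and record the key structural facts that are essentially free: by Lemma~\ref{lem-complete-state-tight} the chain is positive recurrent for each $r$, so $\bX^r(\infty)$ is well-defined; the processes $(Y_i^r,\hat Y_i^r)$ are independent across $i$ with $\hat Y_i^r(\infty)$ Poisson of mean $\rho_i r$; hence $Z^r(\infty)=\sum_i\hat Y_i^r(\infty)$ concentrates at $r$ (so $Z^r(\infty)/r\implies 1$, and in fact $|Z^r(\infty)-r|=O_p(\sqrt r)$). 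Also $Y_i^r(\infty)\le \hat Y_i^r(\infty)+(\text{tokens})$, and since each token dies at rate $\mu_0$ while being created at rate $\le\mu_i\hat Y_i^r$, a simple birth-death / Little's-law bound gives $\E[\tilde Y_i^r(\infty)]\le \lambda_i r/\mu_0$, so the token counts are $O(r)$ a priori; we will need to bootstrap this to $o(r^p)$.

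The core of the argument is a Lyapunov/drift analysis on the local fluid scale, exactly mirroring the closed-system proof of Theorem~\ref{thm:main-closed} but carried out for the process $\bX^r(t)$ that now counts actual customers {\em and} tokens. I would use the same Lyapunov function $F^r(\bX)=\sum_{\bk}f^r(X_{\bk})$ and the weight functions $\bar w^r(\cdot;Z^r)$; the point of {\em GSS-M} placing each token greedily at the moment of a departure is that every departure–placement pair is ``synchronized'' (as in the Remark), so the first-order change of $F^r$ at each such event is $\le 0$ up to the discretization error, just as in the closed system. The only new feature is that the normalizing constant in $\bar w^r$ is $Z^r(t)^p$ rather than $r^p$; since $Z^r(t)/r\to 1$ uniformly (on the relevant time scale and with high probability), this replacement costs only lower-order terms in all the drift estimates. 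Carrying the closed-system argument through, one obtains that in steady state $F^r(\bX^r(\infty))/r$ is within $O(r^{p-1})$ of the LP optimum $u^*$ with probability $\to 1$; combined with Lemma~\ref{lem:F-sum-close} (the $O(r^p)$ gap between $F^r$ and $\sum_{\bk}X_{\bk}$) and Lemma~\ref{LEM:LP-RATE} (the LP ``error bound'' $D(\sum_{\bk}x_{\bk}-u^*)\ge d(\bx,\cx^*)$), this yields $d(\bx^r(\infty),\cx^*)\le Cr^{p-1}$ w.h.p., which is \eqref{eq-main-res-open}. Note that here $\bx^r$ is built from $\bX^r$ counting customers-plus-tokens; to conclude the same bound for the ``true'' configuration process, I need \eqref{eq-main-res-open2}.

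For \eqref{eq-main-res-open2} — that $r^{-p}\sum_i\tilde Y_i^r(\infty)\implies 0$ — the idea is that tokens can only accumulate at a configuration $\bk$ where the {\em GSS-M} greedy rule keeps ``pushing'' customers in, i.e.\ where $X_{\bk}$ is already at least on the order of $(Z^r)^p$ so that the safety-stock weight has saturated; but the conservation law forces $\sum_{\bk}k_ix_{\bk}^r(\infty)\to\rho_i$, and on the event that $\bx^r(\infty)$ is $O(r^{p-1})$-close to $\cx^*$ the total number of ``extra'' customers beyond a near-optimal packing is $O(r^p)$, which caps the total token count at $o(r)$; refining this, a separate drift estimate on $\sum_i\tilde Y_i^r$ itself — using that tokens leave at rate $\mu_0$ per token and are created at rate $O(r)$ but replaced by arrivals before timeout with probability $\to 1$ when $\bX^r$ is near-optimal — gives $\sum_i\tilde Y_i^r(\infty)=o(r^p)$ w.h.p. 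The fluid-scale statements ($d(\bx^r(\infty),\cx^*)\implies 0$ and $r^{-1}\sum_i\tilde Y_i^r(\infty)\implies 0$) then follow immediately since $r^{p-1}\to 0$. I expect the main obstacle to be making the ``$Z^r(t)\approx r$ on the local fluid time scale'' substitution rigorous {\em simultaneously} with the drift analysis: one must control $F^r$ over a time window long enough for the drift to bite yet short enough that $Z^r(\cdot)$ has not moved appreciably, and handle the coupling between the token dynamics (which feed $Z^r$ indirectly and determine which placements happen) and the configuration dynamics — this is precisely the ``local fluid scaling'' the introduction flags as the new ingredient beyond \cite{St2012}, and getting the two time scales to cooperate is where the real work lies.
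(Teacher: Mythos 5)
Your outline of the drift part of the argument — reuse the closed-system Lyapunov analysis of $F^r$, and absorb the two new error sources (the weight $\bar w^r(X;Z^r)$ versus $w^r(X)$, and the ``non-typical'' transitions) as $o(r^{2p-1})$ perturbations of the $-\delta r^{2p-1}$ drift over each $r^{p-1}$-long window — matches the paper's proof of \eqref{eq-main-res-open}. But your treatment of \eqref{eq-main-res-open2} has a genuine gap, and it is not a cosmetic one, because the drift analysis \emph{consumes} the token bound as an input: token expirations occur at rate $\mu_0\tilde Y_i^r$, so to make their contribution to the change of $F^r$ over an $r^{p-1}$-long interval be $o(r^{2p-1})$ you need $\tilde Y_i^r=o(r^p)$ \emph{uniformly on that interval, before} you know anything about near-optimality of the configuration. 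Your proposed derivation of the token bound goes the other way — ``on the event that $\bx^r(\infty)$ is $O(r^{p-1})$-close to $\cx^*$ \dots'' and ``replaced by arrivals before timeout with probability $\to1$ when $\bX^r$ is near-optimal'' — which makes the argument circular. Moreover the crude balance you cite (creation at rate $O(r)$, death at rate $\mu_0$ per token) only gives $\E\tilde Y_i^r=O(r)$, and the bootstrap you sketch never closes that gap by an independent route.

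The missing idea is that the pair $(\hat Y_i^r(\cdot),\tilde Y_i^r(\cdot))$ is an \emph{autonomous} Markov chain whose law does not depend on the placement decisions at all: arrivals are Poisson of rate $\lambda_i r$ and replace a token iff one exists, actual customers depart at rate $\mu_i\hat Y_i^r$ (each departure spawning a token), and tokens expire at rate $\mu_0\tilde Y_i^r$. Since $\hat Y_i^r$ concentrates at $\rho_i r=\lambda_i r/\mu_i$, the token creation rate $\mu_i\hat Y_i^r$ and the replacement rate $\lambda_i r$ nearly cancel, and the expiration term supplies a restoring drift; the paper exploits this via a generator estimate for $\exp\bigl(r^{-1/2}\sqrt{(\hat y-\rho r)^2+(\mu_0/\mu)\tilde y^2}\bigr)$ (Theorem~\ref{THM:SQRT-R-TIGHTNESS}) to get $\tilde Y_i^r(\infty)=O_p(\sqrt r)$, hence $o(r^q)$ for every $q>1/2$ and in particular \eqref{eq-main-res-open2} unconditionally. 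A second step you also gloss over is upgrading this single-time bound to a bound holding uniformly over the $O(r^{1-p})$-long interval on which the $F^r$-drift is tracked; the paper does this by a local fluid limit for the centered, $r^{-q}$-scaled process $(\hat Y_i^r-\rho_i r,\tilde Y_i^r)$ (Lemma~\ref{lem-local-fluid-conv} and Theorem~\ref{thm:local-fluid-tightness}), whose limit ODE contracts to zero. With those two pieces in place the logical order is: token/actual-customer concentration first (policy-free), then the drift analysis of $F^r$, then \eqref{eq-main-res-open} — the reverse of the order you propose.
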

%\red{COM: Red text in the last line - is it repetitive? NEED IT FOR A SELF-CONTAINED CLAIM
%OF FLUID SCALE OPTIMALITY ENDCOM}

\iffalse
\section{Preliminaries}\label{ssec:prelim}

COM: THIS SECTION TO BE DISBANDED. LP AND FLUID SCALING MOVE UP INTO THE 
ASYMPTOTIC REGIME AND ASYMPTOTIC OPTIMALITY. LOCAL FLUID SCALING MOVES DOWN
TO THE BEGINNING OF THE PROOF FOR THE CLOSED SYSTEM.
\fi

\section{Closed System: Asymptotic \\Optimality of {GSS}}\label{sec-closed}
{We restrict our attention to closed systems and prove Theorem \ref{thm:main-closed} in this section. 
As mentioned earlier, it is not sufficient to consider only the system states 
at the fluid scale, defined in Section \ref{ssec:asymp-regime}. 
We also need the concept of \emph{local fluid scaling}, introduced below. 
Proposition \ref{prop:opt-si-1} -- a key step in the proof of Theorem \ref{thm:main-closed} 
-- is established in Section \ref{ssec:key-prop}. 
In Section \ref{ssec:proof-closed}, we construct an appropriate probability space, 
quantify the drift of $F^r$ under {\em GSS} 
(cf. Propositions \ref{prop:loc-decr} and \ref{PROP:CONV-PROB}), 
and prove Theorem \ref{thm:main-closed}.}

\subsection{Local Fluid Scaling}\label{ssec:loc-fluid-scaling}
Besides the fluid-scaled processes $\bx^r(t)$ defined in Section \ref{ssec:asymp-regime}, 
it is also convenient to consider the system dynamics 
at the {\em local fluid scale}. 
More precisely, for each $r$ and $t$, 
define the corresponding {\em local fluid scale} process $\wt{\bx}^r(t)$ by 
\[
\wt{\bx}^r(t) = \frac{1}{r^p} \bX^r (t).
\]
In the asymptotic regime $r \rightarrow \infty$, 
recall that the fluid scale process $\bx^r(\cdot)$ 
always lives in the compact set $\cx$ (defined in Section \ref{ssec:asymp-regime}). 
This is no longer true for the local fluid scale processes $\wt{\bx}^r(\cdot)$: 
for a fixed $t$, $\{\wt{\bx}^r(t)\}_r$ can be unbounded. 
However, at the local fluid scale, we will always consider 
the following weight function $\wt{w}$, 
which remains bounded. 

Define the local-fluid-scale weight function $\wt{w} : \R \cup \{\infty\} \rightarrow \R_+$ 
to be $\wt{w}(\wt{x}) = 1\wedge \wt{x}$. By convention, $1 < \infty$, 
so $\wt{w}$ is well-defined. 
Note that for every $r$, $\wt{w}(\wt{x}^r) = w^r(X^r)$, 
where $\wt{x}^r = X^r/r^p$. 
For $(\bk, i) \in \cm$, we can also define the weight difference at the local fluid scale to be
\[
\Delta_{(\bk, i)}(\wt{\bx}) = \wt{w}(\wt{x}_{\bk}) - \wt{w}(\wt{x}_{\bk - \be_i}).
\]

\noindent {\bf Remark}. In the sequel, we will always use lower case $x$ (or $\bx$) to denote 
quantities at the fluid scale, $\wt{x}$ (or $\wt{\bx}$) to denote quantities at the local fluid scale, 
and upper case $X$ (or $\bX$) to denote quantities without scaling.

\subsection{Key Proposition}\label{ssec:key-prop}
For a vector $\wt{\bx} \in \left(\R_+ \cup \{\infty\} \right)^{|\ck|}$ 
with components being possibly infinite, 
we can define the concept of a {\em Strictly Improving (SI) pair associated with $\wt{\bx}$}. 

\begin{definition}[Strictly Improving (SI) pair]\label{df:si-pair}
For \\$(\bk, i)$, $(\bk', i) \in \cm$, $\{(\bk, i), (\bk', i)\}$ is an SI pair associated with $\wt{\bx}$ if
\begin{itemize}
\item[(a)] $k_i \geq 1$, $\wt{x}_{\bk} > 0$;
\item[(b)] either $\bk' = \be_i$, or $[k'_i > 0 \mbox{ and } \wt{x}_{\bk' - \be_i} > 0]$; and 
\item[(c)] $\Delta_{(\bk', i)} < \Delta_{(\bk, i)}$.
\end{itemize}
\end{definition}

{
The idea of SI pairs is as follows. 
Suppose that the current system state is $\bX^r$, and a type-$i$ customer 
just completed its service requirement at a server with configuration $\bk$. 
Then the first-order change in $F^r$ is $-\Delta^r_{(\bk, i)}(\bX^r)$.
Suppose that this customer is then placed into a server with configuration $\bk'$, 
under {\em GSS}. 
Then, the total (first-order) change in $F^r$ after this transition is 
$\Delta^r_{(\bk', i)}(\bX^r) - \Delta^r_{(\bk, i)}(\bX^r)$, or 
$\Delta_{(\bk', i)}(\wt{\bx}^r) - \Delta_{(\bk, i)}(\wt{\bx}^r)$. 
The existence of an SI pair ensures that we can always improve 
(up to first order)
the current value of $F^r$.}

%\red{COM: intuition for SI pair. ENDCOM}

Recall that for any feasible system state $\bX^r$, $\bx^r = \bX^r/r$ denotes the 
fluid-scale system state, and $\wt{\bx}^r = \bX^r/r^p$ denotes the associated state 
at the local fluid scale. 
The following proposition establishes that whenever $\bx^r$ is sufficiently far away from 
optimality, an SI pair exists.

\begin{prop}\label{prop:opt-si-1}
Let $D > 0$ be the same as in Lemma \ref{LEM:LP-RATE}.
Then, there exist a positive constant $\varepsilon$ such that the following holds. 
For sufficiently large $r$, if 
$d(\bx^r, \cx^*) \ge 2D|\ck|r^{p-1}$, then there exists an SI pair $\{(\bk', i), (\bk, i)\}$ 
(possibly depending on $r$) associated with $\wt{\bx}^r = (\wt{x}_{\bk}^r)_{\bk \in \ck}$, 
and furthermore, $\wt{x}_{\bk}^r \geq \varepsilon$, $\wt{x}_{\bk' - \be_i}^r \ge \veps$, 
and $\Delta_{(\bk', i)}(\wt{\bx}^r) - \Delta_{(\bk, i)}(\wt{\bx}^r) \leq -\varepsilon$.
\end{prop}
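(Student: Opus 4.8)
The plan is to argue by contradiction at the fluid scale, but to carry enough quantitative information so that the conclusion can be stated in terms of the local-fluid-scale variables $\wt{x}^r$. First I would fix $r$ large and suppose $d(\bx^r,\cx^*)\ge 2D|\ck|r^{p-1}$. By Lemma~\ref{LEM:LP-RATE} this forces $\sum_{\bk}x^r_{\bk}-u^*\ge 2|\ck|r^{p-1}$, i.e.\ the fluid state is strictly suboptimal by a definite margin. The dual characterization of Lemma~\ref{lem:lp-dual-char} then says no choice of multipliers $\{\eta_i\}$ can certify optimality of $\bx^r$; equivalently, for \emph{every} candidate $\{\eta_i\}$ satisfying $\sum_i k_i\eta_i\le 1$ on $\ck$, there is some $\bk$ with $x^r_{\bk}>0$ and $\sum_i k_i\eta_i<1$. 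I would like to convert this into the statement that one can always ``move mass'' from a configuration $\bk$ that is occupied on the $r$-scale down to a cheaper reinsertion route $\bk'$. Concretely, the candidate multipliers to plug in are the ``marginal weights'' induced by $\wt{w}$ evaluated at $\wt{\bx}^r$: since $\Delta_{(\bk,i)}(\wt{\bx}^r)=\wt{w}(\wt{x}^r_{\bk})-\wt{w}(\wt{x}^r_{\bk-\be_i})$ telescopes along a chain $\bZero\to\be_i\to\cdots\to\bk$, one gets $\sum_{\text{chain}}\Delta=\wt{w}(\wt{x}^r_{\bk})\le 1$, which plays the role of the dual constraint $\sum_i k_i\eta_i\le 1$. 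The failure of optimality then produces a configuration $\bk$ that is ``heavy'' ($\wt{x}^r_{\bk}$ not too small) but for which some alternative insertion path has strictly smaller total marginal cost.

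The second step is to extract from this a genuine SI \emph{pair} $\{(\bk',i),(\bk,i)\}$ with all the uniformity built in. Here I would argue that among the finitely many configurations, if $\bx^r$ is $\Omega(r^{p-1})$-far from $\cx^*$ then there must be a type $i$ and a configuration $\bk$ with $k_i\ge 1$ whose occupancy is macroscopic, $x^r_{\bk}\ge c\,r^{-1}\cdot(\text{something})$ — more precisely $\wt{x}^r_{\bk}\ge\varepsilon$ — such that the greedy reinsertion route for a departing type-$i$ customer from $\bk$ leads to some $\bk'$ with $\Delta_{(\bk',i)}(\wt{\bx}^r)<\Delta_{(\bk,i)}(\wt{\bx}^r)$, and moreover $\wt{x}^r_{\bk'-\be_i}\ge\varepsilon$ so that condition (b) of Definition~\ref{df:si-pair} holds on the local scale. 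The existence of $\varepsilon$ uniform in $r$ is a compactness argument: the set $\cx$ is compact, the map $\bx\mapsto d(\bx,\cx^*)$ is continuous, and the ``defect'' functional measuring how badly the dual conditions fail is continuous in the relevant (bounded, after applying $\wt{w}$) variables; hence on the closed region $\{d(\bx,\cx^*)\ge\delta\}$ one gets a strictly positive lower bound $\varepsilon=\varepsilon(\delta)$ on both the occupancies involved and on the gap $\Delta_{(\bk,i)}-\Delta_{(\bk',i)}$. Finally, I would need to reconcile the fluid-scale hypothesis $d(\bx^r,\cx^*)\ge 2D|\ck|r^{p-1}$ with conclusions about $\wt{x}^r=(r/r^p)\bx^r=r^{1-p}\bx^r$: a configuration with $x^r_{\bk}$ bounded below by a constant automatically has $\wt{x}^r_{\bk}\to\infty$, and the subtle point is ruling out the case where the entire ``defect'' is carried by configurations whose $x^r_{\bk}$ is itself only of order $r^{p-1}$ (so that $\wt{x}^r_{\bk}=\Theta(1)$, not $\to\infty$) — but that is exactly the regime the hypothesis $d(\bx^r,\cx^*)\ge 2D|\ck|r^{p-1}$, via the factor $|\ck|$, is designed to exclude, since the total such defect across all $|\ck|$ configurations would then be $o(r^{p-1}\cdot|\ck|)$ in fluid terms.

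The main obstacle I anticipate is precisely this last reconciliation: one is proving a statement about the \emph{local} fluid scale ($\wt{x}^r_{\bk}\ge\varepsilon$, gap $\le-\varepsilon$) from a hypothesis phrased at the \emph{global} fluid scale, and the weight function $\wt{w}=1\wedge\wt{x}$ saturates at $1$, so all configurations with $\wt{x}^r_{\bk}\ge 1$ look identical to the $\Delta$'s. The argument must therefore locate the SI pair among configurations that are ``moderately loaded'' — large on the local scale but where the weight is not yet saturated, or where the comparison configuration $\bk'-\be_i$ is small — and show such a pair always exists when the global defect is $\Omega(r^{p-1})$. I would handle this by a careful case analysis on whether the suboptimality is due to a heavy configuration $\bk$ with $\sum_i k_i\eta^r_i<1$ (where $\eta^r$ are the induced marginal multipliers) — in which case the greedy route from $\bk$ strictly improves — versus degeneracies where ties must be broken; throughout, the constant $2D|\ck|$ and the choice $p>1/2$ give exactly the slack needed to absorb the $O(r^p)$ discrepancy between $F^r$ and $\sum_{\bk}X_{\bk}$ (Lemma~\ref{lem:F-sum-close}) and the $|\ck|$ configurations' worth of boundary terms.
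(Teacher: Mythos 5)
Your overall strategy is aligned with the paper's: the dual multipliers are indeed taken to be the marginal weights $\eta_i=\wt{w}(\wt{x}_{\be_i})$, the telescoping of $\Delta$'s along chains is the mechanism that turns ``no SI pair'' into the dual feasibility/complementary-slackness conditions of Lemma \ref{lem:lp-dual-char}, and the final step is a subsequence extraction. But there are two concrete gaps. First, your uniformity argument --- compactness ``on the closed region $\{d(\bx,\cx^*)\ge\delta\}$'' yielding $\varepsilon=\varepsilon(\delta)$ --- does not apply, because the threshold in the hypothesis is $2D|\ck|r^{p-1}\to 0$: a sequence $\bx^r$ satisfying the hypothesis can perfectly well converge to a point of $\cx^*$, so the relevant states are not confined to any region bounded away from the optimal set, and $\varepsilon(\delta)$ with $\delta=2D|\ck|r^{p-1}$ would degenerate. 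The compactification has to be performed jointly on $(\bx^r,\wt{\bx}^r)$ with $\wt{\bx}$ valued in $(\R_+\cup\{\infty\})^{|\ck|}$, and the SI pair (with its $\varepsilon$-quantitative properties) must be produced for the limit $\wt{\bx}$, not for the fluid limit $\bx$; the uniform $\varepsilon$ then comes out of a contradiction argument with thresholds $1/n$, not from a modulus of continuity over $\{d\ge\delta\}$.

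Second, and more importantly, the step that actually forces an SI pair in the critical regime --- where $\bx^{r}\to\bx\in\cx^*$ but $d(\bx^r,\cx^*)\ge 2D|\ck|r^{p-1}$ --- is only asserted (``the factor $|\ck|$ is designed to exclude \dots''), not proved. The paper's mechanism is a Lagrangian evaluation for the rescaled program with constraints $\sum_{\bk}k_i\wt{x}_{\bk}=\rho_i r^{1-p}$: if no SI pair exists for $\wt{\bx}$, then $\sum_i k_i\eta_i\le 1$ for all $\bk$ and, crucially, $\wt{x}_{\bk}<1$ whenever the inequality is strict (a strengthening of ordinary complementary slackness), which bounds the duality-gap term by $|\ck|$ and yields $\sum_{\bk}\wt{x}^{r}_{\bk}\le r^{1-p}u^*+|\ck|$; this contradicts the lower bound $\sum_{\bk}\wt{x}^{r}_{\bk}\ge r^{1-p}u^*+2|\ck|$ that Lemma \ref{LEM:LP-RATE} extracts from the distance hypothesis. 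Without this estimate your argument does not close. Note also that your telescoping identity only gives $\sum_{\text{chain}}\Delta=\wt{w}(\wt{x}_{\bk})$; identifying this sum with $\sum_i k_i\eta_i$ requires showing each link's $\Delta_{(\bk,i)}$ equals $\eta_i$, which is exactly where the no-SI-pair hypothesis enters (via an induction on minimal counterexamples producing an SI pair from any discrepancy $\Delta_{(\bk,i)}\ne\eta_i$). Finally, Lemma \ref{lem:F-sum-close} and the condition $p>1/2$ play no role in this proposition; invoking them here is a misattribution.
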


Proposition \ref{prop:opt-si-1} follows from the two lemmas below. 
\iffalse
First, recall the linear program LP defined in Section \ref{ssec:asymp-regime}: 
\begin{eqnarray}
\mbox{Minimize} & & \sum_{\bk\in \ck} x_{\bk} \\
\mbox{subject to} & & \sum_{\bk \in \ck} k_i x_{\bk} = \rho_i, \quad \mbox{for all } i \in \ci, \\
			    & & \quad \quad \ \ x_{\bk} \geq 0, \quad \mbox{ for all } \bk \in \ck.
\end{eqnarray}
Also recall that $\cx = \{\bx \geq 0 : \sum_{\bk \in \ck} k_i x_{\bk} = \rho_i, \forall i \in \ci\}$ 
denotes the feasible set, $\cx^*$ denotes the set of optimal solutions, and $u^*$ denotes the optimal value. We remark that both $\cx$ and $\cx^*$ are compact and convex polytopes 
with a finite number of extreme points. 

Consider a sequence of fluid scaled states $\bx^r$ and associated states $\wt{\bx}^r$ 
at the local fluid scale. 
Since $\bx^r \in \cx$ for every $r$, and $\cx$ is compact, the sequence $\{\bx^r\}$ 
has a limit point $\bx \in \cx$, i.e., 
there exists a subsequence $\{r_n\}$ of $\{r\}$ such that 
$\bx^{r_n} \rightarrow \bx$ as $n \rightarrow \infty$. Furthermore, 
for this subsequence, we can assume that $\wt{\bx}^{r_n} \rightarrow \wt{\bx}$, 
with either $\wt{x}_{\bk} \in \R_+$ or $\wt{x}_{\bk} = \infty$. 
\fi

\begin{lem}\label{lem:existence-si-1}
Consider any sequence $\{\bx^r\}$ and the associated states $\wt{\bx}^r$. 
Let $\bx \in \cx$ be a limit point of the sequence $\{\bx^r\}$, 
so that the the subsequence $\{r_n\}$ of $\{r\}$ satisfies 
$\bx^{r_n} \rightarrow \bx$ and 
$\wt{\bx}^{r_n} \rightarrow \wt{\bx}$ as $n \rightarrow \infty$, 
with some components of $\wt{\bx}$ being possibly infinite. 
If there is no SI pair associated with $\wt{\bx}$, 
then $\bx \in \cx^*$, i.e. $\bx$ is an optimal solution of LP.
\end{lem}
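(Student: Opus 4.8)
The plan is to argue by contraposition: assume there is no SI pair associated with $\wt{\bx}$, and show $\bx \in \cx^*$ by exhibiting dual multipliers $\eta_i$ that satisfy conditions (i) and (ii) of Lemma \ref{lem:lp-dual-char}. First I would introduce, for each type $i$, the quantity $\delta_i = \min \{ \Delta_{(\bk,i)}(\wt{\bx}) : (\bk,i) \in \cm, \ k_i \ge 1, \ \wt x_{\bk} > 0 \} \wedge \wt w(\wt x_{\be_i})$, i.e., the best (smallest) first-order marginal change achievable when a type-$i$ customer needs to be (re)placed; the term $\wt w(\wt x_{\be_i})$ covers the option of opening a fresh server $\be_i$, for which the "removal side" weight is $\wt w(\wt x_{\bZero}) = \wt w(0) = 0$, so the marginal is just $\wt w(\wt x_{\be_i})$. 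The no-SI-pair hypothesis is precisely the statement that $\delta_i$ is attained simultaneously at \emph{every} admissible "source" configuration $\bk$ with $k_i \ge 1$ and $\wt x_{\bk} > 0$: if some admissible source had a strictly larger $\Delta_{(\bk,i)}$ than the minimizing target $\bk'$, that would be an SI pair. So under the hypothesis, for each $i$, $\Delta_{(\bk,i)}(\wt{\bx}) = \delta_i$ for all $\bk$ with $k_i \ge 1$ and $\wt x_{\bk} > 0$, and $\Delta_{(\bk,i)}(\wt{\bx}) \ge \delta_i$ for all other admissible targets.

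Next I would try the telescoping/path argument that is standard for this kind of LP. The key identity is that $\Delta_{(\bk,i)}(\wt{\bx}) = \wt w(\wt x_{\bk}) - \wt w(\wt x_{\bk - \be_i})$, so summing along a lattice path from $\bZero$ to any $\bk \in \ck$ (adding one unit vector at a time, staying inside $\bar\ck$ by monotonicity), the $\wt w$ terms telescope: $\wt w(\wt x_{\bk}) = \sum_{j} \Delta_{(\bk^{(j)}, i_j)}(\wt{\bx})$ where the sum is over the $\|\bk\|$ steps of the path and $i_j$ is the coordinate incremented at step $j$. The aim is to define $\eta_i := \delta_i$ and show $\sum_i k_i \eta_i \le 1$ for all $\bk$, with equality forced whenever $\wt x_{\bk} > 0$ (this gives (i) and the contrapositive of (ii): $x_{\bk} > 0 \Rightarrow \wt x_{\bk} > 0 \Rightarrow$ equality $\Rightarrow$ complementary slackness is tight). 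For the equality-on-support direction: if $\wt x_{\bk} > 0$, choose a path in which every step $j$ increments a coordinate $i_j$ at a configuration $\bk^{(j+1)}$ that \emph{also} has positive $\wt x$-coordinate — one can do this by peeling off coordinates of $\bk$ one at a time in an order so that all intermediate configurations are $\le \bk$ and have positive $\wt x$ (this needs an argument, see below). Then each step contributes exactly $\delta_{i_j} = \eta_{i_j}$, and $\wt w(\wt x_{\bk}) = \sum_i k_i \eta_i$; since $\wt x_{\bk} > 0$ but actually we need $\wt w(\wt x_{\bk}) = 1$, which follows because... hmm, this is where I'd need care: $\wt w(\wt x_{\bk}) \le 1$ always, so I get $\sum_i k_i \eta_i \le 1$ on the support, and for the reverse I would use that the fluid-scale constraint and optimality-gap considerations force the relevant coordinates to be "large" ($\wt x_{\bk} \ge 1$, i.e. $\wt w = 1$) — more precisely I'd use that $\bx \in \cx$ has $\sum_{\bk} k_i x_{\bk} = \rho_i > 0$, so for each $i$ some $x_{\bk} > 0$ with $k_i \ge 1$, hence the corresponding $\wt x_{\bk} = \infty$ (since $x^{r_n}_{\bk} \to x_{\bk} > 0$ forces $\wt x^{r_n}_{\bk} = X^{r_n}_{\bk}/r_n^p \to \infty$), so $\wt w = 1$ there; propagating along paths then pins down the $\eta_i$ and the value $1$.

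For the inequality $\sum_i k_i \eta_i \le 1$ at an \emph{arbitrary} $\bk \in \ck$ (not necessarily on the support): pick any lattice path from $\bZero$ to $\bk$; at each step $j$ the target configuration $\bk^{(j+1)}$ is an admissible target for a type-$i_j$ placement, so $\Delta_{(\bk^{(j+1)}, i_j)}(\wt{\bx}) \ge \delta_{i_j} = \eta_{i_j}$, giving $\wt w(\wt x_{\bk}) = \sum_j \Delta_{(\bk^{(j)},i_j)} \ge \sum_i k_i \eta_i$, and since $\wt w(\wt x_{\bk}) \le 1$ we are done — \emph{provided} each intermediate target is genuinely admissible, i.e. its "source" coordinate is positive or is $\bZero$. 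That admissibility is the crux. The main obstacle I anticipate is exactly this bookkeeping: justifying that one can route lattice paths so that (a) for the support inequality all intermediate configurations have strictly positive $\wt x$-coordinates, and (b) for the general upper bound all intermediate \emph{source} configurations either are $\bZero$ or have positive $\wt x$ — and handling configurations $\bk$ where some coordinate $\bk - \be_i$ has $\wt x_{\bk - \be_i} = 0$ (so that placement option is simply unavailable and must be excluded from the $\min$ defining $\delta_i$, which is why condition (b) in Definition \ref{df:si-pair} has the disjunction). I would resolve this by a careful induction on $\|\bk\|$, using monotonicity of $\bar\ck$ to guarantee all sub-configurations are feasible, and using the structure of $\cx$ (every $\rho_i > 0$) to seed the induction with "full" coordinates. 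Once the $\eta_i$ are shown to satisfy (i) and (ii), Lemma \ref{lem:lp-dual-char} gives $\bx \in \cx^*$, completing the proof.
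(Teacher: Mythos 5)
Your high-level strategy is the same as the paper's: construct dual multipliers from the local-fluid weights, verify them against Lemma \ref{lem:lp-dual-char}, and let an induction on $\|\bk\|$ do the work. But two steps are genuinely problematic, and the first is your reading of the no-SI-pair hypothesis. In Definition \ref{df:si-pair}, the condition ``$k_i\ge 1$ and $\wt x_{\bk}>0$'' is the \emph{source} condition (a), whereas admissibility of a \emph{target} $(\bk',i)$ is condition (b): $\bk'=\be_i$ or $\wt x_{\bk'-\be_i}>0$. Your $\delta_i$ minimizes over configurations satisfying (a), i.e.\ over sources. The absence of SI pairs says only that every source's marginal is bounded above by every admissible target's marginal; it does \emph{not} force all sources to share a common value. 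A configuration $\bk$ with $\wt x_{\bk}>0$ but $\wt x_{\bk-\be_i}=0$ (and $\bk\ne\be_i$) is a source that is not an admissible target, and nothing in the hypothesis prevents its marginal $\Delta_{(\bk,i)}=\wt w(\wt x_{\bk})$ from lying strictly below the value shared by the source-and-target configurations. So ``$\Delta_{(\bk,i)}=\delta_i$ for every source'' is not a consequence of the hypothesis, and a minimum over sources is the wrong dual variable; the correct choice is $\eta_i=\wt w(\wt x_{\be_i})=\Delta_{(\be_i,i)}$, the marginal of the always-admissible target $\be_i$. If $\delta_i<\eta_i$ for some $i$ appearing in a support configuration, your telescoping yields $\sum_i k_i\delta_i<1$ there and complementary slackness fails, so the equalities your plan needs do not follow from the definition as you have set it up.

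The second problem is the one you yourself call the crux: guaranteeing that every intermediate configuration on the lattice path is an admissible target (for $\sum_i k_i\eta_i\le\wt w(\wt x_{\bk})$) and both a source and a target (for equality on the support). This cannot be arranged by a clever \emph{a priori} choice of path: for a general $\bk$ some $\bk-\be_i$ may have $\wt x_{\bk-\be_i}=0$, and then the step marginal $\wt w(\wt x_{\bk})-0$ need not dominate $\eta_i$. The way to close it is to prove the exact identity $\sum_i k_i\eta_i=\wt w(\wt x_{\bk})$ by induction on $\|\bk\|$ \emph{restricted to those $\bk$ for which $k_i\ge 1$ implies $\eta_i>0$}: the identity at the previous step $\bk'=\bk-\be_i$ itself gives $\wt w(\wt x_{\bk'})\ge\eta_{i'}>0$ for some $i'$ with $k'_{i'}\ge 1$, so positivity of the intermediate configurations is a conclusion of the induction rather than a property of a chosen path; a minimal counterexample produces an SI pair having $(\be_i,i)$ as one of its two members. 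Condition (i) for arbitrary $\bk$ then follows by deleting the coordinates with $\eta_i=0$, and condition (ii) needs a separate minimal-counterexample argument precisely for configurations containing a type with $\eta_i=0$ --- a case your plan does not treat: observing that $\wt x_{\bk}=\infty$ on the support gives $\wt w(\wt x_{\bk})=1$, but without the restricted identity this does not by itself yield $\sum_i k_i\eta_i=1$ there.
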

\begin{proof}[of Lemma \ref{lem:existence-si-1}]
Suppose that there is no SI pair associated with $\wt{\bx}$. 
We will show that $\bx \in \cx^*$, i.e., $\bx$ is an optimal solution of the linear program LP. 
To this end, we will use Lemma \ref{lem:lp-dual-char}. 
In particular, we will construct $\eta_i \geq 0$, $i \in \ci$ such that 
\begin{itemize}
\item[(i)] $\sum_{i \in \ci} k_i \eta_i \leq 1$ for all $\bk \in \ck$, and 
\item[(ii)] if $\sum_{i \in \ci} k_i \eta_i < 1$, then $\wt{x}_{\bk} < 1$. 
\end{itemize}
Note that condition (ii) here is stronger than condition (ii) in Lemma \ref{lem:lp-dual-char}.

Let $\eta_i = \wt{w}(\wt{x}_{\be_i})$ for all $i \in \ci$. Then clearly $\eta_i \in [0, 1]$ for all $i \in \ci$. 
We first show that condition (i) holds. To this end, we prove the following stronger statement: 
if $\bk \in \ck$ is such that $k_i \ge 1$ implies $\eta_i > 0$, 
then $\sum_{i \in \ci} k_i \eta_i = \wt{w}(\wt{x}_{\bk})$. 
Suppose not. Let $\bk \in \ck$ be a minimal counterexample, 
so that 
\begin{equation}\label{eq:si-exist-1}
\sum_{i \in \ci} k_i \eta_i \neq \wt{w}(\wt{x}_{\bk}), 
\end{equation}
and for each $i \in \ci$, $k_i \ge 1$ implies $\eta_i > 0$.  
Note that $\sum_{i \in \ci} k_i \ge 2$, since $\eta_i = \wt{w}(\wt{x}_{\be_i})$ 
for each $i \in \ci$, by definition. Thus, there exists $i \in \ci$ 
such that $\eta_i > 0$, $\bk' = \bk - \be_i \in \ck$, and 
\begin{equation}\label{eq:si-exist-2}
\sum_{i \in \ci} k'_i \eta_i = \wt{w}(\wt{x}_{\bk'}).
\end{equation}

Subtracting Eq. \eqref{eq:si-exist-2} from Eq. \eqref{eq:si-exist-1}, 
we get that
\[
\Delta_{(\bk, i)} = \wt{w}(\wt{x}_{\bk}) - \wt{w}(\wt{x}_{\bk'}) \neq \eta_i. 
\]
Thus either $\Delta_{(\bk, i)} > \eta_i$, or $\Delta_{(\bk, i)} < \eta_i$. 
If $\Delta_{(\bk, i)} > \eta_i$, we verify that 
$\{(\bk, i), (\be_i, i)\}$ is an SI pair associated with $\wt{\bx}$. 
First, conditions (b) and (c) in Definition \ref{df:si-pair} are automatically satisfied. 
Second, $\Delta_{(\bk, i)} > \eta_i > 0$. 
In particular, $\wt{x}_{\bk} > 0$. We also have $k_i \geq 1$, so 
condition (a) in Definition \ref{df:si-pair} is also satisfied. 

If $\Delta_{(\bk, i)} < \eta_i$, we verify that 
$\{(\be_i, i), (\bk, i)\}$ is an SI pair associated with $\wt{\bx}$. 
First, condition (c) in Definition \ref{df:si-pair} is automatically satisfied. 
Second, since $\eta_i > 0$, $\wt{x}_{\be_i} > 0$. 
Thus condition (a) in Definition \ref{df:si-pair} is satisfied. 
Finally, $k_i \ge 1$ by assumption, so to verify condition (b), 
we only need to verify that $\wt{x}_{\bk - \be_i} > 0$. 
Since $\sum_{i \in \ci} k_i \geq 2$, 
$\sum_{i \in \ci} k'_i \geq 1$. 
This implies that there exists $i' \in \ci$ such that $k'_{i'} \ge 1$. 
Thus $k_{i'} \ge k'_{i'} \ge 1$, so $\eta_{i'} > 0$. 
By Eq. \eqref{eq:si-exist-2}, 
$\wt{w}(\wt{x}_{\bk'}) \ge \eta_{i'} > 0$, 
so $\wt{x}_{\bk'} > 0$.
Thus, condition (b) in Definition \ref{df:si-pair} 
is verified. 

In either case, we have an SI pair associated with $\wt{\bx}$, 
contradicting the assumption that there is no SI pair associated with $\wt{\bx}$. 
Thus, for all $\bk \in \ck$ such that $k_i \ge 1$ implies $\eta_i > 0$, 
\[
\sum_{i \in \ci} k_i \eta_i = \wt{w}(\wt{x}_{\bk}).
\]
For all $\bk \in \ck$, we can find $\bk' \le \bk$ such that 
$\bk' \in \ck$, $k'_i \ge 1$ implies $\eta_i > 0$, 
and $\sum_{i \in \ci} k_i \eta_i = \sum_{i \in \ci} k'_i \eta_i$. Thus, 
\[
\sum_{i \in \ci} k_i \eta_i = \sum_{i \in \ci} k'_i \eta_i = \wt{w}(\wt{x}_{\bk'}) \leq 1.
\]
This establishes condition (i). 

We now establish condition (ii). Suppose that condition (ii) does not hold. 
Let $\bk \in \ck$ be minimal such that 
\[
\wt{x}_{\bk} \geq 1,  ~~~\mbox{ and }~~~ \sum_{i \in \ci} k_i \eta_i < 1. 
\]
First, note that $\bk \neq \be_i$ for any $i \in \ci$, 
because if $\eta_i < 1$, then
\[
1 > \eta_i = \wt{w}(\wt{x}_{\be_i}) = 1 \wedge \wt{x}_{\be_i}.
\]
Thus $\sum_{i \in \ci} k_i \geq 2$.
Second, if $\eta_i > 0$ for all $i \in \ci$ with $k_i \geq 1$, 
then from the proof of condition (i), we have that 
\[
1 > \sum_{i \in \ci} k_i \eta_i = \wt{w}(\wt{x}_{\bk}) = 1 \wedge \wt{x}_{\bk},
\]
so we have $\wt{x}_{\bk} < 1$, reaching a contradiction. 
Thus, there exists $i \in \ci$ such that $\eta_i = 0$ and $k_i \geq 1$. 
Let $\bk' = \bk - \be_i$. Then $\bk' \in \ck$, since 
\[
\sum_{i \in \ci} k'_i = \sum_{i \in \ci} k_i - 1 \geq 1.
\]
Since $\eta_i = 0$, 
\[
\sum_{i \in \ci} k'_i \eta_i = \sum_{i \in \ci} k_i \eta_i < 1.
\] 
By minimality of $\bk$, we must have $\wt{x}_{\bk'} < 1$. 
Thus, $\wt{w}(\wt{x}_{\bk'}) = 1 \wedge \wt{x}_{\bk'} < 1$, 
and $\wt{w}(\wt{x}_{\bk}) = 1 \wedge \wt{x}_{\bk} = 1$. 
This implies that 
\[
\Delta_{(\bk, i)} > 0 = \eta_i,
\]
and that $\{(\bk, i), (\be_i, i)\}$ is an SI pair associated with $\wt{\bx}$. 
This is a contradiction, so condition (ii) is established. 
\end{proof}

\begin{lem}\label{lem:existence-si-2}
Consider any sequence $\{\bx^r\}$ and associated states $\wt{\bx}^r$. 
Let $\bx^{r_n}$, $\bx$, $\wt{\bx}^{r_n}$ and $\wt{\bx}$ 
be the same as in Lemma \ref{lem:existence-si-1}. 
If for all sufficiently large $n$, 
$d(\bx^{r_n}, \cx^*) \ge 2D|\ck|r_n^{p-1}$, 
then there is an SI pair associated with $\wt{\bx}$.
\end{lem}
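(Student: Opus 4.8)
The plan is to argue by contradiction: suppose that no SI pair is associated with $\wt{\bx}$, and contradict the hypothesis $d(\bx^{r_n},\cx^*) \ge 2D|\ck| r_n^{p-1}$ for all large $n$. It is worth stressing that one cannot simply quote Lemma~\ref{lem:existence-si-1} here — that would only give $\bx \in \cx^*$, which is entirely compatible with $\bx^{r_n}$ lying at distance $\Theta(r_n^{p-1})$ from $\cx^*$. What I will use instead is the \emph{construction} carried out inside the proof of Lemma~\ref{lem:existence-si-1}: the absence of an SI pair associated with $\wt{\bx}$ produces multipliers $\eta_i = \wt{w}(\wt{x}_{\be_i}) \in [0,1]$, $i \in \ci$, which are dual feasible, i.e. $\sum_{i} k_i \eta_i \le 1$ for all $\bk \in \ck$, and which satisfy the strengthened complementarity $\sum_i k_i \eta_i < 1 \Rightarrow \wt{x}_{\bk} < 1$; it also yields $\bx \in \cx^*$, hence $u^* = \sum_{\bk} x_{\bk}$.

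Next I would localize the excess mass. Let $\ck_0 = \{\bk \in \ck : \sum_i k_i \eta_i < 1\}$ be the set of ``slack'' configurations. For every $\bk \in \ck_0$ the strengthened complementarity gives $\wt{x}_{\bk} < 1$ (in particular $\wt x_{\bk}$ is finite), and since $\wt{\bx}^{r_n} \to \wt{\bx}$ componentwise and $\ck_0$ is finite, for all large $n$ we have $\wt{x}^{r_n}_{\bk} < 1$ for all $\bk \in \ck_0$ simultaneously, i.e. $X^{r_n}_{\bk} < r_n^{p}$, i.e. $x^{r_n}_{\bk} < r_n^{p-1}$. This is precisely the step for which the \emph{local} fluid scale is indispensable: the fluid-scale limit $\bx$ says nothing about $X^{r_n}_{\bk}/r_n^{p}$, so one must route the argument through the convergence $\wt{\bx}^{r_n} \to \wt{\bx}$, and the payoff is a genuinely sublinear, rate-$r_n^{p-1}$ bound on the mass sitting at slack configurations.

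It then remains to convert this into a bound on the LP objective. Since $\bx$ and every $\bx^{r_n}$ are feasible, $\sum_{\bk} k_i (x^{r_n}_{\bk} - x_{\bk}) = 0$ for each $i$, so, subtracting $\sum_i \eta_i \sum_{\bk} k_i(x^{r_n}_{\bk} - x_{\bk}) = 0$ from $\sum_{\bk}(x^{r_n}_{\bk}-x_{\bk}) = \sum_{\bk}x^{r_n}_{\bk} - u^*$,
\[
\sum_{\bk} x^{r_n}_{\bk} - u^* \;=\; \sum_{\bk}\bigl(x^{r_n}_{\bk}-x_{\bk}\bigr)\Bigl(1 - \textstyle\sum_i k_i\eta_i\Bigr) \;=\; \sum_{\bk \in \ck_0}\bigl(x^{r_n}_{\bk}-x_{\bk}\bigr)\Bigl(1 - \textstyle\sum_i k_i\eta_i\Bigr) \;\le\; \sum_{\bk \in \ck_0} x^{r_n}_{\bk},
\]
the middle equality because $\sum_i k_i\eta_i = 1$ for $\bk \notin \ck_0$ (dual feasibility), the last inequality because $x_{\bk}\ge 0$ and $0 \le 1-\sum_i k_i\eta_i \le 1$ on $\ck_0$. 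With the previous step this gives $\sum_{\bk}x^{r_n}_{\bk} - u^* < |\ck| r_n^{p-1}$ for all large $n$, and then Lemma~\ref{LEM:LP-RATE} yields $d(\bx^{r_n},\cx^*) \le D\bigl(\sum_{\bk}x^{r_n}_{\bk}-u^*\bigr) < D|\ck| r_n^{p-1} < 2D|\ck| r_n^{p-1}$, the desired contradiction, so an SI pair must be associated with $\wt{\bx}$. The only genuine obstacle here is conceptual rather than computational: recognizing that the claim is strictly stronger than Lemma~\ref{lem:existence-si-1} and hence needs the strict-complementarity byproduct of its proof together with the local-scale convergence, not merely the fluid-scale limit; once those are in hand, the rest is routine manipulation of the LP duality identities.
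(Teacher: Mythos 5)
Your proof is correct and follows essentially the same route as the paper's: you extract the dual multipliers $\eta_i=\wt w(\wt x_{\be_i})$, dual feasibility, and the strengthened complementarity $\sum_i k_i\eta_i<1\Rightarrow\wt x_{\bk}<1$ from the proof of Lemma \ref{lem:existence-si-1}, and then combine the complementary-slackness identity with Lemma \ref{LEM:LP-RATE} to contradict the distance hypothesis. The only difference is cosmetic: the paper phrases the same computation as a Lagrangian evaluation for a rescaled program $\mbox{LP}^r$ at the local fluid scale and contradicts a lower bound $2|\ck|$ on the scaled objective excess, whereas you work directly at the fluid scale and contradict the distance bound; the two are the same argument up to a factor of $r^{1-p}$ and the order in which Lemma \ref{LEM:LP-RATE} is invoked.
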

\begin{proof}[of Lemma \ref{lem:existence-si-2}]
We prove the lemma by contradiction. 
Suppose that the lemma is not true, 
then for sufficiently large $n$, $d(\bx^{r_n}, \cx^*) \ge 2D |\ck| r_n^{p-1}$, 
and there is no SI pair associated with $\wt{\bx}$. 
By Lemma \ref{lem:existence-si-1}, $\bx$ 
is an optimal solution of LP, and 
from the proof of Lemma \ref{lem:existence-si-1}, $\boldeta = (\eta_i)_{i \in \ci}$ 
is an optimal dual solution of LP, where $\eta_i = \wt{x}_{\be_i}$ 
for all $i \in \ci$. 

For a given $r$, consider the following linear program, 
which we call $\mbox{LP}^r$. 
\begin{eqnarray}
\mbox{Minimize} & & \sum_{\bk\in \ck} \wt{x}_{\bk} \\
\mbox{subject to} & & \sum_{\bk \in \ck} k_i \wt{x}_{\bk} = \rho_i r^{1-p}, \quad \mbox{for all } i \in \ci, \\
			    & & \quad \quad \ \ \wt{x}_{\bk} \geq 0, \quad \quad \quad \mbox{ for all } \bk \in \ck.
\end{eqnarray}
$\mbox{LP}^r$ is just a scaled version of LP, defined in Section \ref{ssec:asymp-regime}. 
For each $r$, the feasible set of $\mbox{LP}^r$ is $r^{1-p}\cx$, 
its set of optimal solutions is $r^{1-p}\cx^*$, and 
its optimal value is $r^{1-p} u^*$. 
$r^{1-p}\bx$ is an optimal solution of $\mbox{LP}^r$, 
and $\boldeta$ is an optimal dual solution. 
Furthermore, by Lemma \ref{LEM:LP-RATE}, 
for sufficiently large $n$, %COM: TYPO IN THIRD LINE BELOW CORRECTED
\begin{eqnarray*}
\sum_{\bk \in \ck} \wt{x}^{r_n}_{\bk} - r^{1-p}u^* 
& = & r^{1-p} \left(\sum_{\bk \in \ck} x^{r_n}_{\bk} - u^*\right) \\
& \ge & r^{1-p} d(\bx^{r_n}, \cx^*)/D \\
& \ge & r^{1-p}\cdot (2D|\ck|r^{p-1})/D \ge 2|\ck|. 
\end{eqnarray*}
For each $n$, consider the Lagrangian $L(\wt{\bx}^{r_n}, \boldeta)$ of $\mbox{LP}^{r_n}$, 
evaluated at $\wt{\bx}^{r_n}$ and $\boldeta$: 
\[
L(\wt{\bx}^{r_n}, \boldeta) = \sum_{\bk \in \ck} \wt{x}^{r_n}_{\bk} + \sum_{i \in \ci} \eta_i\left(\rho_i r_n^{1-p} - \sum_{\bk \in \ck} k_i \wt{x}^{r_n}_{\bk}\right). 
\]
We calculate the Lagrangian in two ways. First, by feasibility of $\wt{\bx}^{r_n}$, 
$L(\wt{\bx}^{r_n}, \boldeta) = \sum_{\bk \in \ck} \wt{x}^{r_n}_{\bk}$. 
Second, we rewrite $L(\wt{\bx}^{r_n}, \boldeta)$ as 
\[
L(\wt{\bx}^{r_n}, \boldeta) =  r_n^{1-p} \sum_{i \in \ci} \rho_i \eta_i 
+ \sum_{\bk \in \ck} \left(1 - \sum_{i \in \ci} k_i \eta_i\right)\wt{x}^{r_n}_{\bk}. 
\]
The first term on the RHS equals $r_n^{1-p} u^*$, 
by the dual optimality of $\boldeta$. 
For the second term on the RHS, note that 
in the proof of Lemma \ref{lem:existence-si-1}, 
we have established that 
for all $\bk \in \ck$, 
$\sum_{i \in \ci} k_i \eta_i \le 1$, 
and if $\sum_{i\in \ci} k_i \eta_i < 1$, then $\wt{x}_{\bk} < 1$.
Since $\wt{\bx}^{r_n} \rightarrow \wt{\bx}$, 
for all sufficiently large $n$, 
if $\sum_{i\in \ci} k_i \eta_i < 1$, then $\wt{x}^{r_n}_{\bk} \le 1$.
Thus for all sufficiently large $n$, 
\[
\sum_{\bk \in \ck} \left(1 - \sum_{i \in \ci} k_i \eta_i\right)\wt{x}^{r_n}_{\bk} \leq |\ck|, 
\]
and
\[
\sum_{\bk \in \ck} \wt{x}^{r_n}_{\bk} = L(\wt{\bx}^{r_n}, \boldeta) \le r_n^{1-p} u^* + |\ck|,
\]
contradicting the fact that 
\[
\sum_{\bk \in \ck} \wt{x}^{r_n}_{\bk} - r_n^{1-p} u^* \ge 2|\ck|
\]
for sufficiently large $n$. This establishes Lemma \ref{lem:existence-si-2}.
\end{proof}

\noindent {\bf Proof of Proposition \ref{prop:opt-si-1}.} 
We are now ready to prove Proposition \ref{prop:opt-si-1}. 
Suppose that the proposition does not hold. 
Then for all $\veps > 0$, there exist infinitely many $r$ 
and $\bx^r$ such that $d(\bx^r, \cx^*) \ge 2D|\ck|r^{p-1}$, 
and for all SI pairs (if any) $\{(\bk', i), (\bk, i)\}$ of $\wt{\bx}^r$, 
either $\wt{x}^r_{\bk} < \veps$, or $\wt{x}^r_{\bk' - \be_i} < \veps$, 
or $\Delta_{(\bk', i)}(\wt{\bx}^r) - \Delta_{(\bk, i)}(\wt{\bx}^r) > -\veps$. 
Thus, we can find a subsequence $\{r_n\}$ of $\{r\}$ 
and states $\bx^{r_n}$ such that 
\begin{itemize}
\item[1.] $\bx^{r_n} \rightarrow \bx \in \cx$ as $n \rightarrow \infty$, 
\item[2.] $\wt{\bx}^{r_n} \rightarrow \wt{\bx}$ as $n \rightarrow \infty$, 
with some components of $\wt{\bx}$ being possibly infinite, 
\item[3.] $d(\bx^{r_n}, \cx^*) \ge 2D|\ck|r_n^{p-1}$ for all $n$, and 
\item[4.] for all SI pairs $\{(\bk', i), (\bk, i)\}$ associated with $\wt{\bx}^{r_n}$ (if any), 
either $\wt{x}^{r_n}_{\bk} < 1/n$, or $\wt{x}^{r_n}_{\bk' - \be_i} < 1/n$, 
or $\Delta_{(\bk', i)}(\wt{\bx}^{r_n}) - \Delta_{(\bk, i)}(\wt{\bx}^{r_n}) > -1/n$.
\end{itemize}
From Property $4$, we can deduce that $\wt{\bx}$ does not have an SI pair. 
But by Property $3$, this contradicts Lemma \ref{lem:existence-si-2}. 
This establishes Proposition \ref{prop:opt-si-1}. \hfill\(\Box\)

\subsection{Proof of Theorem \ref{thm:main-closed}}\label{ssec:proof-closed}

%\paragraph{Technical Preliminaries.}
%COM: EDITED 
%So far we have not discussed how the closed systems 
%in the sequence are related in the asymptotic regime $r \rightarrow \infty$. 
%Their relations do not affect the statements and proofs 
%of any result in the paper. For concreteness, 
%one can think of 
We will assume WLOG the following construction of the probability space.
For each $(\bk,i) \in \cm$, consider an independent
unit-rate Poisson process $\{\Pi_{(\bk, i)}(t), ~t\ge 0\}$. 
Assume that, for each $r$,
the Markov process $\bX^r(\cdot)$ is driven by this common
set of Poisson processes $\Pi_{(\bk, i)}(\cdot)$, as follows.
For each $(\bk, i)\in \cm$, let us denote by $D^r_{(\bk, i)}(t)$ the total 
number of type-$i$ service completions from servers of configuration $\bk$, 
 in the time interval $[0,t]$. 
 Then
\beql{eq-driving}
D^r_{(\bk, i)}(t) = \Pi_{(\bk, i)} \left(\int_0^t X_{\bk}^r(\xi) k_i \mu_i d\xi\right).
\end{equation}

\begin{lem}\label{lem:unif-conv}
Let $T > 0$ be fixed. With probability $1$, the following property holds. 
Consider any sequence $\{t_0^r\}_r$ with 
$t_0^r \in [0, Tr^{2-p}]$. Then for any $\xi \in [0, 1]$, 
and for any $(\bk, i) \in \cm$, 
\[
\frac{1}{r^{2p-1}}\left(\Pi_{(\bk, i)}\left(t_0^r + \xi r^{2p-1} \right) - \Pi_{(\bk, i)}\left(t_0^r\right)\right) 
\rightarrow \xi 
\]
as $r \rightarrow \infty$. The convergence is uniform over $t_0^r, \xi$, and $(\bk, i)$ 
in the following sense. 
For any $\veps > 0$, there exists $r(\veps)$ such that 
for all $r \geq r(\veps)$, $\xi \in [0, 1]$, $(\bk, i) \in \cm$, and $t_0^r \in [0, Tr^{2-p}]$, 
\[
\max_{(\bk, i), \xi, t_0^r}\left| \frac{1}{r^{2p-1}}\left(\Pi_{(\bk, i)}\left(t_0^r + \xi r^{2p-1} \right) - \Pi_{(\bk, i)}\left(t_0^r\right)\right) - \xi \right| < \veps. 
\]
\end{lem}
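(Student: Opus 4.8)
The statement is a uniform strong law of large numbers for a unit-rate Poisson process, sampled over windows of length $r^{2p-1}$ with starting points $t_0^r$ ranging over $[0, T r^{2-p}]$. Since $p \in (\tfrac12, 1)$ we have $2p - 1 > 0$, so the window length grows to infinity, while the number of disjoint windows needed to tile $[0, T r^{2-p}]$ is of order $r^{2-p}/r^{2p-1} = r^{3-3p}$, which is polynomial in $r$. The plan is to reduce the supremum over the continuum of starting points and window-fractions to a supremum over a polynomially-sized grid, control the residual by monotonicity of $\Pi$, and then apply a standard Poisson tail bound together with Borel--Cantelli.

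First I would fix $(\bk,i)$ (there are finitely many, so a union bound handles this) and a target accuracy $\veps > 0$. For the $r$th system, introduce a grid of points $s_j^r = j \cdot \delta r^{2p-1}$ for $j = 0, 1, \ldots, M_r$, where $\delta = \delta(\veps) > 0$ is a small constant and $M_r = O(r^{2-p}/(\delta r^{2p-1})) = O(r^{3-3p})$ is chosen so the grid covers $[0, 2Tr^{2-p}]$ (a bit more than needed to absorb boundary effects). For an arbitrary $t_0^r \in [0, Tr^{2-p}]$ and $\xi \in [0,1]$, both $t_0^r$ and $t_0^r + \xi r^{2p-1}$ lie within $\delta r^{2p-1}$ of grid points; using that $\Pi_{(\bk,i)}$ is nondecreasing, the increment $\Pi(t_0^r + \xi r^{2p-1}) - \Pi(t_0^r)$ is sandwiched between increments over grid-aligned intervals whose lengths differ from $\xi r^{2p-1}$ by at most $2\delta r^{2p-1}$. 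Hence it suffices to show: with probability $1$, for all large $r$, every grid-aligned increment $\Pi(s_{j'}^r) - \Pi(s_j^r)$ with $0 \le j < j' \le M_r$ satisfies $|(\Pi(s_{j'}^r) - \Pi(s_j^r)) - (s_{j'}^r - s_j^r)| \le \veps r^{2p-1}/2$; then choosing $\delta$ small compared to $\veps$ closes the gap for the continuum version.

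For the grid estimate, $\Pi(s_{j'}^r) - \Pi(s_j^r)$ is Poisson with mean $m := s_{j'}^r - s_j^r \in [\delta r^{2p-1}, 2T r^{2-p}]$. A standard Chernoff bound for Poisson random variables gives, for any $a > 0$, $\pr(|\mathrm{Poisson}(m) - m| \ge a) \le 2\exp(-c a^2/m)$ for some absolute $c > 0$ (valid in the regime $a \le m$). Taking $a = \veps r^{2p-1}/2$ and using $m \le 2Tr^{2-p}$, the exponent is of order $-c \veps^2 r^{2(2p-1)}/r^{2-p} = -c' \veps^2 r^{3p-4 + 2p - 1}$... more carefully, $r^{4p-2 - (2-p)} = r^{5p - 4}$; since $p > 4/5$ this exponent is positive, but for $p \in (\tfrac12, 4/5]$ it is not, so this crude bound is insufficient and I would instead split windows by the magnitude of $m$. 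The cleaner route: it is enough to control increments over the $O(r^{3-3p})$ \emph{elementary} grid intervals of length exactly $\delta r^{2p-1}$, since any longer grid-aligned increment is a sum of at most $M_r$ of these and the errors add; equivalently, work directly with $\max_j |(\Pi(s_{j+1}^r) - \Pi(s_j^r)) - \delta r^{2p-1}|$ over $j \le M_r$. Each such term is centered Poisson with mean $\delta r^{2p-1}$; its deviation by $\veps r^{2p-1}/(2M_r)$-type amounts... Actually the right normalization is to ask each elementary increment to be within a $(1\pm\veps)$ multiplicative factor, i.e. deviation $\le \veps \delta r^{2p-1}$; then the Poisson Chernoff bound gives probability $\le 2\exp(-c\veps^2 \delta r^{2p-1})$, and $\pr(\exists j \le M_r: \text{bad}) \le 2 M_r \exp(-c\veps^2\delta r^{2p-1}) = O(r^{3-3p}\exp(-c\veps^2\delta r^{2p-1}))$, which is summable in $r$ since $2p - 1 > 0$. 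Summing the elementary increments then yields, for any grid-aligned pair, a multiplicative $(1\pm\veps)$ control on $\Pi(s_{j'}^r) - \Pi(s_j^r)$ versus $s_{j'}^r - s_j^r$; combined with the monotonicity sandwich and a relabeling $\veps \mapsto \veps/(4 + 2T)$ or similar, this gives the claimed uniform bound. Borel--Cantelli over a sequence $\veps \downarrow 0$ (e.g. $\veps = 1/\ell$) and a union over the finitely many $(\bk,i)$ then upgrades this to a single probability-$1$ event on which the stated convergence holds uniformly.

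The main obstacle is bookkeeping the exponents to confirm that the union bound over the $O(r^{3-3p})$ grid cells is summable against the Poisson large-deviation bound $\exp(-\Theta(r^{2p-1}))$ — which it is, since $r^{3-3p}$ is only polynomial while $r^{2p-1}\to\infty$ — and correctly propagating the multiplicative control through sums of up to $M_r$ elementary increments without the errors blowing up (this is why multiplicative rather than additive control on each elementary cell is the convenient formulation). Everything else is routine: finite union over $(\bk,i) \in \cm$, the monotonicity sandwich to pass from grid to continuum, and Borel--Cantelli to pass from "each $\veps$" to "w.p.\ 1".
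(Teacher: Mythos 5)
Your proposal is correct and follows essentially the same route as the paper's (omitted) proof: tile the time horizon by polynomially many elementary subintervals, apply a Poisson large-deviation/Chernoff bound to each increment, use a union bound plus Borel--Cantelli, and then sandwich an arbitrary window via monotonicity of $\Pi_{(\bk,i)}$. The only difference is the elementary cell length ($\delta r^{2p-1}$ in your version versus $r^{p-1/2}$ in the paper's sketch), and your mid-proof course correction to per-cell multiplicative control is exactly the right fix; both choices work since the cell count stays polynomial while the per-cell deviation probability is $\exp(-\mathrm{poly}(r))$.
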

%\red{COM: Remove proof if necessary. ENDCOM}
%\begin{proof} {See Appendix \ref{apdx:unif-conv}.}
%\end{proof}
{The proof of Lemma \ref{lem:unif-conv} %COM: EDITED SLIGHTLY
depends on simple large-deviation type estimates for Poisson random variables. 
The idea is essentially the same as that of Lemma 4.3 in \cite{SS2002}: 
we partition the interval $[0, Tr^{2p-1}]$ into subintervals of length $r^{p-1/2}$, 
and for each of them write the probability that the average increase rate 
of $\Pi_{(\bk, i)}$ lies outside $(1-\veps,1+\veps)$.
These probabilities are 
 $\exp\left(-\mbox{poly}(r)\right)$, 
and we only have $\mbox{poly}(r)$ such subintervals 
(here $\mbox{poly}(r)$ means a polynomial in $r$). This is true for any $\veps>0$.
We can then cover {\em any} subinterval of length $r^{2p-1}$ 
by these subintervals of length $r^{p-1/2}$.
We omit a detailed proof here.}

The following corollary is a simple consequence of Lemma \ref{lem:unif-conv}.
\begin{cor}\label{cor:del-x}
Let $T$ be fixed. With probability $1$, the following holds. 
For sufficiently large $r$, 
\beql{eq-bounded-rate}
\max_{\substack{ \xi \in [0, 1], \\ t^r_0 \in [0, Tr^{1-p}]}} d\left(\bX^r(t^r_0 + \xi r^{p-1}), \bX^r(t^r_0) \right) \leq 2 \bar \mu |\ck| r^p,
\end{equation}
where $\bar{\mu} = \max_{i\in\ci} \mu_i$, and $\mu_i$ is the service rate for type-$i$ customers.
\end{cor}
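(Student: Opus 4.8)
The plan is to derive Corollary~\ref{cor:del-x} directly from Lemma~\ref{lem:unif-conv} by a crude bounding of displacements in terms of the total number of transitions. First I would observe that, over any time window, the only way the state $\bX^r$ can change is through type-$i$ service completions from servers of configuration $\bk$, counted by $D^r_{(\bk,i)}(\cdot)$; each such event moves $\bX^r$ by at most $2$ in $1$-norm (one server leaves configuration $\bk$, one server enters configuration $\bk'$, for some $\bk' \in \ck$ determined by the policy, plus a possible change at $\bk-\be_i$ and $\bk'-\be_i$ --- in any case a bounded number of unit coordinate changes). Hence for any $0 \le s \le t$,
\[
d\bigl(\bX^r(t),\bX^r(s)\bigr) \le c_0 \sum_{(\bk,i)\in\cm} \bigl(D^r_{(\bk,i)}(t) - D^r_{(\bk,i)}(s)\bigr)
\]
for an absolute constant $c_0$ depending only on $|\ck|$ and $I$.

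Next I would bound each increment $D^r_{(\bk,i)}(t) - D^r_{(\bk,i)}(s)$ using the driving representation \eqref{eq-driving}. Since $D^r_{(\bk,i)}(t) = \Pi_{(\bk,i)}\!\left(\int_0^t X^r_{\bk}(\xi)k_i\mu_i\,d\xi\right)$ and $\Pi_{(\bk,i)}$ is nondecreasing, and since the integrand satisfies $X^r_{\bk}(\xi)k_i\mu_i \le \bar\mu \sum_{\bk'}X^r_{\bk'}(\xi) = \bar\mu r$ (using $\sum_{\bk'}X^r_{\bk'}(\xi)\le r$ because there are at most $r$ customers, each in some server) --- actually more simply $X^r_{\bk}(\xi) \le r$ and $k_i \le \bar k$ for a constant $\bar k$ --- the internal clock advances by at most $\bar\mu\bar k\, r \cdot (t-s)$ over $[s,t]$. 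So for a window $[t_0^r, t_0^r + \xi r^{p-1}]$ with $\xi\in[0,1]$ the clock advances by at most $\bar\mu\bar k\, r \cdot r^{p-1} = \bar\mu\bar k\, r^p$. I would then invoke Lemma~\ref{lem:unif-conv}: with $2p-1 \ge p$ for $p\ge 1$... wait, here $p\in(1/2,1)$ so $2p-1 < p$; but $r^p$ is a \emph{fixed multiple} of $r^{2p-1}$ only up to the factor $r^{1-p}\to\infty$, so I instead cover the clock-interval of length $\le \bar\mu\bar k\, r^p$ by $O(r^{1-p})$ consecutive sub-blocks of length $r^{2p-1}$ (legitimate since $p<1$ means $r^p / r^{2p-1} = r^{1-p}$), apply the uniform estimate of Lemma~\ref{lem:unif-conv} with, say, $\veps = 1$ to each block to get that $\Pi_{(\bk,i)}$ increases by at most $2 r^{2p-1}$ on each, and sum. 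This gives, w.p.~$1$ and for all large $r$, $D^r_{(\bk,i)}(t_0^r+\xi r^{p-1}) - D^r_{(\bk,i)}(t_0^r) \le c_1 r^{1-p}\cdot r^{2p-1} = c_1 r^p$ for all admissible $t_0^r,\xi$ and all $(\bk,i)$, with $c_1$ absolute. Plugging into the displacement bound and absorbing constants into the stated $2\bar\mu|\ck|$ yields \eqref{eq-bounded-rate}; I would just be slightly careful that the range $t_0^r\in[0,Tr^{1-p}]$ here is well inside the range $[0,Tr^{2-p}]$ (after rescaling time by the clock) required by Lemma~\ref{lem:unif-conv}.

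The only genuine subtlety --- the main obstacle --- is the bookkeeping of scales: Lemma~\ref{lem:unif-conv} is stated for \emph{additive} increments of the Poisson processes over windows of width $r^{2p-1}$ in the \emph{Poisson clock} variable, whereas the corollary is about windows of width $r^{p-1}$ in \emph{real time}; the translation between the two goes through the bound $\dot{(\text{clock})} \le \bar\mu\bar k\, r$, and one must check the resulting clock-window width $\bar\mu\bar k\, r^p$ is covered by a number of $r^{2p-1}$-blocks that is polynomial in $r$ (it is $O(r^{1-p})$, hence fine) and that the base point stays in the allowed range. Everything else --- the ``$2$ coordinate changes per event'' count and the summation over the finitely many pairs $(\bk,i)\in\cm$ --- is routine and contributes only to the absolute constant, which is why the clean statement carries the explicit constant $2\bar\mu|\ck|$. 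I would present the argument in this order: (1) displacement $\le$ const $\times$ number of departure events; (2) number of events $\le$ Poisson increment over a clock-window of width $O(r^p)$; (3) cover by $O(r^{1-p})$ blocks of width $r^{2p-1}$ and apply Lemma~\ref{lem:unif-conv}.
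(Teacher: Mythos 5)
Your proposal is correct and follows essentially the same route as the paper: bound the displacement by the number of departure events, bound the departure counts via the Poisson clocks using the $O(r)$ rate bound, and invoke the uniform estimate of Lemma~\ref{lem:unif-conv} over the enlarged clock range. You are in fact slightly more explicit than the paper about covering the clock window of width $O(r^p)$ by $O(r^{1-p})$ blocks of width $r^{2p-1}$ (a step the paper leaves implicit), and the differing constant bookkeeping (per-event displacement versus per-coordinate change summed over $\bk\in\ck$) is immaterial.
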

\begin{proof}
Consider the probability-$1$ event in Lemma \ref{lem:unif-conv}, %COM: ADDED THIS
in which we can and do replace $T$ with
$2 \bar \mu T$. 
(We do this because the total ``instantaneous'' rate of all transitions
is upper bounded by $2 \bar \mu r$.)
The rate of departure of type-$i$ customers is $\rho_i \mu_i r \le \rho_i \bar{\mu} r$, 
and the total rate of customer departure is no greater than $\sum_{i \in \ci} \rho_i \bar{\mu} r = \bar{\mu} r$.
Thus, for each $\bk \in \ck$, the rate of change in $X_{\bk}$ is at most $\bar{\mu} r$. 
For an interval of length $r^{p-1}$, the total change in $X_{\bk}$ 
is at most $O(r\cdot r^{p-1}) = O(r^p)$. 
More precisely, with probability $1$, for each $\bk \in \ck$, 
\[
\limsup_{r \rightarrow \infty} \frac{1}{r^p}
\max_{\substack{\xi \in [0, 1], \\t^r_0 \in [0, Tr^{1-p}]}} \left|X^r_{\bk}(t^r_0 + \xi r^{p-1}) - X^r_{\bk}(t^r_0) \right| \leq \bar{\mu}. 
\]
Thus, for sufficiently large $r$, and for each $\bk \in \ck$, 
\[
\max_{\substack{\xi \in [0, 1],\\t^r_0 \in [0, Tr^{1-p}]}} \left|X^r_{\bk}(t^r_0 + \xi r^{p-1}) - X^r_{\bk}(t^r_0) \right| \leq 2 \bar{\mu} r^p.
\]
Summing over the above expression establishes the corollary.
\end{proof}

\begin{prop}\label{prop:loc-decr}
There exist positive constants $C_1$ and $\delta$ 
such that the following holds. 
Let $T > 0$ be given. 
%Let $\varepsilon > 0$ be the same as in Proposition \ref{prop:opt-si-1}, 
%and let $C_1 = 4(D\vee \bar{\mu})|\ck|$, 
%where $\bar{\mu} = \max_{i\in \ci} \mu_i$.  
Then w.p.$1$, 
for all sufficiently large $r$, and for any interval 
$[t_0, t_0+r^{p-1}] \subset [0, Tr^{1-p}]$, 
if $d\left(\bx^r(t_0), \cx^*\right) \ge C_1 r^{p-1}$, then 
\[
F^r\big(\bX^r(t_0 + r^{p-1})\big) - F^r\big(\bX^r(t_0)\big) 
\leq -\delta r^{2p-1}.
\]
\end{prop}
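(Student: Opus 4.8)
The plan is to partition $[t_0,t_0+r^{p-1}]$ into a \emph{fixed} number $K$ of equal subintervals, with $K$ a large constant depending only on $\veps$ (the constant from Proposition~\ref{prop:opt-si-1}), $\bar\mu$, and $|\ck|$, and to show that $F^r$ decreases by at least $\Theta(r^{2p-1})$ on each of them. All statements below hold on the probability-$1$ event obtained by intersecting the event of Corollary~\ref{cor:del-x} with that of Lemma~\ref{lem:unif-conv} (the latter applied with $T$ replaced by a suitable constant multiple, since the internal clocks in \eqref{eq-driving} can run at rates up to $\Theta(r)$). Set $C_1=2D|\ck|+2\bar\mu|\ck|+1$. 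By Corollary~\ref{cor:del-x}, $\|\bx^r(t)-\bx^r(t_0)\|\le 2\bar\mu|\ck|r^{p-1}$ throughout the interval, so the hypothesis $d(\bx^r(t_0),\cx^*)\ge C_1 r^{p-1}$ forces $d(\bx^r(t),\cx^*)\ge 2D|\ck|r^{p-1}$ on the whole interval. Hence, for $r$ large, Proposition~\ref{prop:opt-si-1} applies at the left endpoint $s_j:=t_0+(j-1)r^{p-1}/K$ of each subinterval $j$, yielding an SI pair $\{(\bk'_j,i_j),(\bk_j,i_j)\}$ (depending on $j$ and $r$) with $\wt{x}^r_{\bk_j}(s_j)\ge\veps$, $\wt{x}^r_{\bk'_j-\be_{i_j}}(s_j)\ge\veps$, and $\Delta_{(\bk'_j,i_j)}(\wt{\bx}^r(s_j))-\Delta_{(\bk_j,i_j)}(\wt{\bx}^r(s_j))\le-\veps$.

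\noindent\textbf{Persistence on a subinterval.} On subinterval $j$ there are at most $2\bar\mu r^{p}/K$ migrations for $r$ large (the total migration rate is $\sum_i\mu_i\rho_i r\le\bar\mu r$; here we use Lemma~\ref{lem:unif-conv}), and each changes $\|\bX^r\|_1$ by at most $4$, so $\|\wt{\bx}^r(t)-\wt{\bx}^r(s_j)\|_1\le 8\bar\mu/K$ for every $t$ in the subinterval. Choosing $K$ so that $32\bar\mu/K<\veps/2$, and using that $\wt{w}=1\wedge(\cdot)$ is $1$-Lipschitz (so each $\Delta_{(\bk,i)}$ changes by at most twice the $\|\cdot\|_1$-change of $\wt{\bx}^r$), we obtain, throughout subinterval $j$: $\wt{x}^r_{\bk_j}\ge\veps/2$; $\wt{x}^r_{\bk'_j-\be_{i_j}}>0$, so configuration $\bk'_j$ remains an admissible target for type-$i_j$ placements; and $\Delta_{(\bk'_j,i_j)}(\wt{\bx}^r)-\Delta_{(\bk_j,i_j)}(\wt{\bx}^r)\le-\veps/2$. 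Therefore, whenever a type-$i_j$ customer completes service at a configuration-$\bk_j$ server during subinterval $j$, {\em GSS} places it into some $\bk''$ with $\Delta^r_{(\bk'',i_j)}\le\Delta^r_{(\bk'_j,i_j)}$ (evaluated at the post-departure state); the exact change of $F^r$ at this migration equals $\Delta^r_{(\bk'',i_j)}(\bX^r)-\Delta^r_{(\bk_j,i_j)}(\bX^r)$ up to an $O(1/r^p)$ error (from replacing the $F^r$-increments by $w^r$, and from the pre/post-departure state mismatch), hence is $\le-\veps/2+O(1/r^p)\le-\veps/4$ for $r$ large. Every other migration changes $F^r$ by at most $O(1/r^p)$, since placing a customer back into the server it just vacated is always admissible and so {\em GSS} cannot increase $F^r$ to first order.

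\noindent\textbf{Counting and summation.} Let $M_j$ denote the number of type-$i_j$ service completions from configuration $\bk_j$ during subinterval $j$. By \eqref{eq-driving}, $M_j$ is the increment of the unit-rate Poisson process $\Pi_{(\bk_j,i_j)}$ over an internal-time interval that is contained in $[0,\Theta(r^{2-p})]$ and has length at least $(\veps r^p/2)\cdot(\min_i\mu_i)\cdot(r^{p-1}/K)=\Theta(r^{2p-1})$ (using $\wt{x}^r_{\bk_j}\ge\veps/2$ and $(\bk_j)_{i_j}\ge1$); by Lemma~\ref{lem:unif-conv} (applied with a constant $\xi\in(0,1)$ and an inflated $T$) together with monotonicity of $\Pi_{(\bk_j,i_j)}$, we get $M_j\ge \frac{\veps(\min_i\mu_i)}{4K}\,r^{2p-1}$ for $r$ large, uniformly over $t_0$ and $j$. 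Consequently the change of $F^r$ over subinterval $j$ is at most $M_j\cdot(-\veps/4)+(2\bar\mu r^p/K)\cdot O(1/r^p)\le-\frac{\veps^2\min_i\mu_i}{32K}\,r^{2p-1}$ for $r$ large. Summing over $j=1,\dots,K$ yields $F^r(\bX^r(t_0+r^{p-1}))-F^r(\bX^r(t_0))\le-\delta r^{2p-1}$ with $\delta=\frac{\veps^2\min_i\mu_i}{32}$, completing the argument.

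\noindent\textbf{Main obstacle.} The essential difficulty is that over the full $r^{p-1}$-interval the local-fluid-scaled state $\wt{\bx}^r$ may move by an amount of order $\Theta(1)$ — Corollary~\ref{cor:del-x} bounds it only by a constant, not by $o(1)$ — which is already enough to destroy the SI structure supplied by Proposition~\ref{prop:opt-si-1}; the resolution is to subdivide the interval into a \emph{constant} number $K$ of pieces, $K$ large enough that $\wt{\bx}^r$ moves by only $O(1/K)$ on each piece yet independent of $r$, at the price of working with a possibly different SI pair on each piece. A secondary point requiring care is the bookkeeping of the $O(1/r^p)$ discrepancies between exact increments of $F^r$ and the first-order quantities $\Delta^r_{(\bk,i)}$, and the verification that over the $O(r^p)$ migrations occurring in the interval these accumulate to only $O(1)=o(r^{2p-1})$.
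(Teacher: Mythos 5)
Your proof is correct and follows essentially the same route as the paper's: reduce to the existence of a uniformly good SI pair via Proposition~\ref{prop:opt-si-1} and Corollary~\ref{cor:del-x}, subdivide $[t_0,t_0+r^{p-1}]$ into a constant number of pieces short enough that the local-fluid state (hence a fixed SI pair) persists on each, lower-bound the number of favorable departures via Lemma~\ref{lem:unif-conv}, and absorb the $O(r^{-p})$-per-transition second-order errors into an $O(1)=o(r^{2p-1})$ total. The only differences are cosmetic: you argue directly where the paper argues by contradiction, and your subinterval length $r^{p-1}/K$ plays the role of the paper's $c\veps r^{p-1}$.
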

\begin{proof}
The proof idea is as follows. Consider the increase in $F^r$ 
at each state transition. For concreteness, suppose that 
the current system state is $\bX^r$, and 
a type-$i$ customer just completed its service requirement 
on a server with configuration $\bk$, and is placed into a server 
with configuration $\bk'$. Then it is a simple calculation to see that 
the increase in $F^r$ is at most
\[
\Delta^r_{(\bk', i)}(\bX^r) - \Delta^r_{(\bk, i)}(\bX^r) + 4r^{-p}.
\]
The term $\Delta^r_{(\bk', i)}(\bX^r) - \Delta^r_{(\bk, i)}(\bX^r)$ 
captures the first-order increase in $F^r$, and 
the term $4r^{-p}$ bounds the second-order increase in $F^r$.
%and is clearly related with $\frac{d^2}{dX^2}f^r(X) \leq r^{-p}$.
We will see that over an interval of length $r^{p-1}$, 
the increase in $F^r$ due to first-order terms is at most
$-O(r^{2p-1})$, and the increase due to second-order terms 
is at most a constant. We now proceed to the formal proof.

From now on, we work with the probability-$1$ event defined in Lemma \ref{lem:unif-conv}, 
under which 
\[
\frac{1}{r^{2p-1}}\left(\Pi_{(\bk, i)}\left(t_0 + \xi r^{2p-1} \right) - \Pi_{(\bk, i)}\left(t_0\right)\right) 
\rightarrow \xi 
\]
as $r \rightarrow \infty$, uniformly over $t_0, \xi$, and $(\bk, i)$. 
Let $C_1 = 2(\bar{\mu} + D)|\ck|$, where $\bar{\mu} = \max_{i\in \ci} \mu_i$ 
and $D$ is the same as in Lemma \ref{LEM:LP-RATE}.
Let $\varepsilon > 0$ be the same as in Proposition \ref{prop:opt-si-1}, 
and let $\delta > 0$ be such that $\delta < \frac{1}{8}\mu_i\veps^2$ for all $i \in \ci$. 

Claim that for all sufficiently large $r$, and for any interval 
$[t_0, t_0 + r^{p-1}] \subset [0, Tr^{1-p}]$, if $d\left(\bx^r(t_0), \cx^*\right) \ge C_1r^{p-1}$, 
then 
\[
F^r\big(\bX^r(t_0 + r^{p-1})\big) - F^r\big(\bX^r(t_0)\big) 
\leq -\delta r^{2p-1}.
\]

Suppose the contrary. Then there exist a subsequence of $\{r\}$ 
(which, with an abuse of notation, we still index by $r$), 
along which we have some $[t_0^{r}, t_0^{r} + r^{p-1}] \subset [0, Tr^{1-p}]$, 
such that $d\left(\bx^r(t_0^{r}), \cx^*\right) \geq C_1 r^{p-1}$, and 
\begin{equation}\label{eq:contrary-drift-F}
F^{r}\big(\bX^{r}(t_0^{r} + r^{p-1})\big) - F^{r}\big(\bX^{r}(t_0^{r})\big) 
> -\delta r^{2p-1}. 
\end{equation}
First, for sufficiently large $r$, and for all $\xi \in [0, 1]$, 
there exists a SI pair $\{(\bk', i), (\bk, i)\}$ associated with 
$\bx^{r}(t_0^{r} + \xi r^{p-1})$ (possibly depending on $r$ and $\xi$), 
such that 
\begin{align}
& \wt{x}^{r}_{\bk}(t_0^{r} + \xi r^{p-1}) \ge \veps, \quad
\wt{x}^{r}_{\bk' - \be_i}(t_0^{r} + \xi r^{p-1}) \ge \veps, \mbox{ and}\label{eq:si-pair-1}\\
& \Delta_{(\bk', i)}(\wt{\bx}^{r}(t_0^{r} + \xi r^{p-1})) 
- \Delta_{(\bk, i)}(\wt{\bx}^{r}(t_0^{r} + \xi r^{p-1})) \le -\veps. \label{eq:si-pair-2}
\end{align}
By Corollary \ref{cor:del-x}, for all $\xi \in [0, 1]$, %COM: TYPO CORRECTED
$d\left(\bX^{r}(t_0^{r} + \xi r^{p-1}), \bX^r(t_0^r)\right) \leq 2\bar{\mu} |\ck| r^p$. 
Using triangle inequality and choosing $C_1 > 2(\bar{\mu} + D) |\ck|$, we have that 
for sufficiently large $r$, and for all $\xi \in [0, 1]$, 
\[
d\left(\bx^{r}(t_0^{r} + \xi r^{p-1}), \cx^*\right) \ge 2D|\ck| r^{p-1}.
\]
\eqref{eq:si-pair-1} and \eqref{eq:si-pair-2} now follow from Proposition \ref{prop:opt-si-1}. 

Fix a sufficiently large $r$ so that \eqref{eq:si-pair-1} and \eqref{eq:si-pair-2} hold. 
We then consider the first-order change in $F^r$ over the interval $[t_0^{r}, t_0^{r} + r^{p-1}]$ 
(i.e., the difference of $\Delta$). To do this, we partition $[t_0^{r}, t_0^{r} + r^{p-1}]$ 
into subintervals of length $c\veps r^{p-1}$, 
with $c > 0$ chosen small enough so that on each subinterval, 
there exists a {\em fixed} SI pair $\{(\bk', i), (\bk, i)\}$ such that 
\eqref{eq:si-pair-1} and \eqref{eq:si-pair-2} hold for this SI pair, 
and with $\veps$ replaced by $\veps/2$. We now argue that this can be done. 
Consider the first such subinterval, for example. 
By Lemma \ref{lem:unif-conv}, for sufficiently large $r$, 
the number of state transitions over this subinterval 
is at most $(c\veps r^{p-1})\cdot O(r) = O(\veps r^p) < \frac{1}{8}\veps r^p$, 
by choosing a sufficiently small $c$. 
This implies that for each $\bk \in \ck$, 
the change in $\tilde{x}^r_{\bk}$ over this subinterval 
is at most $\frac{1}{8}\veps$. Thus, \eqref{eq:si-pair-1} and \eqref{eq:si-pair-2} 
hold for an SI pair associated with $\tilde{\bx}^r (t_0^r)$, 
with $\veps$ replaced by $\veps/2$. The same argument holds  
for other subintervals.

Now concentrate on the subinterval $[t_0^r, t_0^r + c\veps r^{p-1}]$, 
and a corresponding SI pair $\{(\bk', i), (\bk, i)\}$ 
associated with $\tilde{\bx}^r(t_0^r)$ for which \eqref{eq:si-pair-1} and \eqref{eq:si-pair-2} 
hold on this subinterval with $\veps$ replaced by $\veps/2$. 
The number of type-$i$ departures from servers of configuration $\bk$ 
is at least $\mu_i \cdot \frac{\veps r^p}{2}\cdot (c\veps r^{p-1}) = \frac{1}{2} c\mu_i \veps^2 r^{2p-1}$.
At each such departure, 
the first-order increase (due to the difference of $\Delta$) in $F^{r}$ 
is at most $-\veps/2$, since {\em GSS} results in a smaller first-order increase than 
moving the departure to a server with configuration $\bk' - \be_i$. Summing over 
all such increases over type-$i$ departures gives a first-order increase in $F^r$ 
which is at most 
\[
-\frac{\veps}{2} \cdot \left(\frac{1}{2}c \mu_i \veps^2 r^{2p-1}\right) \le -2c \veps \delta r^{2p-1}.
\]
Exactly the same argument holds for other subintervals, so the total first-order increase in $F^r$ 
is at most $-2\delta r^{2p-1}$.

Finally, consider the second-order increase in $F^{r}$. 
%over the interval $[t_0^{r}, t_0^{r} + r^{p-1}]$. 
As discussed at the beginning of the proof, 
the second-order increase in $F^{r}$ at each state transition 
is at most $4r^{-p}$. For sufficiently large $r$, 
the total number of state transitions 
over the interval $[t_0^{r}, t_0^{r} + r^{p-1}]$ 
is at most $r^{p-1}\cdot O(r) = O(r^p)$, 
and hence the total second-order increase in $F^{r}$ 
is at most $(4r^{-p}) \cdot O(r^p) = O(1)$.
Thus, for sufficiently large $r$, 
\[
F^{r}\big(\bX^{r}(t_0^{r} + r^{p-1})\big) - F^{r}\big(\bX^{r}(t_0^{r})\big) 
\le -2\delta r^{2p-1} + O(1) 
\le -\delta r^{2p-1}. 
\]
This contradicts \eqref{eq:contrary-drift-F}, and we have established the proposition.
\iffalse
COM: THE FOLLOWING IS THE "HAND-WAVING" ARGUMENT THAT NEEDS TO BE DONE MUCH MORE CLEANLY.
SEE THE EARLIER COMMENT ABOVE Lemma \ref{lem:opt-si-2} ENDCOM\\
Recall from the remark preceding Lemma \ref{lem:opt-si-2} 
that the rate of change of $F^r$ is no greater than $r^p D_{\min}$. 
Thus, for sufficiently large $r$, the first-order change in $F^r$ over the time interval 
$[t_0^r, t_0^r + r^{p-1}]$ 
is at most 
\[
\frac{3}{4} r^{p-1} r^p D_{\min} < - \frac{3}{4} r^{2p-1} \veps_1.
\]
Now consider the second-order change in $F^r$. 
As explained in the proof idea, 
the second-order increase in $F^r$ at each state transition is at most $4r^{-p}$. 
For sufficiently large $r$, the total number of transitions 
in the interval $[t_0^r, t_0^r + r^{p-1}]$ is at most $2r\cdot r^{p-1} = 2r^p$. 
Thus, the total second-order increase in $F^r$ is at most $4r^{-p}\cdot 2r^p = 8$, 
for sufficiently large $r$. 
Therefore, for sufficiently large $m$, 
\[
F^{r_m}\big(\bX^{r_m}(t_0^{r_m} + r_m^{p-1})\big) - F^{r_m}\big(\bX^{r_m}(t_0^{r_m})\big) \leq -\frac{\veps_1}{2} r_m^{2p-1} = -\delta r_m^{2p-1}.
\]
This is a contradiction to the fact that 
\[
F^{r_m}\big(\bX^{r_m}(t_0^{r_m} + r_m^{p-1})\big) - F^{r_m}\big(\bX^{r_m}(t_0^{r_m})\big) > -\delta r_m^{2p-1}
\]
for all $m$. We have thus established the proposition.\fi
\end{proof}

\begin{prop}\label{PROP:CONV-PROB} 
There exist positive constants $C$ and $T$ such that as $r \rightarrow \infty$, 
\[
\pr\left(d\left(\bx^r(Tr^{1-p}), \cx^*\right) \leq Cr^{p-1} \right) \rightarrow 1.
\]
\end{prop}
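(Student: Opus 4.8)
The plan is to use Proposition \ref{prop:loc-decr} to show that, over a time horizon of length $Tr^{1-p}$ (which is $r^{p-1}$-many ``local'' intervals of length $r^{p-1}$), the quantity $F^r(\bX^r(\cdot))$ must decrease by a definite amount on every such subinterval on which the system is still far from $\cx^*$, and to combine this with the lower bound $F^r(\bX^r) \ge \sum_{\bk} X^r_{\bk} - |\ck|r^p/2 \ge r\,u^* - |\ck| r^p/2$ coming from Lemma \ref{lem:F-sum-close} and feasibility (since $\sum_{\bk} X^r_{\bk} \ge r\,u^*$ always). First I would fix the constants $C_1$ and $\delta$ from Proposition \ref{prop:loc-decr}, set $C = C_1$, and work on the probability-$1$ event of that proposition (and of Lemma \ref{lem:unif-conv} / Corollary \ref{cor:del-x}); everything below is for sufficiently large $r$ on this event. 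Partition $[0, Tr^{1-p}]$ into the $\lceil r^{1-p}\rceil$ consecutive intervals $J_m = [m r^{p-1}, (m+1)r^{p-1}]$, $m = 0,1,\ldots$

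Call an interval $J_m$ \emph{bad} if $d(\bx^r(mr^{p-1}), \cx^*) \ge C_1 r^{p-1}$. On each bad interval, Proposition \ref{prop:loc-decr} gives $F^r(\bX^r((m+1)r^{p-1})) - F^r(\bX^r(mr^{p-1})) \le -\delta r^{2p-1}$. On every interval (bad or not), $F^r$ can increase by at most the number of state transitions times the $O(r^{-p})$ per-transition second-order term plus the (nonpositive, by the SI/Greedy structure — or simply $O(r^{-p})$ per transition overall) first-order term; more crudely, since each transition changes $F^r$ by at most $2$ in absolute value and there are $O(r^p)$ transitions per interval, $F^r$ changes by at most $O(r^p) = o(r^{2p-1})$ per interval regardless. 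Hence if there were at least, say, $\tfrac{2}{\delta}\,\|\cdot\|$-type many bad intervals among the first $\lceil r^{1-p}\rceil$, the total decrease of $F^r$ over $[0,Tr^{1-p}]$ would exceed $r^{p-1} \cdot$ (something of order $r^{2p-1}$) minus negligible corrections, i.e. of order $r^p$; but $F^r$ is bounded below by $r u^* - |\ck|r^p/2$ and bounded above by its initial value $F^r(\bX^r(0)) \le \sum_{\bk} X^r_{\bk}(0) \le $ (a constant times $r$, since $\bx^r(0)\in\cx$ compact). So $F^r(\bX^r(0)) - (r u^* - |\ck|r^p/2) = O(r^p)$: choosing $T$ large enough that $\lfloor T \rfloor \cdot \delta r^{2p-1} - (\text{number of intervals})\cdot O(r^p)$ exceeds this $O(r^p)$ bound forces the conclusion that \emph{not all} of the first $\lceil T r^{1-p} \rceil$ intervals can be bad — in fact at least one of them, say $J_{m^*}$, is good, meaning $d(\bx^r(m^* r^{p-1}), \cx^*) < C_1 r^{p-1}$ for some $m^* \le T r^{1-p}$.

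To upgrade ``good at some $m^* \le Tr^{1-p}$'' to ``good (within a possibly larger constant) at the endpoint $Tr^{1-p}$'', I would argue that once $d(\bx^r(t_0),\cx^*) < C_1 r^{p-1}$, the process cannot travel far in fluid scale over the remaining time: by Corollary \ref{cor:del-x}, over any single interval of length $r^{p-1}$ the change in $\bX^r$ is at most $2\bar\mu|\ck|r^p$, i.e. the change in $\bx^r$ is at most $2\bar\mu|\ck|r^{p-1}$; and on any interval that starts good, Proposition \ref{prop:loc-decr} does not apply, but on any interval that starts bad, $F^r$ drops by $\delta r^{2p-1}$, which (again by the lower bound on $F^r$) can happen only $O(1)$ more times. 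So after the first good interval, the number of subsequent bad intervals before time $Tr^{1-p}$ is $O(1)$, each contributing at most $2\bar\mu|\ck|r^{p-1}$ to $d(\bx^r(\cdot),\cx^*)$, and good-starting intervals keep $d(\bx^r(\cdot),\cx^*) \le C_1 r^{p-1} + 2\bar\mu|\ck| r^{p-1}$ throughout. Taking $C$ to be $C_1 + O(\bar\mu|\ck|)$ large enough to absorb all these excursions yields $d(\bx^r(Tr^{1-p}),\cx^*) \le Cr^{p-1}$ on the probability-$1$ event, for all large $r$; since the event has probability $1$, the stated convergence in probability follows. The main obstacle I anticipate is the bookkeeping in this last paragraph: carefully tracking how $d(\bx^r(\cdot),\cx^*)$ can grow across the $O(1)$ residual bad intervals and making sure the ``good'' threshold is stable (which is exactly why Lemma \ref{LEM:LP-RATE}, relating $d(\cdot,\cx^*)$ to the objective gap that drives $F^r$ down, is needed — it guarantees that being far from $\cx^*$ in distance forces $F^r$ to keep decreasing, closing the loop).
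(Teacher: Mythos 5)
The first half of your argument (existence of some subinterval on which the state is within $C_1 r^{p-1}$ of $\cx^*$) is essentially the paper's step, modulo an arithmetic slip: the number of $r^{p-1}$-long subintervals of $[0,Tr^{1-p}]$ is $\lceil Tr^{2(1-p)}\rceil$, not $r^{p-1}$ or $\lceil r^{1-p}\rceil$. This matters, because the contradiction requires the cumulative guaranteed decrease, $(\text{number of intervals})\cdot\delta r^{2p-1}=T\delta r$, to exceed the full range $F^r(\bX^r(0))-(ru^*-|\ck|r^p/2)=O(r)$; with your count the cumulative decrease is only $O(r^p)=o(r)$ and no contradiction is obtained. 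With the correct count and $T$ of order $1/\delta$ this step is fine.

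The second half has a genuine gap, rooted in the inverted comparison ``$O(r^p)=o(r^{2p-1})$.'' Since $p<1$ we have $2p-1<p$, so $r^{2p-1}=o(r^p)$: the guaranteed decrease of $F^r$ on a bad interval ($\delta r^{2p-1}$) is asymptotically \emph{smaller} than the possible increase on any interval ($O(r^p)$, from $O(r^p)$ transitions each changing $F^r$ by $O(1)$). Consequently a single good interval can undo the decrease of order $r^{1-p}$ bad intervals, and your claim that only $O(1)$ bad intervals occur after the first good one is unjustified; the resulting bookkeeping, which adds $O(r^{p-1})$ to $d(\bx^r(\cdot),\cx^*)$ per bad interval, then fails to produce a bound of order $r^{p-1}$ at the endpoint. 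The paper avoids counting bad intervals altogether: it tracks the \emph{value} of $F^r$ at the grid points and applies a reflected-sequence lemma (Lemma \ref{lem:max-seq}) --- if $a_{\ell+1}-a_\ell\le C_3 r^p$ always and $a_{\ell+1}<a_\ell$ whenever $a_\ell\ge ru^*+C_1r^p$ (which holds because $F^r(\bX^r(\ell r^{p-1}))\ge ru^*+C_1r^p$ forces $d(\bx^r(\ell r^{p-1}),\cx^*)\ge \sum_{\bk}x^r_{\bk}-u^*\ge C_1r^{p-1}$, triggering Proposition \ref{prop:loc-decr}) --- to conclude $F^r(\bX^r(\ell r^{p-1}))\le ru^*+(C_1+C_3)r^p$ for all $\ell\ge\ell_0$. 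Only at the very end is this converted into a distance bound, via Lemma \ref{lem:F-sum-close} and then Lemma \ref{LEM:LP-RATE}: $d(\bx^r,\cx^*)\le D(\sum_{\bk}x^r_{\bk}-u^*)\le D(C_2+|\ck|/2)r^{p-1}$. You should replace your excursion-counting argument with this $F^r$-level invariant; note in particular that a decrease of $F^r$ does not by itself control the growth of $d(\bx^r(\cdot),\cx^*)$, which is why the conversion must go through the objective gap rather than through per-interval displacement estimates.
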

%\red{COM: We can put proof of proposition to appendix, as the idea is quite clear, 
%but the details rather involved. ENDCOM}
\begin{proof}[Sketch] 
{%COM: EDITED A LITTLE 
The proof is very intuitive. 
We keep track of the evolution of $F^r$ on the interval $[0, Tr^{1-p}]$
subdivided into $r^{p-1}$-long subintervals. W.p.1., for all sufficiently large $r$,
the following is true for each subinterval $[t_0,t_0+r^{p-1}]$: $F^r$ decreases
by at least $\delta r^{2p-1}$ if $d\left(\bx^r(t_0), \cx^*\right) \ge C_1 r^{p-1}$
(by Proposition \ref{prop:loc-decr}), and it can never increase by more than
$C_3 r^p$. Therefore, if we choose $T$ large enough, then
$d\left(\bx^r(t), \cx^*\right) < C_1 r^{p-1}$ at some time $t\in[0,Tr^{1-p}]$
(because otherwise $F^r$ would become negative),
and $d\left(\bx^r(t), \cx^*\right)=O(r^{p-1})$ thereafter.
\iffalse
We first show that for sufficiently large $r$, 
$d\left(\bx^r(t_0), \cx^*\right) \le O(r^{p-1})$ for some $t_0 \in [0, T r^{1-p}]$. 
By Proposition \ref{prop:loc-decr}, whenever $\bx^r$ is away from $\cx^*$ 
by $C_1r^{1-p}$, $F^r$ decreases by $\delta r^{2p-1}$. 
$F^r$ is nonnegative, so it cannot decrease ``forever''. 
Therefore, by picking $T$ sufficiently large, such $t_0$ exists. 
We then show that $d\left(\bx^r(t), \cx^*\right) \le O(r^{p-1})$ 
for all $t \in [0, T r^{1-p}]$ with $ t \ge t_0$. 
This is done by carefully keeping track 
of the increase of $F^r$ and the difference between 
$\sum_{\bk} X^r_{\bk}$ and $F^r(\bX^r)$. 
\fi
We refer the readers to Appendix \ref{apdx:conv-prob} 
for details.}
\end{proof}

\noindent {\bf Proof of Theorem \ref{thm:main-closed}.} 
Theorem \ref{thm:main-closed} is now a simple consequence of Proposition \ref{PROP:CONV-PROB}. 
For each $r$, %COM SEE THE CONVENTION ABOUT $\bx^r(\infty)$ ENDCOM 
consider $\bx^r(\cdot)$ in the stationary regime.
In particular, for any $T > 0$, $\bx^r(Tr^{1-p})$ has the same distribution as $\bx^r(\infty)$. 
Therefore, by Proposition \ref{PROP:CONV-PROB}, 
\[
\pr\left(d\left(\bx^r(\infty), \cx^*\right) \le C r^{p-1} \right) \rightarrow 1, 
\]
as $r \rightarrow \infty$. 
%In another word, $d\left(\bx^r(\infty), \cx^*\right) \rightarrow 0$ in probability, 
%and hence 
%\[
%\pr\left(\bx(\infty) \in \cx^*\right) = 1. 
%\]
This completes the proof of Theorem \ref{thm:main-closed}. \hfill\(\Box\)

\section{Open System: Asymptotic \\Optimality of (Modified) {GSS}}
\label{sec-open}

{We prove Theorem \ref{th-main-res-open} in this section. 
The proof ``extends''  that of Theorem \ref{thm:main-closed}. 
The main additional step is Theorem \ref{thm:local-fluid-tightness}, which shows that in steady state, 
for each $i \in \ci$, $\tilde Y_i^r(t)$  
the number of tokens of type-$i$, remains $o(r^p)$ with high probability, over $O(r^{1-p})$-long intervals.}
%COM: Maybe define only one driving process for all token departures of type $i$; and just always use the fact that the number of tokens is "small". ENDCOM
As a starting point, we need the following facts.

\iffalse
COM: What do we do about Theorem~\ref{THM:SQRT-R-TIGHTNESS}?
If there is no time to write a proof, I can write a (sketch of) direct proof of 
Th \ref{thm:local-fluid-tightness}, with reference to \cite{SY2012} for details.
This should suffice, I think. ENDCOM
\fi

\begin{thm}
\label{THM:SQRT-R-TIGHTNESS}
Consider the sequence (in $r$) of open systems in steady state. Consider any  fixed $i$.
There exists a positive constant $c$ such that, uniformly on all $r$, 
%COM: CORRECT STATEMENT, IF NECESSARY
$$
\E \exp\{\|r^{-1/2} (\hat Y^r_i(\infty)-\rho_i r,\tilde Y^r_i(\infty))\|  \} \le c.
$$
\end{thm}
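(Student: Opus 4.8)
The plan is as follows. Fix the type $i$ and abbreviate $\hat Y^r=\hat Y_i^r$, $\tilde Y^r=\tilde Y_i^r$. Since $\|(a,b)\|=|a|+|b|$, the Cauchy--Schwarz inequality gives
\[
\E\exp\{\|r^{-1/2}(\hat Y^r(\infty)-\rho_i r,\ \tilde Y^r(\infty))\|\}
\le
\big(\E e^{2r^{-1/2}|\hat Y^r(\infty)-\rho_i r|}\big)^{1/2}\,
\big(\E e^{2r^{-1/2}\tilde Y^r(\infty)}\big)^{1/2},
\]
so it suffices to bound each of the two factors by a constant independent of $r$.

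For the first factor I would use that $\hat Y^r(\cdot)$ is an autonomous $M/M/\infty$ process (actual type-$i$ customers enter at rate $\lambda_i r$, whether or not they displace a token, and each stays an $\exp(\mu_i)$ time), so $\hat Y^r(\infty)$ is Poisson with mean $\rho_i r$, as already noted before Lemma~\ref{lem-complete-state-tight}. Plugging $\theta=\pm 2r^{-1/2}$, $\mu=\rho_i r$ into the identity $\E e^{\theta(Z-\mu)}=e^{\mu(e^{\theta}-1-\theta)}$ and using $e^{\pm x}-1\mp x\le x^2$ for $x\in[0,1]$ yields $\E e^{\pm 2r^{-1/2}(\hat Y^r(\infty)-\rho_i r)}\le e^{4\rho_i}$ for all $r\ge 4$, hence $\E e^{2r^{-1/2}|\hat Y^r(\infty)-\rho_i r|}\le 2e^{4\rho_i}$; for the finitely many remaining $r$ the expectation is finite because the Poisson law has all exponential moments. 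This bounds the first factor uniformly in $r$.

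The second factor is the main step. The pair $(\hat Y^r,\tilde Y^r)$ is itself an irreducible, positive recurrent Markov chain (its transition rates depend only on $(\hat Y^r,\tilde Y^r)$; positive recurrence follows from Lemma~\ref{lem-complete-state-tight} since $(\hat Y^r,\tilde Y^r)$ is a function of the complete-configuration state), with transitions $(\hat y,\tilde y)\to(\hat y-1,\tilde y+1)$ at rate $\mu_i\hat y$ (a service completion spawns a token), $(\hat y,\tilde y)\to(\hat y+1,(\tilde y-1)^+)$ at rate $\lambda_i r$ (an arrival displaces a token, if any), and $(\hat y,\tilde y)\to(\hat y,\tilde y-1)$ at rate $\mu_0\tilde y$ (a timeout). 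The key structural fact is that the token birth rate $\mu_i\hat y$ and the displacement rate $\lambda_i r$ nearly cancel, their difference being $\mu_i(\hat y-\rho_i r)=O(\sqrt r)$ in steady state, while the timeout term $\mu_0\tilde y$ supplies the restoring drift that confines $\tilde Y^r$ to the $\sqrt r$ scale. I would make this quantitative by a two-dimensional exponential Lyapunov function
\[
V(\hat y,\tilde y)=\exp\Big\{2r^{-1/2}\big(\tilde y+\kappa\,|\hat y-\rho_i r|\big)\Big\},\qquad \kappa>1,
\]
and compute its generator $\mathcal{L}^r V$: expanding $e^{\pm 2r^{-1/2}}-1=\pm 2r^{-1/2}+O(r^{-1})$, using that the total event rate is $O(r)$, that $\hat Y^r$ has drift $\mu_i(\rho_i r-\hat y)$ toward $\rho_i r$, and that the coupled move $(\hat y,\tilde y)\to(\hat y-1,\tilde y+1)$ changes the exponent of $V$ by $2r^{-1/2}(1-\kappa)<0$ when $\hat y>\rho_i r$. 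This should give a drift inequality $\mathcal{L}^r V\le -\gamma V+K$ on the whole state space with $\gamma>0$, $K<\infty$ independent of $r$ (the set on which the first-order terms are not negative is $\{\tilde y\le c_1\sqrt r,\ |\hat y-\rho_i r|\le c_2\sqrt r\}$, which is bounded in $r^{-1/2}$-scaled coordinates, so $V$ is bounded there). Such an exponential drift condition, with positive recurrence, forces $\E V(\hat Y^r(\infty),\tilde Y^r(\infty))\le K/\gamma$ uniformly in $r$, and since $V(\hat y,\tilde y)\ge e^{2r^{-1/2}\tilde y}$ we obtain $\E e^{2r^{-1/2}\tilde Y^r(\infty)}\le K/\gamma$, completing the proof.

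The main obstacle is precisely this generator computation: one must keep track of the cross-effects of the coupled transition $(\hat y,\tilde y)\to(\hat y-1,\tilde y+1)$ and of the reflection of $\tilde y$ at $0$, and check that all error terms, as well as the constants $\kappa,\gamma,c_1,c_2$, can be chosen uniformly in $r$. This is the two-dimensional analogue of the single-queue steady-state estimates used in \cite{St2012}, and one could alternatively derive it by reducing to, or mimicking, the argument given there.
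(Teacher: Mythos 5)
Your reduction is sensible and the first factor is handled correctly: $\hat Y^r_i(\cdot)$ is an autonomous $M/M/\infty$ queue, $\hat Y^r_i(\infty)$ is Poisson with mean $\rho_i r$ (a fact the paper also uses), and the MGF bound is routine. Your overall strategy for the second factor --- an exponential Lyapunov function for the two-dimensional chain $(\hat Y^r_i,\tilde Y^r_i)$ together with a generator drift inequality evaluated in steady state --- is exactly the paper's strategy. The gap is in the choice of Lyapunov function: the $\ell_1$-type exponent $\tilde y+\kappa\,|\hat y-\rho_i r|$ does not yield the drift inequality you claim, and the failure is structural, not a matter of bookkeeping.

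Concretely, take a state with $\tilde y=0$ and $\hat y\ge\rho_i r$. The token-creating transition $(\hat y,\tilde y)\to(\hat y-1,\tilde y+1)$, at rate $\mu_i\hat y$, changes your exponent by $2r^{-1/2}(1-\kappa)$, while the arrival transition, at rate $\lambda_i r$, changes it by $+2r^{-1/2}\kappa$ (the $\tilde y$ term cannot go below $0$). The first-order drift of the exponent is therefore $2r^{-1/2}\bigl[\mu_i\hat y-\kappa\mu_i(\hat y-\rho_i r)\bigr]\approx 2\lambda_i\sqrt r>0$ for all $\hat y-\rho_i r$ up to order $r/(\kappa-1)$; a symmetric computation gives the same for $\hat y<\rho_i r$, and at the kink $\hat y=\rho_i r$ the drift stays positive even for $\tilde y$ of order $r$. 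On this region $V$ is unbounded (as large as $e^{2\kappa\sqrt r}$), so no finite $K$ --- let alone one uniform in $r$ --- makes $\mathcal{L}^rV\le-\gamma V+K$ hold. Your description of the bad set as $\{\tilde y\le c_1\sqrt r,\ |\hat y-\rho_i r|\le c_2\sqrt r\}$ is therefore incorrect, and even on that set the positive part of the drift is $\Theta(\sqrt r)$ rather than $O(1)$, which would at best give $\E V=O(\sqrt r)$ and no uniform bound after Cauchy--Schwarz. The root cause is that your exponent is linear in $\tilde y$ with slope $1$ at $\tilde y=0$, so the $O(r)$-rate token-creation transitions are multiplied by an $O(r^{-1/2})$ increment with nothing to cancel them. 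The paper's cure is to take $h(\hat y,\tilde y)=\sqrt{(\hat y-\rho_i r)^2+(\mu_0/\mu_i)\tilde y^2}$ and $G=e^{h/\sqrt r}$: the $\tilde y$-gradient of $h$ is proportional to $\tilde y/h$ and vanishes at the boundary, the two $O(r)$ rates pair against the gradient so that only the difference $\mu_i\hat y-\lambda_i r=\mu_i(\hat y-\rho_i r)$ survives at first order, and the first-order drift of $h$ becomes a negative-definite quadratic divided by $h$, i.e. at most $-c_4h+O(r)/h$. To salvage your decomposition you would have to replace $\tilde y+\kappa|\hat y-\rho_i r|$ by a function quadratic in $\tilde y$ near $\tilde y=0$ and smooth at $\hat y=\rho_i r$ --- at which point you have essentially reconstructed the paper's proof. (A minor further point: $G$ is unbounded, so one must work with truncations $G\wedge e^{b}$ to justify $\E[AG]=0$, as the paper does.)
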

\begin{proof} See Appendix \ref{apdx:sqrt-r-tightness}.
\end{proof}
For our purposes, the following corollary will suffice.

\begin{cor}
\label{cor-tightness}
Consider the sequence (in $r$) of open systems in steady state. Consider any fixed $i$. Then, for any $q>1/2$,
$$
\|r^{-q} (\hat Y^r_i(\infty)-\rho_i r,\tilde Y^r_i(\infty))\| \Longrightarrow 0.
$$
\end{cor}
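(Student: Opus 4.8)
The plan is to deduce Corollary \ref{cor-tightness} from Theorem \ref{THM:SQRT-R-TIGHTNESS} by a routine Chernoff (exponential Markov) bound. Write $W^r = \|(\hat Y_i^r(\infty)-\rho_i r,\tilde Y_i^r(\infty))\|$, so that Theorem \ref{THM:SQRT-R-TIGHTNESS} asserts $\E \exp\{r^{-1/2} W^r\} \le c$ uniformly in $r$. Since convergence in distribution to a deterministic limit (here $0$) is equivalent to convergence in probability, it suffices to show that $\pr(r^{-q} W^r > \veps) \to 0$ as $r\to\infty$, for every fixed $\veps > 0$.

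First I would fix $\veps>0$ and $q>1/2$, and rewrite the event $\{r^{-q} W^r > \veps\}$ as $\{r^{-1/2} W^r > \veps r^{q-1/2}\}$, which in turn equals $\{\exp\{r^{-1/2} W^r\} > \exp\{\veps r^{q-1/2}\}\}$ because $\exp$ is strictly increasing. Applying Markov's inequality to the nonnegative random variable $\exp\{r^{-1/2} W^r\}$ then gives
\[
\pr\left(r^{-q} W^r > \veps\right) \le \frac{\E \exp\{r^{-1/2} W^r\}}{\exp\{\veps r^{q-1/2}\}} \le c\,\exp\{-\veps r^{q-1/2}\}.
\]
Since $q>1/2$, the exponent $q-1/2$ is strictly positive, so $r^{q-1/2}\to\infty$ and the right-hand side tends to $0$ as $r\to\infty$. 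As $\veps>0$ was arbitrary, this shows $r^{-q} W^r \to 0$ in probability, hence $\|r^{-q}(\hat Y^r_i(\infty)-\rho_i r,\tilde Y^r_i(\infty))\| \Longrightarrow 0$, which is the claim.

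There is essentially no obstacle in this step: all the substance is contained in Theorem \ref{THM:SQRT-R-TIGHTNESS}, whose proof is deferred to Appendix \ref{apdx:sqrt-r-tightness}. The only point worth stating explicitly in the write-up is the elementary equivalence of $\implies 0$ and convergence in probability to $0$ when the limit is a constant, which is what lets the Chernoff estimate above close the argument.
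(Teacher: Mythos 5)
Your proof is correct: the Chernoff/Markov bound applied to $\exp\{r^{-1/2}W^r\}$, combined with the uniform bound from Theorem~\ref{THM:SQRT-R-TIGHTNESS} and the fact that $r^{q-1/2}\to\infty$ for $q>1/2$, is exactly the standard deduction, and the paper itself omits any proof of the corollary precisely because it regards this step as immediate. Nothing is missing.
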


Next we show that the property of Corollary~\ref{cor-tightness} holds not just at a given time, but uniformly
on a $O(r^{1-q})$-long interval.

\begin{thm}
\label{thm:local-fluid-tightness}
Consider the sequence (in $r$) of open systems in stationary regime.
Consider any fixed $i$.
Let $q>1/2$ and $T>0$ be fixed. 
\iffalse
Then, as $r\to\infty$, 
\beql{eq-local-fluid-tightness-1}
\sup_{t\in[0,Tr^{1-p}]} \|r^{-p} (\hat Y^r_i(t)-\rho_i r,\tilde Y^r_i(t))\| \Longrightarrow 0.
\end{equation}
Consequently, 
\fi
Then, as $r\to\infty$,
\beql{eq-local-fluid-tightness-weak}
\sup_{t\in[0,Tr^{1-q}]} \|r^{-q} (\hat Y^r_i(t)-\rho_i r,\tilde Y^r_i(t))\| \Longrightarrow 0,
\end{equation}
and, consequently,
\beql{eq-Z-approx-r-weak}
\sup_{t\in[0,Tr^{1-q}]} r^{-q} \|Z^r(t) -r\|\Longrightarrow 0.
\end{equation}
\end{thm}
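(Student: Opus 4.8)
\noindent\textbf{Proof plan for Theorem \ref{thm:local-fluid-tightness}.}
Write $V^r(t):=|\hat Y^r_i(t)-\rho_i r|+\tilde Y^r_i(t)$ (this is $\|(\hat Y^r_i(t)-\rho_i r,\tilde Y^r_i(t))\|$, since the $1$-norm is used and $\tilde Y^r_i\ge 0$). The plan is to upgrade the single-time control of Corollary \ref{cor-tightness} -- equivalently, the uniform exponential moment bound of Theorem \ref{THM:SQRT-R-TIGHTNESS} -- to a bound that is uniform over $[0,Tr^{1-q}]$, by a union bound over unit-length subintervals. Since the process $(\hat Y^r_i(\cdot),\tilde Y^r_i(\cdot))$ is stationary,
\[
\pr\Big(\sup_{t\in[0,Tr^{1-q}]} V^r(t) > x\Big) \le \big(\lceil Tr^{1-q}\rceil+1\big)\,\pr\Big(\sup_{t\in[0,1]} V^r(t) > x\Big),
\]
so it suffices to prove a bound of the form $\pr\big(\sup_{t\in[0,1]} V^r(t) > x\sqrt r\big)\le C e^{-cx}$ for all $x\ge 0$, with $C,c>0$ not depending on $r$. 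Granting this, take $x=x_r$ with $x_r\to\infty$ slowly, e.g. $x_r=\log^2 r$: then $r^{1-q}e^{-cx_r}\to 0$, while $x_r\sqrt r=o(r^q)$ because $q>1/2$, which yields \eqref{eq-local-fluid-tightness-weak}. The time-$0$ state is covered by Theorem \ref{THM:SQRT-R-TIGHTNESS}, which gives $\pr\big(V^r(0)>\tfrac12 x\sqrt r\big)\le c\,e^{-x/2}$ uniformly in $r$, so throughout we may focus on the increment $\sup_{t\in[0,1]}|$(process at $t$) $-$ (process at $0$)$|$.

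For the actual-customer coordinate, recall (as noted after Lemma \ref{lem-complete-state-tight}) that $\hat Y^r_i(\cdot)$ is autonomous: an $M/M/\infty$ queue with arrival rate $\lambda_i r$ and per-customer departure rate $\mu_i$, driven by the Poisson clocks of the construction, with $\hat Y^r_i(\infty)\sim\mathrm{Poisson}(\rho_i r)$. Using the martingale/Ornstein--Uhlenbeck representation
\[
\hat Y^r_i(t)-\rho_i r=(\hat Y^r_i(0)-\rho_i r)e^{-\mu_i t}+\int_0^t e^{-\mu_i(t-s)}\,dM^r_i(s),
\]
where $M^r_i$ is the compensated (arrivals minus departures) martingale with $\langle M^r_i\rangle_t=\lambda_i r\,t+\mu_i\int_0^t \hat Y^r_i(s)\,ds=O(r)$ on $[0,1]$, one bounds $\sup_{t\in[0,1]}\big|\int_0^t e^{-\mu_i(t-s)}dM^r_i(s)\big|$ by controlling the arrival clock and the (random-time-changed) departure clock separately via the same Poisson large-deviation estimates that underlie Lemma \ref{lem:unif-conv} (cf. Lemma~4.3 of \cite{SS2002}). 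This gives $\pr\big(\sup_{t\in[0,1]}|\hat Y^r_i(t)-\hat Y^r_i(0)|>x\sqrt r\big)\le Ce^{-cx}$, hence, together with the bound on $\hat Y^r_i(0)$ above, the required single-interval estimate for $|\hat Y^r_i(t)-\rho_i r|$.

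For the token coordinate, work on the event $G^r_x=\{\sup_{t\in[0,1]}|\hat Y^r_i(t)-\rho_i r|\le x\sqrt r\}$, whose complement has probability $\le Ce^{-cx}$ by the previous step. On $G^r_x$ the token process $\tilde Y^r_i(\cdot)$ has birth rate $\mu_i\hat Y^r_i(t)\le\lambda_i r+\mu_i x\sqrt r$ and, while positive, death rate $\lambda_i r+\mu_0\tilde Y^r_i(t)\ge\lambda_i r$; hence, via the driving Poisson processes, $\tilde Y^r_i(\cdot)$ is stochastically dominated on $[0,1]$ by the birth--death process $W^r(\cdot)$ with $W^r(0)=\tilde Y^r_i(0)$, constant birth rate $\lambda_i r+\mu_i x\sqrt r$, and death rate $\lambda_i r+\mu_0 w$ in state $w>0$. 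The chain $W^r$ has strictly negative drift once $w$ exceeds a level of order $x\sqrt r$; taking the exponential Lyapunov function $\phi(w)=e^{\theta w}$ with $\theta$ of order $r^{-1/2}$ (so that the second-order term in the generator, of order $\theta^2 r$, stays bounded), one checks that $\phi(W^r(t))$ is a supermartingale above that level, and a standard optional-stopping/maximal-inequality argument then gives $\pr\big(\sup_{t\in[0,1]}W^r(t)>x'\sqrt r\big)\le Ce^{-cx'}$. Combined with $\pr\big(\tilde Y^r_i(0)>\tfrac12 x'\sqrt r\big)\le Ce^{-cx'}$ (Theorem \ref{THM:SQRT-R-TIGHTNESS}) and with $\pr((G^r_x)^c)\le Ce^{-cx}$, and after relabeling constants, this yields $\pr\big(\sup_{t\in[0,1]}\tilde Y^r_i(t)>x\sqrt r\big)\le Ce^{-cx}$, completing the proof of the single-interval bound for $V^r$ and hence of \eqref{eq-local-fluid-tightness-weak}. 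Finally, \eqref{eq-Z-approx-r-weak} is immediate: $Z^r(t)=\sum_{i\in\ci}\hat Y^r_i(t)$ and $\sum_{i\in\ci}\rho_i=1$, so $Z^r(t)-r=\sum_{i\in\ci}(\hat Y^r_i(t)-\rho_i r)$, and applying \eqref{eq-local-fluid-tightness-weak} to each of the finitely many $i\in\ci$ and summing gives $\sup_{t\in[0,Tr^{1-q}]}r^{-q}|Z^r(t)-r|\Longrightarrow 0$. The main obstacle is the token coordinate: the token ``queue'' is critically loaded at scale $r$ (birth and death rates both of order $\lambda_i r$), so obtaining the clean $e^{-cx}$ single-interval bound requires the coupling to the dominating chain $W^r$ on the high-probability event $G^r_x$ together with a careful Lyapunov estimate in which every $r$-dependence is tracked; by contrast, the $M/M/\infty$ coordinate is comparatively routine.
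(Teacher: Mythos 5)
Your proof is correct in outline, but it takes a genuinely different route from the paper's. The paper proves \eqref{eq-local-fluid-tightness-weak} by a soft, qualitative argument: it passes to a.s.-convergent subsequences, builds a special probability space with a strong (Brownian) approximation of the driving Poisson processes to get the uniform control \eqref{eq-key-for-tightness-proof}, establishes a \emph{local fluid limit} (Lemma \ref{lem-local-fluid-conv}) showing that any limit of $g^r$ under the $r^{-q}$ scaling is a Lipschitz trajectory of the ODEs \eqref{eq-yhat}--\eqref{eq-ytilde} that contracts to $0$, and then derives a contradiction at the first exit time $\tau^r$ of $\|g^r\|$ from the $\epsilon$-ball. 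You instead exploit stationarity to reduce to a single unit interval via a union bound over $O(r^{1-q})$ translates, and prove a quantitative single-interval maximal tail bound $\pr(\sup_{[0,1]}V^r>x\sqrt r)\le Ce^{-cx}$: Freedman-type martingale estimates for the autonomous $M/M/\infty$ coordinate $\hat Y^r_i$, and, for the token coordinate, a pathwise domination by a birth--death chain with negative drift above level $O(x\sqrt r)$ together with an exponential supermartingale at scale $\theta\sim r^{-1/2}$ (whose initial value $\E e^{\theta\tilde Y^r_i(0)}$ is controlled precisely by Theorem \ref{THM:SQRT-R-TIGHTNESS}). Both arguments rest on the stationary marginal bound of Theorem \ref{THM:SQRT-R-TIGHTNESS}/Corollary \ref{cor-tightness} to pin down the time-$0$ state. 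What your route buys is a rate: taking $x_r=\log^2 r$ gives $\sup_{[0,Tr^{1-q}]}V^r=O(\sqrt r\,\log^2 r)$ with probability tending to $1$, i.e., all $q>1/2$ at once, and it avoids both the strong approximation and the local-fluid-limit machinery. What the paper's route buys is that it never needs uniform-in-$r$ exponential maximal inequalities on unit intervals (only law-of-large-numbers-type control of the clocks), and its ODE analysis makes the drift structure of the token dynamics transparent; it also reuses arguments from \cite{SY2012} nearly verbatim. Two places where your sketch leans on "standard" facts that do require care if written out: the single-interval bound for the $M/M/\infty$ increment must be argued via the compensated martingale (the raw arrival count over $[0,1]$ is of order $r$, not $\sqrt r$), with the quadratic variation bounded by $O(r)$ on a high-probability event; and the optional-stopping step for the dominating chain $W^r$ must handle excursions below the negative-drift region, e.g., by bounding $\mathcal{L}e^{\theta w}$ uniformly by $O(x e^{\theta Kx\sqrt r})$ below the level and integrating. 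Both are routine, so I see no genuine gap.
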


Clearly, the statement of Theorem~\ref{thm:local-fluid-tightness} is equivalent to the following one:
{\em Any subsequence 
of $\{r\}$ contains a further subsequence along which w.p.1,
\beql{eq-local-fluid-tightness}
\sup_{t\in[0,Tr^{1-q}]} \|r^{-q} (\hat Y^r_i(t)-\rho_i r,\tilde Y^r_i(t))\|\to 0,
\end{equation}
and then
\beql{eq-Z-approx-r}
\sup_{t\in[0,Tr^{1-q}]} r^{-q} \|Z^r(t) -r\|\to 0.
\end{equation}
}
In turn, to prove the latter statement it suffices to show that  {\em there exists a construction of the underlying probability space,
for which the statement holds.} 
%COM We will use this "freedom" of choosing the probability space in the proof of Theorem~\ref{thm:local-fluid-tightness}.

\iffalse
COM: THIS REQUIRES SOME WORK, WHICH IS SKETCHED NEXT.
BASICALLY, WE START FROM LEMMA 15 IN \cite{St2012}, AND PROCEEDS 
AS IN \cite{SY2012}, BUT OUR CASE HERE IS MUCH EASIER. 
THEOREM~\ref{thm:local-fluid-tightness} IS WHAT I
THOUGHT REQUIRED $p>2/3$, BUT NOW I THINK $p>1/2$ IS GOOD ENOUGH. ENDCOM
\fi

We will need some estimates, which can be obtained 
from a strong approximation of Poisson processes, 
available in, for example, \cite[Chapters 1 and 2]{Csorgo_Horvath}: 
\begin{prop}\label{thm:strong approximation-111-clean}
A unit rate Poisson process $\Pi(\cdot)$ and 
a standard Brownian motion $W(\cdot)$ can be constructed on a common
probability space in such a way that the following holds.
For some fixed positive constants $C_1$, $C_2$, $C_3$,   such that 
$\forall T>1$ and
$\forall u \geq 0$
\[
\pr\left(\sup_{0 \leq t \leq T} |\Pi(t) - t - W(t)| \geq C_1 \log T + u\right) \leq C_2 e^{-C_3 u}.
\]
\end{prop}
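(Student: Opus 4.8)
The statement is, up to cosmetic rephrasing, the classical Komlós--Major--Tusn\'ady (KMT) strong approximation of a Poisson process by Brownian motion, in the form with an exponential tail bound; this is exactly the kind of estimate collected in \cite[Chapters 1 and 2]{Csorgo_Horvath}. Accordingly, the plan is to derive Proposition~\ref{thm:strong approximation-111-clean} from the KMT theorem for partial sums of i.i.d.\ random variables whose moment generating function is finite near the origin. The starting observation is that $\Pi(\cdot)$, evaluated at integer times, is a centered random walk: $\Pi(n)-n=\sum_{j=1}^n \xi_j$, where the $\xi_j$ are i.i.d., distributed as a Poisson$(1)$ variable minus its mean, so that they have mean $0$, variance exactly $1$, and a moment generating function finite on all of $\R$. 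The KMT partial-sum theorem then furnishes, on a common probability space, a \emph{standard} Brownian motion $W(\cdot)$ and absolute constants $a_1,a_2,a_3>0$ with
\[
\pr\Big(\max_{1\le k\le n}\big|\Pi(k)-k-W(k)\big|\ge a_1\log n + v\Big)\le a_2 e^{-a_3 v},\qquad \forall\, n\ge 1,\ v\ge 0.
\]
This settles the claim at integer times; the rest of the work is interpolation to all $t\in[0,T]$ while preserving the shape of the bound.

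For the interpolation, for $t\in[n,n+1)$ with $n\le\lfloor T\rfloor$ I would use monotonicity of $\Pi$ and $|t-n|\le 1$ to write
\[
\big|\Pi(t)-t-W(t)\big|\le \big|\Pi(n)-n-W(n)\big| + \big(\Pi(n+1)-\Pi(n)\big) + 1 + \sup_{s\in[n,n+1]}|W(s)-W(n)|.
\]
The first term is controlled by the displayed KMT bound. For the second, a union bound over $n\le T$ together with the Chernoff estimate $\pr(\Pi(n+1)-\Pi(n)\ge v)\le \kappa e^{-v}$ (an absolute constant $\kappa$, since this is a Poisson$(1)$ variable) gives $\pr\big(\max_{n\le T}(\Pi(n+1)-\Pi(n))\ge b_1\log T + v\big)\le \kappa\, T^{1-b_1} e^{-v}\le \kappa e^{-v}$ once $b_1\ge 1$ and $T>1$. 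For the third, the reflection principle gives $\pr(\sup_{s\in[0,1]}|W(s)|\ge v)\le \kappa' e^{-v^2/2}$, and a union bound over the at most $T+1$ unit intervals yields $\pr\big(\max_{n\le T}\sup_{s\in[n,n+1]}|W(s)-W(n)|\ge b_2\log T + v\big)\le \kappa'(T+1)\,e^{-(b_2\log T+v)^2/2}$, which for $b_2$ large enough (depending only on absolute constants) is at most $\kappa'' e^{-v}$ uniformly over $T>1$ and $v\ge 0$. The additive $1$ is absorbed into the $\log T$ budget.

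Finally I would assemble the three pieces: set $C_1=a_1+b_1+b_2+1$, so that $C_1\log T$ dominates the sum of the three separate $\log T$ budgets and the constant (using $T>1$); split a given threshold $u\ge 0$ as $u/3$ for each of the three events; and take a union bound. The resulting probability is then $\le C_2 e^{-C_3 u}$ with $C_2,C_3$ depending only on $a_1,a_2,a_3,\kappa,\kappa',\kappa''$, hence independent of $T$ and $u$, which is the claim. The only mildly delicate point --- and the step I would be most careful about --- is this bookkeeping: one must verify that the $T$-prefactors generated by the two union bounds are genuinely swallowed by the $\log T$ term, which forces $b_1,b_2$ (hence $C_1$) to be chosen large relative to the decay rates, and that the final constants depend on nothing else. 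There is no substantive difficulty here, just an ordering-of-quantifiers check. Alternatively, if a fully self-contained derivation is not desired, one may simply invoke the strong approximation of the Poisson process stated directly in \cite[Chapters 1 and 2]{Csorgo_Horvath} and observe that Proposition~\ref{thm:strong approximation-111-clean} is an immediate rephrasing of it.
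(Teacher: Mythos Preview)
Your proposal is correct, and in fact goes well beyond what the paper does: the paper does not prove this proposition at all but simply states it as a known estimate ``available in, for example, \cite[Chapters 1 and 2]{Csorgo\_Horvath}''. Your closing sentence --- that one may just invoke the strong approximation of the Poisson process stated directly in that reference --- is precisely the paper's entire treatment. The detailed KMT-plus-interpolation argument you outline is a legitimate and standard way to derive the result from first principles, but it is not something the paper attempts; for the paper's purposes the proposition is a black-box citation.
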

If in the above statement we replace $T$ with $rT$, and $u$ with $r^{1/4}$, we obtain
\begin{align}
& \pr\left(\sup_{0 \leq t \leq rT} |(\Pi(t) - t) - W(t)| < C_1 \log (rT) + r^{1/4}\right) \nonumber \\
& > 1- C_2 e^{-C_3 r^{1/4}}. \label{eq-add1}
\end{align}
Note also that for a fixed $\delta\in (0,q-1/2)$ and all large $r$,
\beql{eq-add2}
\pr\left(\sup_{0 \leq t \leq rT} |W(t)| \le r^{1/2+\delta}\right) \ge 1- e^{cr^{2\delta}}
\end{equation}
for some constant $c>0$. If events in \eqn{eq-add1} and  \eqn{eq-add2} hold for all large $r$, then
\beql{eq-add3}
\sup_{0 \leq t \leq rT} r^{-q}|\Pi(t)-t| \to 0.
\end{equation}

To prove Theorem~\ref{thm:local-fluid-tightness}, consider the following construction
of the probability space. (We want to strongly emphasize that this construction 
will be used only for the purpose of proving Theorem~\ref{thm:local-fluid-tightness}. 
For the proof of Theorem~\ref{th-main-res-open}, we can and will use
%in other parts of the paper we are free to choose 
a different probability space
construction.)
% COM: REWRITE Theorem~\ref{thm:local-fluid-tightness} IN TERMS OF CONVERGENCE IN DISTRIBUTION.
For each $r$,
we divide the time interval 
$[0,Tr^{1-q}]$ into $r^{1-q}$ of  $T$-long subintervals, namely
$[(m-1)T,mT]$ with $m=1,2,\ldots,r^{1-q}$. In each of the subintervals,
and for each $r$, we consider independent
unit rate Poisson processes $\Pi_i^{r,m}$, $\hat \Pi_i^{r,m}$, $\tilde \Pi_i^{r,m}$,
driving type $i$ exogenous arrivals, actual customer departures
and token departures, respectively. 
More precisely, the number  of type $i$ exogenous arrivals, actual customer departures
and token departures, by time $t$ from the beginning of the $m$-th 
interval is given by
$$
\Pi_i^{r,m}(\lambda_i r t), ~~~ \hat \Pi_i^{r,m}\left(\int_0^t {\mu_i}\hat Y_i^r(\xi)d\xi\right), 
~~~\tilde \Pi_i^{r,m}\left(\int_0^t {\mu_0}\tilde Y_i^r(\xi)d\xi\right),
$$
respectively.
%For example, the unit Poisson
%process driving exogenous arrivals of $i$-customers in the $m$-th subinterval
%is $\Pi_i^{r,m}$. 
Using \eqn{eq-add1}-\eqn{eq-add3}
%COM: THE USE OF STRONG APPROX. IS {\bf DIFFERENT} HERE FROM THAT IN \cite{SY2012} --
%WE APPLY IT FOR EACH $r$ and subinterval $m$ SEPARATELY. 
%IT IS EXACTLY THIS DIFFERENT USE OF STRONG APPROXIMATION
%THAT WORKS WITH WEAKER CONDITION $p>1/2$, AS OPPOSED TO $p>2/3$.
%ENDCOM
%\\
we obtain the following property for $\Pi_i^{r,m}$ (and analogous ones for $\hat \Pi_i^{r,m}$ and $\tilde \Pi_i^{r,m}$): 
\beql{eq-key-for-tightness-proof}
\max_{1 \le m \le r^{1-q}} ~~\max_{0\le t \le rT} |\Pi_i^{r,m}(t) - t|/r^{q}
\to 0, ~~\mbox{as $r\to\infty$, ~~w.p.1}.
\end{equation}
%And we have analogous properties for other driving Poisson processes. COM: SPELL OUT.
%HERE WE NEED DRIVING PROCESSES ONLY FOR ARRIVALS, DEPARTURES AND TOKEN DEPARTURES
%-- WE CAN AND DO IGNORE PACKING CONSIDERATIONS HERE.

\iffalse
Now, using \eqn{eq-key-for-tightness-proof} 
\\ COM: AND SIMILAR. AND ALSO WE NEED
THE PRELIMINARY FLUID LIMIT RESULT, LEMMA 15 IN \cite{St2012} ENDCOM \\
we 
can use the approach in \cite{SY2012} to establish 
\eqn{eq-local-fluid-tightness}, along the chosen subsequence of $r$.
(In fact, the ``implementation'' of the approach in the setting 
of our Theorem~\ref{thm:local-fluid-tightness} is much simpler than 
that in the proof of the main result of\cite{SY2012}. 
In particular, we do not need to use what is called
``hydrodynamic limits'' there.) COM: THIS NEEDS AT LEAST A SKETCH. LATER. ENDCOM
\fi

We denote 
$$
g^r(t) = (\hat y^r_i(t),\tilde y^r_i(t)) = r^{-q} (\hat Y^r_i(t)-\rho_i r,\tilde Y^r_i(t)).
$$
Then, we can prove the following.
\begin{lem}
\label{lem-local-fluid-conv}
Consider fixed realizations (for each $r$) of driving processes, such that 
the properties \eqn{eq-key-for-tightness-proof} hold 
with $q$ replaced by a smaller parameter $q'\in (1/2,q)$. 
Consider the corresponding sequence of realizations of $(g^r(t), ~t\ge 0)$,
with bounded initial states
$\|g^r(0)\|\le \epsilon$, $\epsilon>0$.
Then, there exists a subsequence of $r$ along which
\beql{eq-conv-to-local-fluid}
g^r(t) \to g(t), ~~~\mbox{u.o.c.},
\end{equation}
where $(g(t), ~t\ge 0)$ is Lipschitz continuous,
 with $\|g(0)\|\le\epsilon$, and it satisfies conditions
\beql{eq-yhat}
(d/dt) \hat y_i(t) = - \mu_i \hat y_i(t),
\end{equation}
\beql{eq-ytilde}
(d/dt) \tilde y_i(t) = \left\{ \begin{array}{ll}
                \mu_i \hat y_i(t) - \mu_0 \tilde y_i(t), 
                & \mbox{if}~\tilde y_i(t)>0\\
               \max\{0, \mu_i \hat y_i(t) - \mu_0 \tilde y_i(t)\}, 
                & \mbox{if}~\tilde y_i(t)=0
                \end{array}
       \right.
\end{equation}
at points $t\ge 0$, where the derivatives exist (which is almost everywhere
w.r.t. the Lebesgue mesure).
Moreover, the convergence
\beql{eq-y-converge}
\|g(t)\| \to 0, ~~t\to \infty,
\end{equation}
holds and is uniform w.r.t. initial states with $\|g(0)\|\le \epsilon$, and %COM THIS IS A SEPARATE PROPERTY
\beql{eq-y-bounded}
\sup_{\|g(0)\|\le \epsilon} \max_{t\ge 0} \|g(t)\| \to 0, ~~\epsilon \to 0.
\end{equation}
As a consequence of \eqn{eq-y-bounded},
\beql{eq-y-equilibrium}
\|g(0)\| = 0 ~~\mbox{implies}~~ 
\|g(t)\| = 0, ~\forall t.
\end{equation}
\end{lem}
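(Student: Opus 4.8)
The plan is to run a standard fluid-limit argument; the only non-routine ingredient is the one-sided reflection governing $\tilde y_i$. Throughout I work on one generic $T$-long subinterval, writing $\Pi_i^r,\hat\Pi_i^r,\tilde\Pi_i^r$ for the (restricted, re-indexed) driving processes and $g^r(0)$ for the initial state, with $\|g^r(0)\|\le\veps$. First I would record the exact Poisson-driven representation. Since the actual type-$i$ customer count is unaffected by the token mechanism, $\hat Y_i^r(\cdot)$ is an $M/M/\infty$-type process,
\[
\hat Y_i^r(t)=\hat Y_i^r(0)+\Pi_i^r(\lambda_i r t)-\hat\Pi_i^r\Big(\int_0^t\mu_i\hat Y_i^r(\xi)\,d\xi\Big),
\]
while, writing $B_i^r(t)$ for the number of type-$i$ exogenous arrivals in $[0,t]$ that find no type-$i$ token present (so $B_i^r$ is nondecreasing and increases only when $\tilde Y_i^r=0$),
\[
\tilde Y_i^r(t)=\tilde Y_i^r(0)+\hat\Pi_i^r\Big(\int_0^t\mu_i\hat Y_i^r(\xi)\,d\xi\Big)-\Pi_i^r(\lambda_i r t)+B_i^r(t)-\tilde\Pi_i^r\Big(\int_0^t\mu_0\tilde Y_i^r(\xi)\,d\xi\Big).
\]
Substituting $\hat Y_i^r=\rho_i r+r^q\hat y_i^r$, $\tilde Y_i^r=r^q\tilde y_i^r$, using $\lambda_i=\mu_i\rho_i$, and dividing by $r^q$, the $O(r^{1-q})$ terms cancel exactly, leaving
\[
\hat y_i^r(t)=\hat y_i^r(0)-\int_0^t\mu_i\hat y_i^r(\xi)\,d\xi+\mathcal E_1^r(t),
\]
\[
\tilde y_i^r(t)=\tilde y_i^r(0)+\int_0^t\big(\mu_i\hat y_i^r(\xi)-\mu_0\tilde y_i^r(\xi)\big)\,d\xi+r^{-q}B_i^r(t)+\mathcal E_2^r(t),
\]
where, using the crude a priori bounds $\hat Y_i^r(t)\le\hat Y_i^r(0)+\Pi_i^r(\lambda_i rt)=O(r)$ and $\tilde Y_i^r(t)\le\tilde Y_i^r(0)+\hat\Pi_i^r(\int_0^t\mu_i\hat Y_i^r)=O(r)$ (valid on the given realizations for $t\le T$) together with property \eqn{eq-key-for-tightness-proof} at the exponent $q'<q$, one gets $\sup_{t\le T}(|\mathcal E_1^r(t)|+|\mathcal E_2^r(t)|)\to0$.

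Next I would obtain uniform bounds and equicontinuity on $[0,T]$. Gronwall applied to the first identity gives $\sup_{t\le T}|\hat y_i^r(t)|\le(\veps+o(1))e^{\mu_i T}$; for the second, since $r^{-q}B_i^r$ is nondecreasing and grows only when $\tilde y_i^r=0$, a one-sided Skorokhod comparison bounds $\sup_{t\le T}\tilde y_i^r(t)$, and hence $\sup_{t\le T}r^{-q}B_i^r(t)$, by a quantity depending only on $\veps$, $\sup_{t\le T}|\hat y_i^r|$, $T$ and $\sup_{t\le T}|\mathcal E_2^r|$. Reading off from the integral identities the estimate $|g^r(t')-g^r(t)|\le C|t'-t|+o(1)+O(r^{-q})$ for $t,t'\in[0,T]$, I get uniform boundedness and asymptotic equicontinuity of $\{g^r(\cdot)\}$ and of $\{r^{-q}B_i^r(\cdot)\}$; since $\{g^r(0)\}$ is bounded, Arzel\`a--Ascoli (the $O(r^{-q})$ jumps vanishing) yields a subsequence along which $g^r\to g$ and $r^{-q}B_i^r\to b$ u.o.c., with $g$ continuous, $\|g(0)\|\le\veps$, $\tilde y_i\ge0$, and $b$ continuous nondecreasing. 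Passing to the limit in the integral identities gives $\hat y_i(t)=\hat y_i(0)-\int_0^t\mu_i\hat y_i$ and $\tilde y_i(t)=\tilde y_i(0)+\int_0^t(\mu_i\hat y_i-\mu_0\tilde y_i)+b(t)$. A short argument --- if $\tilde y_i>0$ throughout a subinterval, then $\tilde Y_i^r>0$ there for all large $r$ in the subsequence, so $B_i^r$, hence $b$, is constant there --- shows $b$ increases only where $\tilde y_i=0$. Thus $(\tilde y_i,b)$ solves the one-sided Skorokhod problem, whose unique solution is precisely \eqn{eq-ytilde}; and $\hat y_i(t)=\hat y_i(0)e^{-\mu_i t}$, which is \eqn{eq-yhat}. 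Lipschitz continuity of $g$ follows since $\hat y_i$ is $C^1$ and $\tilde y_i$ is the image, under the (Lipschitz) one-sided reflection map, of the Lipschitz function $t\mapsto\tilde y_i(0)+\int_0^t(\mu_i\hat y_i-\mu_0\tilde y_i)$.

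Finally, \eqn{eq-y-converge}--\eqn{eq-y-equilibrium} concern only the limiting ODE. From $\hat y_i(t)=\hat y_i(0)e^{-\mu_i t}$ we have $|\hat y_i(t)|\le\veps e^{-\mu_i t}$. Since $\tilde y_i\ge0$, a comparison using \eqn{eq-ytilde} shows $\tilde y_i(t)\le z(t)$, where $\dot z=\mu_i|\hat y_i|-\mu_0 z$ and $z(0)=\tilde y_i(0)\le\veps$; solving explicitly gives $z(t)\le C\veps(1+t)e^{-\underline\mu t}$ with $\underline\mu=\min\{\mu_i,\mu_0\}$, hence $\|g(t)\|\le C\veps(1+t)e^{-\underline\mu t}$. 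This gives \eqn{eq-y-converge} uniformly over $\|g(0)\|\le\veps$, then \eqn{eq-y-bounded} since $\sup_{t\ge0}(1+t)e^{-\underline\mu t}<\infty$, and then \eqn{eq-y-equilibrium} by letting $\veps\downarrow0$. I expect the main obstacle to be the bookkeeping around the reflection term $B_i^r$: keeping the a priori bound on $\tilde y_i^r$ non-circular, and correctly identifying the limit as the reflected ODE \eqn{eq-ytilde} rather than the free one; the $M/M/\infty$ behavior of $\hat y_i$ is routine.
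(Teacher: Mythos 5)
The paper does not actually prove this lemma: it only identifies the main difficulty (the transition rates are $O(r)$ while the space is scaled by $r^q$, so the Lipschitz property of the limit is not automatic) and defers to the method of Theorem 23 in \cite{SY2012}. Your write-up therefore supplies an argument the paper omits, and it follows exactly the route the authors sketch: exact Poisson representation, cancellation of the $O(r^{1-q})$ terms using $\lambda_i=\mu_i\rho_i$, a one-sided reflection description of the token count, Arzel\`a--Ascoli, and explicit solution of the limiting ODE for \eqn{eq-y-converge}--\eqn{eq-y-equilibrium}. All of this is correct. The one step you must make explicit is the asymptotic equicontinuity of $\tilde y_i^r$ and of $r^{-q}B_i^r$: it cannot literally be ``read off from the integral identities,'' because the only direct bound on $B_i^r(t')-B_i^r(t)$ is the number of arrivals in $[t,t']$, which is $O(r)\,|t'-t|$, i.e.\ $O(r^{1-q})\,|t'-t|$ after scaling --- this is precisely the obstruction the paper flags. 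The repair is already in your toolbox: since $(\tilde y_i^r,\,r^{-q}B_i^r)$ solves the one-sided Skorokhod problem for $\psi^r(t)=\tilde y_i^r(0)+\int_0^t(\mu_i\hat y_i^r-\mu_0\tilde y_i^r)\,d\xi+\mathcal E_2^r(t)$, the explicit regulator formula $r^{-q}B_i^r(t)=0\vee\sup_{s\le t}(-\psi^r(s))$ bounds the increments of $r^{-q}B_i^r$, and hence of $\tilde y_i^r$, by the oscillation of $\psi^r$ over $[t,t']$, which is $C|t'-t|+o(1)$ once the sup bounds from your Gronwall step are in place. With that sentence added, the proof is complete and matches the intended argument.
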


Lemma~\ref{lem-local-fluid-conv} is analogous to Lemma 14 in \cite{St2012}, except that
the %COM BETTER NOT CALL IT LOCAL FLUID, BECAUSE IT"S NOT EXACTLY SAME AS DEFINED EARLIER
%(local fluid) 
space scaling by $r^{-q}$ is applied, as opposed to the fluid scaling
by $r^{-1}$, and the number of actual customers $\hat Y^r_i(t)$ is centered
before scaling.
The proof is somewhat more involved -- the main issue is that 
(unlike for the fluid limit) 
the Lipschitz property of the limit is no longer automatic, because
the rates of arrivals and departures in the system are $O(r)$, while the space 
is only scaled down by $r^q$. (That is why we need to use properties
\eqn{eq-key-for-tightness-proof}, as opposed to simply a strong law of large numbers.)
However, this issue can be resolved as in, for example,
the proof of Theorem 23 in \cite{SY2012}. We omit a detailed proof. 
%COM: MAY NEED  TO ADD IT.
\newline\newline
{\bf Proof of Theorem \ref{thm:local-fluid-tightness}.} 
By Corollary~\ref{cor-tightness}, 
we can choose a subsequence of $r$ (increasing sufficiently fast) so that
$$
\|g^r(0)\|\to 0, ~~\mbox{w.p.1}.
$$
Then, we use the construction of the probability space specified above,
which guarantees that w.p.1
the properties \eqn{eq-key-for-tightness-proof} hold 
with $q$ replaced by a smaller parameter $q'\in (1/2,q)$ --
let us consider any element of the probability space for which  the properties \eqn{eq-key-for-tightness-proof} do hold.
We claim that, for this element, \eqn{eq-local-fluid-tightness} holds. 
Suppose not. 
Then, there exists $\epsilon>0$ and a further subsequence of $r$, along which
$\tau^r=\min\{t ~|~ \|g^r(t)\|>\epsilon\} \le  Tr^{1-q}$.
By Lemma~\ref{lem-local-fluid-conv}, we can and do choose time duration $T_1>0$
such that any limit trajectory $g(t)$ with $\|g(0)\|\le \epsilon$ satisfies
$\|g(T_1)\|\le \epsilon/2$. For each $r$, consider the trajectory
of $g^r$ on the time interval $[\tau^r-T_1,\tau^r]$. (Suppose for now that
$\tau^r \ge T_1$ for all sufficiently large $r$.) Then we can choose 
a further subsequence of $r$ along which 
%the uniform in $[0,T_1]$ convergence
$g^r(\tau^r-T_1 +t)\to g(t)$ uniformly for $t\in [0,T_1]$, for a limit function $g(t)$ as in 
Lemma~\ref{lem-local-fluid-conv}. But, this is impossible because then
$\|g^r(\tau^r)\| \to \|g(T_1)\| \le \epsilon/2$. The case when
$\tau^r <T_1$ for infinitely many $r$ is even simpler: we choose 
a further subsequence along which this is true, and consider the 
trajectories of $g^r$ on the fixed time interval $[0,T_1]$.
In this case any limit trajectory $g(t)$ described in Lemma~\ref{lem-local-fluid-conv}
stays at $0$ in the entire interval $[0,T_1]$, because $\|g(0)\|= \lim_r \|g^r(0)\|=0$.
This means that $\|g^r(\tau^r)\| \to 0$, again a contradiction.
\hfill\(\Box\)

\iffalse
COM: NOTE THAT WHATEVER THE CONSTRUCTION OF THE PROBABILITY SPACE 
THAT WAS USED FOR THE PROOF OF THEOREM \ref{thm:local-fluid-tightness},
IT IS ``INTERNAL'' TO THAT PROOF -- WE DO NOT HAVE TO ASSUME IT IN THE
REST OF THE OPEN SYSTEM ANALYSIS. \\
IN PARTICULAR, IN THE REST OF THE ANALYSIS
WE CAN ASSUME THAT THE DRIVING POISSON PROCESS (FOR THE ACTUAL CUSTOMER DEPARTURES,
TOKEN DEPRTURES AND EXOGENOUS ARRIVALS) SATISFY LEMMA 
\ref{lem:unif-conv} -- SEE BELOW. \\
WHATEVER THE PROBABILITY SPACE CONSTRUCTION IS USED
FOR WHAT FOLLOWS, THEOREM \ref{thm:local-fluid-tightness} STILL ALLOWS US TO SAY:
WE CHOOSE A SUBSEQUENCE OF $r$ ALONG WHICH THE PROB. 1 PROPERTIES OF 
TH \ref{thm:local-fluid-tightness} HOLD. FOR EXAMPLE, WHEN WE CONSIDER 
LOCAL FLUID LIMITS, WE CAN CLAIM THAT, IN THE LIMIT, 
THE WEIGHT $1 \wedge \frac{X}{(Z^r)^p}$ BECOMES $1 \wedge \frac{X}{r^p}$,
AS WE HAD FOR THE CLOSED SYSTEM.
ENDCOM
\fi

From this point on, we assume the following structure of the probability space.
(It is different from the one used for the proof of Theorem~\ref{thm:local-fluid-tightness},
which, as we discussed, was  for that proof only.)
There are common (for all $r$) unit rate Poisson processes driving the system,
defined as follows.
For each $(\bk,i)\in \cm$ and $\hat \bk \le \bk$, consider independent
unit-rate Poisson process $\hat \Pi_{(\bk,\hat \bk), i}(t), ~t\ge 0$,
so that the number of actual type $i$ customer departures from configuration $(\bk,\hat \bk)$
in the interval $[0,t]$
is equal to $\hat \Pi_{(\bk,\hat \bk), i}\left(\int_0^t {\mu_i} \hat k_i X_{(\bk,\hat \bk)}^r(\xi)d\xi\right)$.
Similarly, consider independent
unit-rate Poisson process $\left\{\tilde \Pi_{(\bk,\hat \bk), i}(t), ~t\ge 0\right\}$, so that
the number of type $i$ token departures 
from configuration $(\bk,\hat \bk)$ due to their expiration, 
is equal to 
$\tilde \Pi_{(\bk,\hat \bk), i}\left(\int_0^t {\mu_0}(k_i-\hat k_i) X_{(\bk,\hat \bk)}^r(\xi)d\xi\right)$.
Finally, for each $i\in \ci$, let $\{\Pi_i(t), ~t\ge 0\}$ be an independent
unit-rate Poisson process, such that the number of exogenous type $i$ arrivals
in $[0,t]$ is equal to $\Pi_i(\lambda_i r t)$.
For a fixed parameter $T>0$, whose value will be chosen later,
each of the above Poisson processes satisfies 
Lemma~\ref{lem:unif-conv}, in which we can and do replace $T$ with
$2T [(\bar \mu \vee \mu_0)+ \sum_i \lambda_i]$. 
(We do this because we will ``work'' with system sample paths such that
$\sum_i \hat Y_i = \sum_i (\hat Y_i^r +\tilde Y_i^r) < 2r$,
and for these sample paths the total ``instantaneous'' rate of all transitions
is upper bounded by $2 r [(\bar \mu \vee \mu_0)+ \sum_i \lambda_i]$.)

Denote by $\tilde D^r_i(t_1,t_2)$ the number of type-$i$ token departures (due to their expirations),
and by $\hat A^{**,r}_i(t_1,t_2)$ the total number of exogenous type-$i$ arrivals (of actual customers)
that do {\em not} replace type-$i$ tokens, all in the interval $(t_1,t_2]$.
Also, denote $Y^r_i(t_1,t_2)=Y^r_i(t_2)-Y^r_i(t_1)$.

\begin{thm}
\label{thm:non-typical-transitions}
Consider the sequence (in $r$) of open systems in stationary regime. 
Let $T>0$ be fixed. Then, any subsequence 
of $r$ contains a further subsequence such that, w.p.1, the following holds:
\beql{eq-non-typical-departures}
\tilde D^r_i(t_0,t_0+r^{p-1})/[r^p r^{p-1}]\to 0,
\end{equation}
\beql{eq-non-typical-arrivals}
\hat A^{**,r}_i(t_0,t_0+r^{p-1})/[r^p r^{p-1}]\to 0,
\end{equation}
uniformly on all intervals $[t_0,t_0+r^{p-1}]\subset [0,Tr^{1-p}]$.
\end{thm}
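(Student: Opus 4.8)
\noindent{\bf Plan of proof.} The plan is to treat \eqn{eq-non-typical-departures} and \eqn{eq-non-typical-arrivals} separately, reducing each — on a suitably chosen subsequence — to quantities already under control. First I would fix $i$ and, given an arbitrary subsequence of $\{r\}$, apply Theorem~\ref{thm:local-fluid-tightness} with a fixed $q\in(1/2,p)$ (so $[0,Tr^{1-p}]\subseteq[0,Tr^{1-q}]$) to pass to a further subsequence along which, w.p.1, $\sup_{t\in[0,Tr^{1-p}]}\tilde Y_i^r(t)=o(r^{q})$ and $\hat Y_i^r(t)\ge\tfrac12\rho_i r$ for all $t\in[0,Tr^{1-p}]$ and all large $r$; I would then work on the intersection of this event with the probability-one event of Lemma~\ref{lem:unif-conv}, which applies to each driving Poisson process $\hat\Pi_{(\bk,\hat\bk),i}$, $\tilde\Pi_{(\bk,\hat\bk),i}$, $\Pi_i$. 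For \eqn{eq-non-typical-departures}: for each complete configuration $(\bk,\hat\bk)$ with $k_i>\hat k_i$, over the window $[t_0,t_0+r^{p-1}]$ the clock of $\tilde\Pi_{(\bk,\hat\bk),i}$ advances by at most $\mu_0 r^{p-1}\sup_{\xi}\tilde Y_i^r(\xi)=o(r^{p-1+q})=o(r^{2p-1})$ (here $q<p$), while the total clock advanced over $[0,Tr^{1-p}]$ is $o(r)$. By monotonicity of a unit-rate Poisson process and Lemma~\ref{lem:unif-conv} (used with increments $\xi r^{2p-1}$, $\xi=o(1)$), every such window-increment of $\tilde\Pi_{(\bk,\hat\bk),i}$ is $o(r^{2p-1})$, uniformly over $[t_0,t_0+r^{p-1}]\subseteq[0,Tr^{1-p}]$; summing over the finitely many $(\bk,\hat\bk)$ yields \eqn{eq-non-typical-departures}.

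For \eqn{eq-non-typical-arrivals} I would exploit the ``slow'' variable $Y_i^r:=\hat Y_i^r+\tilde Y_i^r$. Under {\em GSS-M}, $Y_i^r$ is unchanged by an actual departure (which merely creates a token) and by a token-replacing arrival, and changes only at the non-typical transitions: $+1$ at each non-replacing arrival, $-1$ at each token expiration. Hence $\hat A^{**,r}_i(t_0,t_0+r^{p-1})=\big(Y_i^r(t_0+r^{p-1})-Y_i^r(t_0)\big)+\tilde D^r_i(t_0,t_0+r^{p-1})$, and since the last term is $o(r^{2p-1})$ by the previous paragraph it remains to bound the increment of $Y_i^r$. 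Its Dynkin decomposition has compensator with derivative $\lambda_i r\,\bOne_{\{\tilde Y_i^r=0\}}-\mu_0\tilde Y_i^r$, so $Y_i^r(t_0+r^{p-1})-Y_i^r(t_0)=M^r+\lambda_i r\,L^r(t_0)-\mu_0\int_{t_0}^{t_0+r^{p-1}}\tilde Y_i^r\,d\xi$, where $L^r(t_0):=\int_{t_0}^{t_0+r^{p-1}}\bOne_{\{\tilde Y_i^r(\xi)=0\}}\,d\xi$ is the occupation time of state $0$. The drift integral is $o(r^{2p-1})$ as before; the martingale $M^r$ has predictable quadratic variation $\lambda_i r\,L^r(t_0)+\mu_0\int\tilde Y_i^r\,d\xi$, so once $L^r(t_0)=o(r^{2p-2})$ is known, Freedman's inequality together with a union bound over the $O(r^{2-2p})$ windows (using $r^{2p-1}\to\infty$ for $p>1/2$) gives $\sup_{t_0}|M^r|=o(r^{2p-1})$ w.p.1. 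Thus everything reduces to showing $\sup_{[t_0,t_0+r^{p-1}]\subseteq[0,Tr^{1-p}]}L^r(t_0)=o(r^{2p-2})$ w.p.1 along the subsequence.

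I would establish this occupation-time bound via an excursion/quasi-stationary estimate. On the good event the exit rate from state $0$ equals $\mu_i\hat Y_i^r\ge\tfrac12\lambda_i r$, so each visit of $\tilde Y_i^r$ to $0$ lasts a time stochastically dominated by $\mathrm{Exp}(\tfrac12\lambda_i r)$, and hence $L^r(t_0)$ is dominated by $\mathrm{Gamma}\big(V^r(t_0),\tfrac12\lambda_i r\big)$, where $V^r(t_0)$ is the number of visits of $\tilde Y_i^r$ to $0$ during the window. It therefore suffices to bound $V^r(t_0)=o(r^{2p-1})$ uniformly (plus a routine Gamma-concentration step). From the stationary balance $\lambda_i r\,\pr(\tilde Y_i^r(\infty)=0)=\mu_0\,\E[\tilde Y_i^r(\infty)]$ and the bound $\E[\tilde Y_i^r(\infty)]=O(\sqrt r)$ (a consequence of Theorem~\ref{THM:SQRT-R-TIGHTNESS}) one gets $\pr(\tilde Y_i^r(\infty)=0)=O(r^{-1/2})$, whence $\E[V^r(t_0)]=O(r^{\,p-1/2})=o(r^{2p-1})$ and $\E[L^r(t_0)]=O(r^{\,p-3/2})=o(r^{2p-2})$.

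The hard part will be upgrading these expectation estimates to uniform bounds over the $O(r^{2-2p})$ windows: since the total number of visits of $\tilde Y_i^r$ to $0$ over $[0,Tr^{1-p}]$ can itself be of larger order than $r^{2p-1}$, one must show that the visits are ``spread out'' — no single $r^{p-1}$-window captures more than $o(r^{2p-1})$ of them. This is the one genuinely delicate point, and it is also the reason the local-fluid estimate of Theorem~\ref{thm:local-fluid-tightness} does not by itself suffice: that estimate controls $\tilde Y_i^r$ only at the scale $r^{q}$, $q$ slightly above $1/2$, whereas $r^{2p-2}$ is much smaller when $p$ is close to $1/2$. I would handle it by partitioning $[0,Tr^{1-p}]$ into $O(1)$-length blocks on which $\tilde Y_i^r$ mixes, proving a Poisson-type exponential tail for the per-block count of visits to $0$, and using the exponential-moment bound of Theorem~\ref{THM:SQRT-R-TIGHTNESS} to discard the rare blocks on which $\tilde Y_i^r$ stays atypically large; a union bound over blocks then yields the claimed uniform $o(r^{2p-1})$ bound and completes the proof of \eqn{eq-non-typical-arrivals}.
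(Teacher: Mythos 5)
Your treatment of \eqn{eq-non-typical-departures} is essentially the paper's: pass to a subsequence on which Theorem~\ref{thm:local-fluid-tightness} holds with some $q\in(1/2,p)$, note that the clocks of the token-expiration processes advance by $o(r^{q+p-1})=o(r^{2p-1})$ over any window, and invoke the Poisson concentration of Lemma~\ref{lem:unif-conv}; this part is fine. For \eqn{eq-non-typical-arrivals} you diverge sharply. The paper's argument is a one-liner: from the conservation law $Y^r_i(t_0,t_0+r^{p-1})=\hat A^{**,r}_i(t_0,t_0+r^{p-1})-\tilde D^r_i(t_0,t_0+r^{p-1})$ it bounds $\hat A^{**,r}_i$ by $|Y^r_i(t_0,t_0+r^{p-1})|+\tilde D^r_i(t_0,t_0+r^{p-1})$ and controls the first term by $2\sup_t |Y^r_i(t)-\rho_i r|=o(r^q)$, again from Theorem~\ref{thm:local-fluid-tightness}. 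You start from the same conservation law but then decompose the increment of $Y^r_i$ via its compensator and try to control the occupation time $L^r(t_0)$ of $\{\tilde Y^r_i=0\}$ through excursion and quasi-stationary estimates. Your implicit reason for rejecting the direct bound --- that $o(r^q)$ with $q>1/2$ is not $o(r^{2p-1})$ unless $p>3/4$ --- is a fair observation (and it applies equally to the paper's own step), but the alternative you offer does not close.

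The gap is exactly where you flag it. Everything in your route reduces to the uniform almost-sure bound $\sup_{t_0}V^r(t_0)=o(r^{2p-1})$ (equivalently $\sup_{t_0}L^r(t_0)=o(r^{2p-2})$) over all $O(r^{2-2p})$ windows, and what you actually establish is only the single-window expectation $\E[V^r(t_0)]=O(r^{p-1/2})$. The proposed upgrade --- $O(1)$-length blocks, a per-block exponential tail for the number of visits to $0$, and discarding blocks where $\tilde Y^r_i$ is atypically large --- does not deliver what is needed: a block of length $O(1)$ contains $O(r^{1/2})$ visits to $0$ in expectation, while the target per-window bound $o(r^{2p-1})$ is an arbitrarily small power of $r$ when $p$ is near $1/2$; controlling the per-block \emph{total} (which is all a tail bound on the block count gives) says nothing about whether a single $r^{p-1}$-long sub-window can capture $\Theta(r^{2p-1})$ of those visits. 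One would need a genuine anti-clustering estimate for the returns of $\tilde Y^r_i$ to $0$ at time scale $r^{p-1}$, which is neither sketched nor a routine consequence of Theorem~\ref{THM:SQRT-R-TIGHTNESS} or Theorem~\ref{thm:local-fluid-tightness}. As written, then, the proposal proves \eqn{eq-non-typical-departures} but not \eqn{eq-non-typical-arrivals}. The shortest complete route to the latter is the paper's conservation-law step whenever one can take $1/2<q<2p-1$ in Theorem~\ref{thm:local-fluid-tightness} (i.e.\ $p>3/4$); outside that range some version of your occupation-time analysis really is required, and it would have to be carried out in full.
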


\begin{proof} Indeed, by Theorem~\ref{thm:local-fluid-tightness},
we can and do choose a subsequence of $r$ along which 
\eqn{eq-local-fluid-tightness}-\eqn{eq-Z-approx-r} hold w.p.1. Then,
\eqn{eq-non-typical-departures} follows from 
\eqn{eq-local-fluid-tightness}, which states that the number of tokens
$\tilde Y^r_i(t)$ is uniformly $o(r^p)$, and from the construction 
of the token departure processes, with the corresponding driving
processes $\tilde \Pi_{(\bk,\hat \bk), i}$ satisfying 
Lemma~\ref{lem:unif-conv}.
From \eqn{eq-local-fluid-tightness}
we also have the uniform convergence
$$
Y^r_i(t_0,t_0+r^{p-1})/[r^p r^{p-1}]\to 0.
$$
But, this along with \eqn{eq-non-typical-departures} implies uniform convergence
\eqn{eq-non-typical-arrivals} as well, because we have the conservation law
$$
Y^r_i(t_0,t_0+r^{p-1}) = \hat A^{**,r}_i(t_0,t_0+r^{p-1}) - \tilde D^r_i(t_0,t_0+r^{p-1}).
$$
The theorem is then proved.
\end{proof}%\hfill\(\Box\)

\noindent {\bf Proof of Theorem \ref{th-main-res-open}.}
Consider the sequence of the system processes in stationary regime.
Consider a fixed $T>0$, chosen to be sufficiently large, as in 
Proposition~\ref{PROP:CONV-PROB}.
Consider any subsequence of $r$. Then,
we can and do choose a further subsequence of $r$ along which, w.p.1,
\eqn{eq-local-fluid-tightness}-\eqn{eq-Z-approx-r} hold with some $q\in (1/2,p)$
(by Theorem~\ref{thm:local-fluid-tightness}) ,
and the properties stated in Theorem~\ref{thm:non-typical-transitions} hold.
As in the proof of Proposition~\ref{PROP:CONV-PROB}, we will keep track of the evolution 
of the value of $F^r(\bX^r(t))$. We emphasize that this is exactly the same function
$F^r$ as defined in Section~\ref{sec-GSS-definition} and used in 
the analysis of closed system,
namely it has the fixed
parameter $r$ (in the system with index $r$), and {\em not} the random ``parameter'' $Z^r$.
We claim that the following property holds.

{\bf Claim:} {\em  
%Let us pick any fixed $\bx^*\in \cx^*$, and for each $r$ 
%denote $F^r_*=F^r(\bx^*)$. Then,
There exist positive constants $0 < C_1 < C_2$, $\delta>0$, such that the following holds.
For all 
sufficiently large $r$, uniformly on all intervals
$[t_0,t_0+r^{p-1}]\subset [0,Tr^{1-p}]$, we have 
(a) $F^r(\bX^r(t_0)) - r u^* \ge C_1 r^p$ implies 
\[F^r(\bX^r(t_0+r^{p-1})) - F^r(\bX^r(t_0)) \le - \delta r^{2p-1},\]
and (b) $F^r(\bX^r(t_0)) - r u^*  \le C_1 r^p$ implies 
\[\sup_{\xi\in [0,1]} F^r(\bX^r(t_0+\xi r^{p-1})) - r u^*  \le C_2 r^p.\]
}
Clearly, (b) is analogous to Corollary~\ref{cor:del-x}
for the closed system and 
is proved exactly same way, with $\bar \mu$ in \eqn{eq-bounded-rate}
 replaced by $\bar \mu \vee \mu_0$. Statement (a) 
is analogous to Proposition~\ref{prop:loc-decr} for the closed system, 
and we prove it below.
It is also clear that the claim, along with 
\eqn{eq-local-fluid-tightness}-\eqn{eq-Z-approx-r},
implies the theorem statement via the argument almost verbatim repeating
that in the proof of Proposition~\ref{PROP:CONV-PROB}.

It remains to prove (a). The proof is the
same as that of Proposition~\ref{prop:loc-decr}, except that we have to make additional 
estimates accounting for: (i) token departures due to their expiration and
actual customer arrivals that do not find tokens;
(ii) the fact that {\em GSS-M} uses weight function $\bar w^r=\bar w^r(X;Z^r)$, 
as opposed to function $w^r=w^r(X)$ (which has constant $r$ as a parameter,
instead of the random variable $Z^r$).
This is because, {\em if we would have only transitions associated with 
actual customer departures and actual customer arrivals replacing tokens,
and the assignment decisions would be based on weight $w^r$ as opposed to 
$\bar w^r$}, then exactly the same drift estimates as those in the proof of
Proposition~\ref{prop:loc-decr} would apply.
Note that in (i) we consider exactly  those transitions
for which we have properties \eqn{eq-non-typical-departures}-\eqn{eq-non-typical-arrivals}.
Therefore, in any interval $[t_0,t_0+r^{p-1}]$
the ``worst case'' possible increase in $F^r(\bX^r)$ due to such transitions
is $o(r^{2p-1})$. (We omit obvious epsilon/delta formalities.)
Now consider (ii). Since we have the uniform bound $|Z^r(t)-r|\le O(r^q)$,
\iffalse 
COM: MAYBE PUT THIS BACK?
we conclude that, first, $Z^p(t)\le 2r$, which gives us the bound on the total rate 
at which all transitions of any type occur; second, 
\fi
it is easy to check that $|\bar w^r(X) - w^r(X)|\le O(r^{q-1})$ for any $X\ge 0$.
This means that the error in the calculation of first-order contribution
into the change of $F^r(\bX^r)$ in any $[t_0,t_0+r^{p-1}]$, introduced
by {\em GSS-M} using weight $\bar w^r$ instead of $w^r$, is uniformly bounded
by $O(r r^{p-1} r^{q-1})=O(r^{p+q-1})= o(r^{2p-1})$. (Again, we omit epsilon/delta formalities.)
We see that the potential positive contribution of both (i) and (ii) into 
the change of objective function in any interval $[t_0,t_0+r^{p-1}]$
is $o(r^{2p-1})$, uniformly on the choice of the interval.
The estimate in (a) follows. Thus, the proof of the above claim, and 
of the theorem, follows.
% COM: MORE DETAILS?
\hfill\(\Box\)

\section{Discussion}\label{sec-discussion}
We presented the policy {\em Greedy with sublinear Safety Stocks (GSS)} along with a variant, 
which asymptotically minimize the steady-state total number of occupied servers at the fluid scale, 
as the input flow rates grow to infinity. %COM REWRITTEN A LITTLE
A technical novelty of {\em GSS} is that it \emph{automatically} creates non-zero safety stocks,
{\em sublinear in the system ``size''}, at server configurations which have zero stocks on the fluid scale.
It is important to note that the algorithm does it 
without {\em a priori} knowledge of system parameters.
%making every configuration available for customer placement. COM 
To prove the fluid-scale optimality 
of {\em GSS}, we also need to consider a local fluid scaling, 
under which the sublinear safety stocks are ``visible''.
This in turn allows us to obtain a tight asymptotic characterization of the algorithm deviation from 
%error compared to
exact optimal packing.

We can extend {\em GSS} to policies that asymptotically minimize the more general objective $\sum_{\bk} c_{\bk} X_{\bk}$, where $c_{\bk} > 0$ can be interpreted as the ``cost'' (for example, some estimated energy cost) of keeping a server in configuration $\bk$, for each $\bk \in \ck$. 
Instead of the weight function $w^r(X^r_{\bk})$ for each $\bk \in \ck$, 
consider the weight function $c_{\bk} w^r(X^r_{\bk})$, and define $\Delta^r$ 
as the difference between the new weight functions. 
We can then define {\em GSS} and {\em GSS-M} using the new $\Delta^r$. 
They minimize the fluid scale quantity $\sum_{\bk} c_{\bk} x_{\bk}$ asymptotically, 
and similar convergence rates can be obtained. 
If we assume that the cost $c_{\bk}$ is monotonically non-decreasing in $\bk$ 
(i.e., $c_{\bk'} \le c_{\bk}$ if $\bk' \le \bk$), then all our results and proofs still hold essentially verbatim.
If costs $c_{\bk}$ are not monotone in $\bk$, 
most of the statements and proofs easily extend, 
except those of Lemmas \ref{lem:existence-si-1} and \ref{lem:existence-si-2}, 
where some dual variables $\eta_i$ may need to be negative. 
These $\eta_i$ can be defined in a similar fashion as those in the proof 
of Lemma 6 in \cite{St2012}.

There are some possible directions for future research. 
For example, one may expect asymptotic optimality of ``pure'' {\em GSS} in an open system, 
which seems more difficult to establish. 
Proving or disproving its optimality may require 
better understanding of and some new insight into the system dynamics. 
Another direction can be the investigation of policies other (possibly simpler) than {\em GSS}. 
%This is of interest because 
{\em GSS} is {\em asymptotically} optimal as the system scale increases. 
However, if the number $|\ck|$ of feasible configurations is large, the system scale may need to be
very large for the near optimal performance. 
It is then of interest to design policies (e.g., some form of best-fit) 
that have provably good performance properties at a wide range of system scales.

\newpage
\appendix

\section{Proof of Lemma \ref{LEM:LP-RATE}}\label{apdx:lp-rate}
Both $\cx$ and $\cx^*$ are convex and compact polytopes 
with a finite number of extreme points. Let $\cs$ and $\cs^*$ be the set 
of extreme points of $\cx$ and of $\cx^*$, respectively. 
Note that for all $\bx^* \in \cs^*$, $\sum_{\bk} x^*_{\bk} = u^*$, 
and for all $\bx' \in \cs \backslash \cs^*$, $\sum_{\bk} x'_{\bk} > u^*+\delta$. 
for some $\delta > 0$.

Let $\hull{\cs\backslash \cs^*}$ be the convex hull of the set $\cs\backslash \cs^*$. 
Then for all $\bx' \in \hull{\cs\backslash \cs^*}$, $\sum_{\bk} x'_{\bk} \ge u^* + \delta$. 
Consider the function $g : \hull{\cs\backslash \cs^*} \times \cx^* \rightarrow \R$ 
defined by 
$g(\bx^*, \bx') = \|\bx^* - \bx'\|/\left(\sum_{\bk \in \ck} x'_{\bk} - u^*\right)$.
Function $g$ is well-defined, always positive and clearly continuous. 
Since both $\hull{\cs\backslash \cs^*}$ and $\cx^*$ are compact, so is their product space. 
Thus there exists $D > 0$ such that $g$ is upper bounded by $D$. 

For every $\bx \in \cx$, there exists $\lambda \in [0, 1]$ 
such that $\bx = \lambda \bx' + (1-\lambda) \bx^*$, with $\bx' \in \hull{\cs\backslash \cs^*}$ 
and $\bx^* \in \cx^*$. Then 
\begin{eqnarray*}
d\left(\bx, \cx^*\right) &\le& \|\bx - \bx^*\| = \lambda \|\bx' - \bx^*\| \\
& = & \lambda g(\bx^*, \bx') \left(\sum_{\bk \in \ck} x'_{\bk} - u^*\right) \\
& \le & \lambda D\left(\sum_{\bk \in \ck} x'_{\bk} - u^*\right) 
 = D\left(\sum_{\bk \in \ck} x_{\bk} - u^*\right).
\end{eqnarray*}

\section{Proof of Proposition \ref{PROP:CONV-PROB}}\label{apdx:conv-prob}
Let $\delta > 0$ be the same as in Proposition \ref{prop:loc-decr}, 
and define $T = 3/\delta$. 
{$C > 0$ will be chosen to be sufficiently large, 
whose value will be determined later in the proof.}
Clearly, to prove the proposition, it suffices to 
%COM: NO NEED TO CHOOSE A SUBSEQUENCE HERE. EVERYTHING HOLDS FOR THE SEQUENCE ITSELF ENDCOM 
prove a stronger property
%along every subsequence $\{r_m\}$ of $\{r\}$, 
\[
\pr\left(d\left(\bx^{r}(Tr^{1-p}), \cx^*\right) \leq Cr^{p-1} \mbox{ for all large $r$} \right) 
= 1.
\]
%Consider any subsequence of $\{r\}$  (which, with an abuse of notation, we still index by $r$). 
By Proposition \ref{prop:loc-decr}, there exists $C_1 > 0$ 
such that w.p.$1$, 
for sufficiently large $r$, and for any interval 
$[t_0, t_0 + {r}^{p-1}] \subset [0, T{r}^{1-p}]$, 
if $d\left(\bx^{r}(t_0), \cx^*\right) \ge C_1 r^{p-1}$, 
then
\begin{equation}\label{eq:local-drift-F}
F^r\big(\bX^r(t_0^r + r^{p-1})\big) - F^r\big(\bX^r(t_0^r)\big) \leq -\delta r^{2p-1}. 
\end{equation}
We pick some $r$ such that the above statement holds, 
and that furthermore, for every $t_0 \in [0, Tr^{1-p}]$ and $\xi \in [0, 1]$, 
\begin{equation}\label{eq:local-change-X}
d\left(\bX^r(t^r_0 + \xi r^{p-1}), \bX^r(t^r_0) \right) \leq O(r^p).
\end{equation}
This can be done by Corollary \ref{cor:del-x}.

Now claim that $d(\bx^r(Tr^{1-p}), \cx^*) \le C r^{p-1}$.
To establish the claim, 
we consider the set $\cl = \{\ell \in \Z_+: \ell r^{p-1} \in [0, T r^{1-p}]\}$, 
and prove that
\begin{itemize}
\item[(a)] there exists $\ell_0 \in \cl$ 
such that 
$d(\bx^r(\ell_0 r^{p-1}), \cx^*) \le C_1 r^{p-1}$, 
and
\item[(b)] there exists $C_2 > 0$ such that 
for all $\ell \in \cl$ with $\ell \ge \ell_0$, 
$F^r\big(\bX^r(\ell r^{p-1})\big) \le r u^* + C_2 r^p$.
\end{itemize}
First suppose that (a) does not hold. 
Then for every $\ell \in \cl$, $d(\bx^r(\ell r^{p-1}), \cx^*) \ge C_1 r^{p-1}$, so
\[
F^r\big(\bX^r((\ell+1)r^{p-1})\big) - F^r\big(\bX^r(\ell r^{p-1})\big) \leq -\delta r^{2p-1}.
\]
Let $\bar{\ell} = \lceil Tr^{2(1-p)} \rceil$. Summing these inequalities over $\ell$, 
we obtain
\begin{align*}
& F^r\big(\bX^r(\bar{\ell} r^{p-1})\big) - F^r\big(\bX^r(0)\big) \leq - \bar{\ell} \delta r^{2p-1} \\
& \le - (Tr^{2(1-p)}-1)\delta r^{2p-1} = -T\delta r + \delta r^{2p-1}.
\end{align*}
Thus,
\begin{align*}
& F^r\big(\bX^r(\bar{\ell} r^{p-1})\big) \le F^r\big(\bX^r(0)\big) - T\delta r + \delta r^{2p-1} \\
& \le r - \frac{3}{\delta} \delta r + \delta r^{2p-1} < 0.
\end{align*}
This contradicts the nonnegativity of $F^r$, so statement (a) is established. 

To establish statement (b), 
we use the following simple lemma, whose proof is omitted. 
\begin{lem}\label{lem:max-seq}
Let $K, \alpha$ and $\beta$ be given positive constants. 
Consider a sequence of real numbers $\{a_n\}$ that satisfies:
(i) $a_0 \le K$, (ii) $a_{n+1} - a_n \le \alpha$, and 
(iii) if $a_n \ge K$, then $a_{n+1} - a_n \le -\beta$. 
Then $\max_n a_n \le K + \alpha$.
\end{lem}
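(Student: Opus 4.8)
The plan is to prove the slightly more explicit claim that $a_n \le K + \alpha$ for every $n$, which immediately gives $\max_n a_n \le K + \alpha$. I would argue by contradiction via a minimal counterexample. Suppose the bound fails, and let $n^*$ be the smallest index with $a_{n^*} > K + \alpha$. Since $a_0 \le K < K + \alpha$ by hypothesis (i), we have $n^* \ge 1$, and by minimality of $n^*$ we have $a_{n^*-1} \le K + \alpha$.

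Next I would pin down $a_{n^*-1}$ from hypothesis (ii): since $a_{n^*} - a_{n^*-1} \le \alpha$ and $a_{n^*} > K + \alpha$, we get $a_{n^*-1} \ge a_{n^*} - \alpha > K$. In particular $a_{n^*-1} \ge K$, so hypothesis (iii) applies at index $n^*-1$ and yields $a_{n^*} - a_{n^*-1} \le -\beta < 0$. Hence $a_{n^*} < a_{n^*-1} \le K + \alpha$, contradicting the defining property of $n^*$. Therefore no counterexample exists, and the lemma follows.

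The argument is entirely elementary and there is no real obstacle beyond the case bookkeeping; the one substantive observation is that any step which raises $a_n$ above $K$ is immediately followed by a strict decrease of at least $\beta$, so the sequence can never overshoot $K$ by more than a single increment of size $\alpha$. One could alternatively phrase this as a direct induction: if $a_n \le K$ then $a_{n+1} \le a_n + \alpha \le K + \alpha$, while if $a_n > K$ then $a_{n+1} \le a_n - \beta < a_n \le K + \alpha$ using the inductive hypothesis; the minimal-counterexample version merely packages this more compactly, and I would present it that way.
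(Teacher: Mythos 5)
Your argument is correct, and since the paper explicitly omits the proof of this lemma as "simple," your minimal-counterexample (equivalently, inductive) argument is exactly the intended elementary reasoning: any term exceeding $K+\alpha$ would force its predecessor above $K$ by (ii), whence (iii) forces a decrease, a contradiction. Nothing is missing.
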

We will establish the following corresponding statements:

(i) $F^r \left(\bX^r(\ell_0 r^{p-1})\right) \le r u^* + C_1 r^p$. 
Recall that we have $d\left(\bx^r(\ell_0 r^{p-1}), \cx^*\right) \le C_1 r^{p-1}$, 
so by Lemma \ref{lem:F-sum-close}, 
\begin{align*}
& F^r \left(\bX^r(\ell_0 r^{p-1})\right) - ru^* \le \sum_{\bk \in \ck} X^r_{\bk} (\ell_0 r^{p-1}) - r u^* \\
& \le r d\left(\bx^r(\ell_0 r^{p-1}), \cx^*\right) \le C_1 r^p.
\end{align*}

(ii) There exists $C_3 > 0$ such that 
$F^r \left(\bX^r((\ell+1) r^{p-1})\right) - F^r \left(\bX^r(\ell r^{p-1})\right) 
\le C_3 r^p$. This is clear, since by Lemma \ref{lem:F-sum-close}, 
$F^r\left(\bX^r\right)$ differs from $\sum_{\bk} X^r_{\bk}$ by $O(r^p)$, 
and the change in $\bX^r$ is at most $O(r^p)$ over an interval of length $r^{1-p}$. 

(iii) If $F^r \left(\bX^r(\ell r^{p-1})\right) \ge r u^* + C_1 r^p$, 
then 
\[F^r\big(\bX^r((\ell+1)r^{p-1})\big) - F^r\big(\bX^r(\ell r^{p-1})\big) \leq -\delta r^{2p-1}.\]
To see this, suppose that $F^r \left(\bX^r(\ell r^{p-1})\right) \ge r u^* + C_1 r^p$. Then
$d\left(\bx^r(\ell r^{p-1}), \cx^*\right) \ge \sum_{\bk \in \ck} x^r_{\bk} - u^* 
 \ge \frac{1}{r} F^r \left(\bX^r(\ell r^{p-1})\right) - u^* \ge C_1 r^{p-1}$, 
and we must have 
\[
F^r\big(\bX^r((\ell+1)r^{p-1})\big) - F^r\big(\bX^r(\ell r^{p-1})\big) \leq -\delta r^{2p-1}.
\]

By Lemma \ref{lem:max-seq}, for all $\ell \in \cl$ with $\ell \ge \ell_0$, we have 
\[
F^r\big(\bX^r(\ell r^{p-1})\big) \le r u^* + \left(C_1 + C_3\right) r^p = r u^* + C_2 r^p,
\]
by letting $C_2 = C_1 + C_3$.
This establishes statement (b). In particular, for $\bar{\ell} = \lceil T r^{2(1-p)}\rceil$, 
\[
F^r\big(\bX^r(\bar{\ell} r^{p-1})\big) \le r u^* + C_2 r^p.
\]
Now by \eqref{eq:local-change-X}, the difference between 
$\bX^r(T r^{1-p})$ and $\bX^r(\bar{\ell}r^{p-1})$ is $O(r^p)$. 
Furthermore, the difference between $F^r\big(\bX^r(\bar{\ell} r^{p-1})\big)$ 
and $\bX^r(\bar{\ell}r^{p-1})$ also $O(r^p)$. This implies that 
\[
 \sum_{\bk \in \ck} X^r_{\bk}(T r^{1-p}) - r u^* \le C_2 r^{p} + O(r^p).
\]
Thus, there exists $C > 0$ %COM: WE CHOOSE $C$  HERE, RIGHT? NOT $C_1$. ENDCOM
such that 
\[
 \sum_{\bk \in \ck} x^r_{\bk}(T r^{1-p}) - u^* \le  \frac{C}{D} r^{p-1}.
\]
By Lemma \ref{LEM:LP-RATE}, 
\begin{align*}
& ~d(\bx^r(T r^{1-p}), \cx^*) \le D\left(\sum_{\bk \in \ck} x^r_{\bk}(T r^{1-p}) - u^*\right) \\
\le & ~D \cdot \frac{C}{D} r^{1-p} = C r^{1-p},
\end{align*}
and we have established the claim.
Therefore, w.p.$1$, 
\[
d(\bx^{r}(T r^{1-p}), \cx^*) \le Cr^{1-p},
\]
for all sufficiently large $r$.
This establishes the proposition.

\section{Proof of Theorem \ref{THM:SQRT-R-TIGHTNESS}}\label{apdx:sqrt-r-tightness}
The general approach of the proof is similar to that of Theorem 2 (ii) in \cite{GS2012}, 
in that it is based on the process generator estimates for the exponent $e^{\Phi}$, where $\Phi$ is
a function on the state space. However, the function $\Phi$ in our case is much different, and so are the specifics
of the estimates.
Consider fixed $i \in \ci$ and $r$. 
For notational convenience, we drop the subscript $i$ and superscript $r$ 
from all quantities considered in this proof. 
The Markov chain $\bU(\cdot) = (\hat Y(\cdot), \tilde Y(\cdot))$ has infinitesimal 
transition rate matrix $\bxi$ given by 
\[
\xi(\bu, \bu + \bv) \rightarrow \left\{ \begin{array}{ll}
\lambda r, & \mbox{ if } \bv = (1, -1\cdot\bOne_{\{\tilde y > 0\}}),\\
\mu \hat{y}, & \mbox{ if } \bv = (-1, 1), \\
\mu_0 \tilde{y}, & \mbox{ if } \bv = (0, -1), \\
0, & \mbox{ otherwise,}
\end{array}\right.
\]
where $\bu = (\hat y, \tilde y)$. We consider $A$ the infinitesimal generator 
of the Markov chain $\bU(\cdot)$, defined by 
\begin{equation}\label{eq:generator-def}
A G(\bu) = \sum_{\bu'} \xi(\bu, \bu')\left(G(\bu') - G(\bu)\right),
\end{equation}
for all functions $G : \Z_+^2 \to \R$ in the domain of $A$. 
We also consider the formal operator $\bar{A}$, 
defined (similar to Eq. \eqref{eq:generator-def}) by 
\begin{equation}\label{eq:for-gen-def}
\bar{A} G(\bu) = \sum_{\bu'} \xi(\bu, \bu')\left(G(\bu') - G(\bu)\right),
\end{equation}
for all functions $G: \Z_+^2 \to \R$.
%so that $\bar{A}$ and $A$ coincide on the domain of $A$.
%COM THIS PART IS EDITED SOMEWHAT 
Similarly to
%the proof of Theorem 2 (ii) in 
\cite{GS2012}, it is easy to observe that %our process is such that 
the following property holds:
if a function $G$ takes a fixed constant value on the entire state space, except maybe a finite subset, then
$G$ is within the domain of $A$, 
$A G = \bar A G$, and moreover
\beql{eq-generators}
\E[AG(\bU)] = \E[\bar AG(\bU)] = 0,
\end{equation}
where the expectation is taken w.r.t 
the stationary distribution of the Markov chain $\bU(\cdot)$.  
\iffalse
we choose an appropriate Lyapunov function $G$, 
and use the balance equation
\[
\E[AG(\bU)] = 0
\]
to derive the desired bounds. Here the expectation is take w.r.t 
the stationary distribution of the Markov chain $\bU(\cdot)$.  
\fi

First, define the (candidate) Lyapunov function $G : \Z_+^2 \to \R$ 
by 
\[
G(\bu) = \exp \left(\frac{1}{\sqrt{r}}h(\bu)\right),
\]
where $h(\bu) = \sqrt{(\hat y - \rho r)^2 + \frac{\mu_0}{\mu} \tilde y^2}$.
Note that, for an arbitrary $b\ge 0$, the truncated function 
$$
\bar G^{(b)}(\bu) = \exp \left(\frac{h(\bu)}{\sqrt{r}} \wedge b \right)
$$
is constant outside a finite subset and therefore, by \eqn{eq-generators}, 
\beql{eq-generators2}
\E[\bar AG^{(b)}(\bU)] = 0.
\end{equation}
Also note that, 
$$
\bar{A}G^{(b)}(\bu) \le \bar{A}G(\bu), ~~\mbox{if}~h(\bu)/\sqrt{r} \le b,
$$
$$
\bar{A}G^{(b)}(\bu) \le 0, ~~\mbox{if}~h(\bu)/\sqrt{r} \ge b.
$$
\iffalse
Note that $G$ as defined can be unbounded, so may not be in the domain of $A$. 
However, similar to the proof of Theorem 2 (ii) in \cite{GS2012}, 
it suffices to consider the formal equality $\E[\bar{A}G(\bU)] = 0$ 
and derive the desired bounds. 
(Consider the ``truncated'' version $\bar{G}^{c_1} = G \wedge {c_1}$, 
for any $c_1 > 0$, then using $\E[A\bar{G}^{c_1}(\bU)] = 0$, 
we can derive an upper bound on $\E[\bar{G}^{c_1}(\bU)]$, independent from 
$c_1 > 0$. Then use the monotone convergence theorem 
to get the same bound on $\E[\bar{G}(\bU)]$.)
\fi

Similar to \cite{GS2012}, the following inequality can be derived, 
using Taylor expansion. 
There exists some constant $c_2 > 0$ such that for sufficiently large $r$, 
\begin{equation}\label{eq:drift-G-1}
\bar{A}G(\bu) \le G(\bu)\left(\frac{1}{\sqrt{r}}\bar{A}h(\bu) + \frac{c_2}{r}(\lambda r + \mu \hat y + \mu_0 \tilde y)\right).
\end{equation}
The term $\frac{G(\bu)}{\sqrt{r}}\bar{A}h(\bu)$ captures the first-order change in $G(\bu)$, 
and $\frac{c_2 G(\bu)}{r}(\lambda r + \mu \hat y + \mu_0 \tilde y)$ bounds the second-order change. 
Here we used the fact that $h$ is Lipschitz continuous and $\|\bu\|$ is changed by at most $1$ by any single transition.
Now consider the term $\bar{A}h(\bu)$. 
We use the following inequality to bound $\bar{A}h(\bu)$:
\[
\sqrt{(x+a)^2 + (y+b)^2} - \sqrt{x^2 + y^2} \leq \frac{ax + by + a^2 + b^2}{\sqrt{x^2 + y^2}}.
\]
To verify this inequality, note that first, 
\[
\left(\sqrt{(x+a)^2 + (y+b)^2}\right)^2 \leq 
\left(\sqrt{x^2 + y^2} + \frac{ax + by + a^2 + b^2}{\sqrt{x^2 + y^2}}\right)^2,
\]
and second, 
\[
\sqrt{x^2 + y^2} + \frac{ax + by + a^2 + b^2}{\sqrt{x^2 + y^2}} \geq 0.
\]
Thus, 
\begin{align}
\bar{A}h(\bu) \le & 
~\frac{(\lambda r - \mu \hat{y})(\hat{y} - \rho r) - (\lambda r - \mu \hat{y} + \mu_0 \tilde y)(\mu_0\tilde{y}/\mu)}{\sqrt{(\hat{y} - \rho r)^2 + \mu_0 \tilde{y}^2/\mu}} \nonumber \\
 & ~+ \frac{c_3(\lambda r + \mu \hat y + \mu_0 \tilde y)}{\sqrt{(\hat{y} - \rho r)^2 + \mu_0 \tilde{y}^2/\mu}} \nonumber \\
= & ~\frac{-\frac{\mu}{2}(\hat y - \rho r)^2 - \frac{\mu_0^2}{2\mu}\tilde{y}^2 - \frac{\mu}{2}(\hat y - \rho r + \tilde y)^2}{\sqrt{(\hat{y} - \rho r)^2 + \mu_0 \tilde{y}^2/\mu}} \nonumber \\
& ~+ \frac{c_3(\lambda r + \mu \hat y + \mu_0 \tilde y)}{\sqrt{(\hat{y} - \rho r)^2 + \mu_0 \tilde{y}^2/\mu}} \nonumber \\
\le & ~\frac{-\frac{\mu}{2}(\hat y - \rho r)^2 - \frac{\mu_0^2}{2\mu}\tilde{y}^2}{h(\bu)}
 + \frac{c_3(\lambda r + \mu \hat y + \mu_0 \tilde y)}{h(\bu)} \nonumber \\
\le & ~-c_4h(\bu) + \frac{c_3}{\sqrt{r}}(\lambda r + \mu \hat y + \mu_0 \tilde y), \label{eq:drift-G-2}
\end{align}
for  some positive constants $c_3$ and $c_4$, and when $h(\bu) \ge \sqrt{r}$. 
Combining Inequalities \eqref{eq:drift-G-1} and \eqref{eq:drift-G-2}, we have
\[
\bar{A}G(\bu) \le G(\bu)\left(-\frac{c_4}{\sqrt{r}} h(\bu) + \frac{c_2 + c_3}{r}(\lambda r + \mu \hat y + \mu_0 \tilde y)\right).
\]
Consider the term in the bracket on the RHS. It is now an elementary calculation to see that there exists some positive constant $c_5$, 
such that whenever $h(\bu) \ge c_5 \sqrt{r}$, 
\[
-\frac{c_4}{\sqrt{r}} h(\bu) + \frac{c_2 + c_3}{r}(\lambda r + \mu \hat y + \mu_0 \tilde y) \le -1.
\]
Also note that when $h(\bu) < c_5 \sqrt{r}$, 
the maximum values of 
\[
G(\bu)~~~\mbox{ and }~~~G(\bu)\left(-\frac{c_4}{\sqrt{r}} h(\bu) + \frac{c_2 + c_3}{r}(\lambda r + \mu \hat y + \mu_0 \tilde y)\right)
\]
are both bounded above by an absolute constant, say $c_6$, 
which does not depend on $r$. In summary, 
\begin{align*}
& \bar{A}G(\bu) \le -G(\bu) ~~\mbox{ whenever } ~~h(\bu) \ge c_5 \sqrt{r}, \\
\mbox{and }~~& \bar{A}G(\bu) \le c_6 ~~\mbox{ whenever } ~~h(\bu) < c_5 \sqrt{r}.
\end{align*}
Thus, %COM: THE REST IS ADJUSTED 
for any $b>c_5$,
\begin{eqnarray*}
0 = \E[AG^{(b)}(\bU)] & \le & \E[\bar AG(\bU) \bOne_{\{c_5 \sqrt{r} \le h(\bU) \le b \sqrt{r}\}}] \\
& & + \E[\bar AG(\bU) \bOne_{\{h(\bU) < c_5 \sqrt{r}\}}] \\
& \le & - \E[G(\bU)\bOne_{\{c_5 \sqrt{r} \le h(\bU) \le b \sqrt{r}\}}] + c_6.
\end{eqnarray*}
This implies that $\E[G(\bU)\bOne_{\{c_5 \sqrt{r} \le h(\bU) \le b \sqrt{r}\}}] \le c_6$, and 
then $\E[G(\bU)\bOne_{\{ h(\bU) \le b \sqrt{r}\}}] \le 2 c_6$.
Finally, by Monotone Convergence, $\E[G(\bU)] \le 2c_6$. This completes the proof.

\iffalse

\newpage
\appendix
\begin{center}
{\Large\textbf{Appendix}}
\end{center}

\section{Proof of Lemma~\ref{lem-bound-tail}}
\label{prf-bound-tail}

\fi

\iffalse
\noindent
%Alexander L. Stolyar \\
Bell Labs, Alcatel-Lucent \\
600 Mountain Avenue, 2C-322 \\
Murray Hill, NJ 07974, USA \\
stolyar@research.bell-labs.com
\fi

\end{document}